\newenvironment{highlight}{\color{magenta}}{\ignorespacesafterend}
\newtheorem{theorem}{Theorem}[section]
\newtheorem*{theorem*}{Theorem}
\newtheorem{lemma}[theorem]{Lemma}
\newtheorem*{lemma*}{Lemma}
\newtheorem{proposition}[theorem]{Proposition}
\newtheorem*{proposition*}{Proposition}
\newtheorem{corollary}[theorem]{Corollary}
\newtheorem*{corollary*}{Corollary}
\newtheorem*{prop*}{Proposition}
\theoremstyle{definition} 
\newtheorem{definition}[theorem]{Definition}
\newtheorem*{warning*}{Warning}
\newtheorem*{example*}{Example}
\newtheorem{example}{Example}
\newtheorem*{conjecture*}{Conjecture}
\newtheorem*{question*}{Question}
\newtheorem*{questions*}{Questions}
\theoremstyle{remark} 
\newtheorem{remark}{Remark}
\newtheorem*{remark*}{Remark}
\newcommand{\NN}{\mathbb Z_{\geq 0}}
\newcommand{\NNpos}{\mathbb Z_{>0}}
\newcommand{\ZZ}{\mathbb Z}
\newcommand{\QQ}{\mathbb Q}
\newcommand{\RR}{\mathbb R}
\newcommand{\CC}{\mathbb C}
\renewcommand{\AA}{\mathbb A} 
\newcommand{\PP}{\mathbb P}
\newcommand{\FF}{\mathbb F}
\newcommand{\Hom}{\operatorname{Hom}} 
\newcommand{\tik}{\[\begin{tikzcd}}
\newcommand{\zcd}{\end{tikzcd}\]}
\newcolumntype{L}[1]{>{\raggedright\let\newline\\\arraybackslash\hspace{0pt}}m{#1}}
\newcolumntype{C}[1]{>{\centering\let\newline\\\arraybackslash\hspace{0pt}}m{#1}}
\newcolumntype{R}[1]{>{\raggedleft\let\newline\\\arraybackslash\hspace{0pt}}m{#1}}
\newcommand{\defn}[1]{\textsf{#1}}
\newcommand{\Mod}[1]{\ (\mathrm{mod}\ #1)}
\definecolor{cl1}{HTML}{ee82ee} 
\definecolor{cl2}{HTML}{4b0082} 
\definecolor{cl3}{HTML}{0000ff} 
\definecolor{cl4}{HTML}{008000} 
\definecolor{cl5}{HTML}{ffff00} 
\definecolor{cl6}{HTML}{ffa500} 
\definecolor{cl7}{HTML}{ff0000} 
\definecolor{cl8}{HTML}{ff00a2} 
\definecolor{mydarkerblue}{HTML}{3B87C4} 
\newcommand{\vertrowspace}{.6em}
\renewcommand{\arraystretch}{1.15}
\newcommand{\Irr}{\operatorname{Irr}}
\newcommand{\PT}[1]{T_{#1}} 
\newcommand{\IT}[1]{I_{#1}} 
\newcommand{\PS}{\mathsf{P\Lambda}} 
\newcommand{\wt}{\mathrm{deg}} 
\newcommand{\arr}{a}
\newcommand{\arrsqf}{e}
\newcommand{\len}[1]{\mathrm{len}{#1}} 
\newcommand{\IPG}{\Sigma} 
\newcommand{\Rcc}[1]{[\![#1]\!]} 
\newcommand{\ZZgeq}{\ZZ_{ \geq 0}} 
\newcommand{\Conf}{\mathrm{Conf}} 
\newcommand{\GrLR}{\mathsf{GrLaRing}} 
\newcommand{\SymProd}[2]{S^{#2}{#1}} 
\newcommand{\Var}[1]{\mathrm{Var}_{#1}} 
\newcommand{\KV}[1]{K_0(\Var{#1})} 
\newcommand{\RC}[1]{C(\Var{#1})} 
\def\multiset#1#2{\ensuremath{\left(\kern-.3em\left(\genfrac{}{}{0pt}{}{#1}{#2}\right)\kern-.3em\right)}}
\newcommand{\Ep}[1]{{E}_{#1}} 
\newcommand{\Epf}[2]{E_{#2}^{\vec{#1}}} 
\newcommand{\Uc}{\mathcal U}
\newcommand{\Sc}{\mathcal X}
\DeclareMathOperator{\Exp}{Exp}
\newcommand{\mm}{\xi} 
\newcommand{\papertitle}{\Large{Polysymmetric functions and motivic measures of\\[0.4em] configuration spaces}}
\newcommand{\shortpapertitle}{{POLYSYMMETRIC FUNCTIONS AND CONFIGURATION SPACES}}
\title[\shortpapertitle]{\papertitle}
\author{Asvin G}
\address{
\parbox{0.5\linewidth}{
    Department of Mathematics\\
    University of Wisconsin--Madison\\
    }
}
\email{gasvinseeker94@gmail.com}
\author{Andrew O'Desky}
\address{
\parbox{0.5\linewidth}{
    Department of Mathematics\\
    Princeton University\\
    }
}
\email{andy.odesky@gmail.com}
\thanks{A.O. was partially supported by the NSF under Award~No.~DMS-2103361.}
\date{\today}
\begin{document}

\begin{abstract} 
We introduce a generalization of symmetric functions 
    and apply the resulting theory 
    to compute the class 
    in the Grothendieck ring of varieties 
    of the space of geometrically irreducible hypersurfaces 
    of a fixed degree in projective space. 
\end{abstract} 

\subjclass[2020]{Primary 05E05, 55R80; Secondary 55S15, 19L20.}

\maketitle

\section{Introduction} 
Over the last few decades, there have been many fruitful connections 
    between combinatorics and algebraic geometry. 
Some recent highlights of applying 
    algebraic geometry to combinatorics are 
    \cite{adiprasito2018hodge}, \cite{braden2020singular} 
    and \cite{elias2020hodge}. 
A different strand of research, applying combinatorics to algebraic geometry, 
    begins with Grothendieck and Atiyah's definition of a \emph{lambda ring}, 
    axiomatizing the symmetric and alternating tensor power operations on 
    the Grothendieck ring of vector bundles. 
In this paper, we encounter 
    a new application of combinatorics to algebraic geometry 
    extending this strand of research. 

Let $\Lambda$ denote the ring of symmetric functions, 
and let $\Lambda_{(d)}$ 
be a copy of $\Lambda$ whose grading 
has been multiplicatively scaled by $d$. 
The aim of this paper is to introduce and give some applications 
    of the graded $\lambda$-ring 
    with one free generator in each positive degree, 
\begin{equation} 
\PS = 
\Lambda_{(1)}
\otimes \Lambda_{(2)}
\otimes \Lambda_{(3)} \otimes \cdots, 
\end{equation} 
    which we call the ring of \defn{polysymmetric functions}. 
This ring is closely connected with the combinatorics 
    of splitting types and extends much of 
    the theory of symmetric functions. 

\subsection{Arrangement numbers} 

The ring $\PS$ has two distinguished bases which serve 
    a foundational role in what follows. 
Let $x_{\ast \ast}= \{x_{dj}\}_{d,j = 1}^\infty$ 
be a set of indeterminates where $x_{dj}$ has degree $d$ 
and identify $\Lambda_{(d)}$ with 
    symmetric functions in the variables 
    $x_{d \ast}$. 
We say the monomial $x_{d_1j_1}^{m_1}\cdots x_{d_rj_r}^{m_r}$ 
with $m_1,\ldots,m_r$ positive and $r$ minimal has 
    type $\tau=\vec{d}^{\vec{m}}=
    (d_1^{m_1} d_2^{m_2} \cdots d_r^{m_r})$ 
    (regarded as an unordered multiset of tuples $(d_k,m_k)$). 
The \defn{monomial polysymmetric function $M_\tau$ of type $\tau$} 
is the sum 
over all monomials of type $\tau$. 
Next we set 
\begin{equation} 
H_d = 
\sum_{\substack{\text{monomials $f$}\\\text{of degree $d$}} } 
   f .
\end{equation} 
The \defn{complete homogeneous polysymmetric function $H_\tau$ of type $\tau$} is 
\begin{equation} 
H_{\tau} = 
\prod_{d^m \in \tau} H_{d}(x_{\ast\ast}^m).
\end{equation} 
We will see that 
\begin{equation} 
    \PS = \bigoplus_\tau \ZZ M_\tau,  \qquad
    \PS  \otimes \QQ=  \bigoplus_\tau \QQ H_\tau 
\end{equation} 
and construct a partial order on types such that 
\begin{equation}
    H_\lambda = \sum_{\tau } \arr_{\tau\lambda} M_\tau
    = \sum_{\tau \leq \lambda } \arr_{\tau\lambda} M_\tau
    \qquad (\arr_{\tau\lambda} \in \ZZ_{ \geq 0}).
\end{equation} 
The integer $\arr_{\tau\lambda}$ for splitting types 
    $\tau=\vec{d}^{\vec{m}}$ and $\lambda=\vec{c}^{\vec{n}}$ 
    has a simple combinatorial interpretation 
    as the number of $\ZZgeq$-matrices $A$ satisfying 
\begin{equation}
A\vec{n} = \vec{m} \quad\text{and}\quad A^T \vec{d} = \vec{c}. 
\end{equation}
We prove the following identity satisfied by arrangement numbers
\begin{equation} 
    a_{\tau \lambda} = a_{\lambda^t\tau^t}
\end{equation} 
where $\tau^t=(\vec{d}^{\vec{m}})^t= \vec{m}^{\vec{d}}$. 
We show in \S\ref{sec:polysymmetricconstructions} 
    that this involution on types 
    clarifies some classical identities 
    for transition matrices between bases of symmetric functions.


\subsection{Coefficients of the plethystic logarithm}\label{sec:logarithm} 

The $H$ and $M$ bases of $\PS$ arise naturally as 
    the coefficients of the plethystic exponential and logarithm. 
Following \cite{mozgovoy2019}, 
a \emph{plethystic exponential} on a commutative ring $R$ 
is a group isomorphism 
\begin{equation} 
    \Exp \colon (tR[\![t]\!],+) \to (1+tR[\![t]\!],\times)
\end{equation} 
satisfying the following properties 
\begin{enumerate} 
    \item $\Exp(t) = (1-t)^{-1}$, 
    \item $\Exp(at) = 1+at + O(t^2)$, 
    \item $\Exp(f(t^n)) = \Exp(f(t))|_{t \mapsto t^n}$, 
    \item for every $k \geq 0$ there exists $n \geq 0$ 
        such that the $k$-jet of $\Exp(f)$ is determined 
        by the $n$-jet of $f$. 
\end{enumerate} 

In order to study plethystic exponentials in 
    a universal setting 
    one may proceed as follows. 
To compute $\Exp(ut)$ for some element $u$ of 
    a $\lambda$-ring $R$, 
    it suffices to compute it once and for all 
    by taking $u = h_1$ in the $\lambda$-ring $\Lambda$. 
The ring $\Lambda$ is the free $\lambda$-ring on 
    one generator $h_1$, 
    meaning that for any element $u$ of 
    any special $\lambda$-ring $R$ 
    there is a unique $\lambda$-ring homomorphism 
    $$I_u \colon \Lambda \to R$$ 
    sending $h_1$ to $u$. 
The universal expression for $\Exp(ut)$ is given by 
\begin{equation}\label{eqn:FundamentalIdentityPlethysm} 
    \Exp(h_1t)=
    1+h_1t+h_2t^2+h_3t^3+\cdots
\end{equation} 
where $h_1,h_2,\ldots$ are 
the complete homogeneous symmetric functions, 
    in the sense that $\Exp(ut)$ 
    can be evaluated by applying $I_u$ 
    to the right-hand side. 
It is natural to ask for an analogous identity 
    for a \emph{general} element 
    $u_1t+u_2t^2+\cdots\in tR[\![t]\!]$ 
    in place of $ut$. 
In order to be universal, 
    each $u_d$ 
    should generate an independent copy of $\Lambda$ 
    in some larger universal $\lambda$-ring. 
The minimal such $\lambda$-ring is $\PS$ 
    which is free on the generators 
    $M_d=h_1 \in 
    \Lambda_{(d)} \subset \PS$, 
    and we may regard the expansion of 
\begin{equation} 
    \Exp(M_1 t + M_2 t^2+ 
    M_3 t^3+\cdots) \in \PS[\![t]\!]
\end{equation} 
as the \emph{universal plethystic exponential} 
    in the same sense that 
    $$\exp(t) = 1+t+\tfrac12 t^2+\cdots \in \QQ[\![t]\!]$$ 
    is the universal exponential function. 
We will show that the following identity holds:
\begin{equation} 
    \mathrm{Exp}(M_1t+M_2t^2+M_3t^3+\cdots)=
    1+H_1t+H_2t^2+H_3t^3+\cdots.
\end{equation} 
In other words, 
    the $d$th coefficient of the universal plethystic exponential 
    is the $d$th complete homogeneous polysymmetric function $H_d$, 
    and the $d$th coefficient of the universal plethystic logarithm 
    is the $d$th monomial polysymmetric function 
    $M_d$. 
    
We use this to give a combinatorial interpretation 
    of a formula of Getzler--Kapranov. 
When $R$ is a special $\lambda$-ring, 
Getzler and Kapranov prove a formula 
    \cite[Prop.~8.6]{modularoperads} 
    for the plethystic logarithm 
\begin{align} 
    \mathrm{Log}=\Exp^{-1}\colon (1+tR[\![t]\!],\times) &\to (tR[\![t]\!],+)\\
    1+x_1t+x_2t^2+\cdots&\mapsto u_1t+u_2t^2+\cdots.
\end{align} 
For a partition $\lambda = (1^{n_1}\cdots m^{n_m}) \vdash m$ 
set $x^\lambda=x_1^{n_1} \cdots x_m^{n_m}$ 
and $\ell = n_1+\cdots+n_m$. 
Their formula shows that the $d$th coefficient of 
$\mathrm{Log}$ is 
\begin{equation}\label{eqn:fdck} 
    u_d=
\sum_{d=km}
\sum_{\lambda \vdash m}
    (-1)^{\ell-1}\frac{\mu(k)}{k\ell}
    \binom{\ell}{n_1,\ldots,n_m}
\psi_{k}
x^\lambda
\end{equation} 
where $\psi_k$ is the $k$th Adams operation. 
We prove that 
    these coefficients are transition matrix coefficients 
    for the change of basis $M \to H$ in $\PS$. 
Let us call a type $\tau$ \defn{$k$-pure} 
if $b^c \in \tau \implies c = k$, 
and \defn{mixed} otherwise. 
There is a bijection between $k$-pure types of degree $d$ 
    and partitions of degree $m=d/k$ given by 
$\tau = \lambda^k= (b_1^k \cdots b_r^k) \leftrightarrow 
    \lambda : b_1\geq \cdots \geq b_r$. 

\begin{theorem}\label{thm:FormulaForCoefficients}
Let $\arr^{-1}$ be the inverse of 
    the arrangement numbers $a$ in the incidence algebra 
    on types of degree $d$. 
Then 
\begin{enumerate} 
\item $\arr^{-1}_{\tau d}$ vanishes if $\tau$ is mixed. 
\item If $\tau = \lambda^k$ is $k$-pure, then 
\begin{equation}\label{eqn:invarrformula} 
\arr^{-1}_{\tau d} = 
    (-1)^{\ell-1}\frac{\mu(k)}{k\ell}
    \binom{\ell}{n_1,\ldots,n_m}
\end{equation} 
where $\lambda = (1^{n_1}\cdots m^{n_m}) \vdash m=d/k$. 
\end{enumerate} 
\end{theorem}

In the context of configuration spaces, 
    $H_\tau$ (resp.~$M_\tau$) 
    corresponds to the closed $\tau$ stratum 
    (resp.~open $\tau$ stratum). 
The identity \eqref{eqn:fdck} is limited to 
$H_d$ and $M_d$, corresponding to 
the \emph{maximal} type $\tau=d=d^1$ of degree $d$. 
This theorem incorporates \eqref{eqn:fdck} 
    into the framework of inclusion-exclusion 
    with respect to the poset of types, 
making it possible to  
compute motivic measures 
of {general} strata. 

\begin{remark}[A construction of Specht] 
Let $G$ be a finite group and let $R(G_n)$ be 
    the representation ring of $G_n =G \wr \Sigma_n$ 
    where $\Sigma_n$ is the symmetric group on $n$ letters. 
Induction along 
$G_m\times G_n \subset G_{m+n}$ 
    makes $R(G) = \oplus_{n} R(G_n)$ into a commutative ring 
    where $uv = \mathrm{ind}_{G_m\times G_n}^{G_{m+n}}(u \times v)$, 
    and Specht constructed a ring isomorphism 
    \cite[Ch.~I, Appendix~B]{MR3443860} 
\begin{equation} 
    R(G) \xrightarrow{\sim} 
    \bigotimes_{c \in Conj(G)} \Lambda.
\end{equation} 
For polysymmetric functions, there is an isomorphism 
    $R(\mathrm{SL}_2) \xrightarrow{\sim}\PS$. 
\end{remark} 


\subsection{Cohomology of the space of irreducible hypersurfaces} 

A motivating example for the geometric applications of 
the ring $\PS$ is the following problem.
Let 
    $\Irr_{d}=\Irr_{d,n}$ 
    denote the variety 
    of geometrically irreducible hypersurfaces 
    of degree $d$ in projective $n$-space. 
Recent work of Chen~\cite{chen} 
    shows that the cohomology of $\Irr_{d}$ 
    stabilizes as $d$ or $n$ 
    goes to infinity. 
Chen has shown that for any cohomological degree 
$k$ in the stable range 
$2(\dim \Irr_{d,n}-\dim \Irr_{d,n-1}+n)
    \leq k \leq 
    2\dim \Irr_{d,n}$, 
    the $k$th compact cohomology group is 
\begin{equation} 
    H^{k}_c(\Irr_{d}(\CC),\QQ) =
    \begin{cases}
        \QQ&\text{$k$ even,}\\
        0&\text{$k$ odd.}
    \end{cases}
\end{equation} 
Unfortunately this only accounts for a vanishing proportion 
    of cohomological degrees 
    as $d \to \infty$. 

In a different direction, 
    the compact Euler characteristic of 
    the related space $\Irr_{d}^\RR$ of 
    $\RR$-irreducible degree $d$ real hypersurfaces 
    was computed by Hyde~\cite{hyde2020euler}. 
Hyde showed that  
\begin{equation} 
    \chi(\Irr_{d}^\RR) = 
    \begin{cases}
        b_k& \text{if $d=2^k$,}\\
        0&\text{otherwise.}
    \end{cases}
\end{equation} 
where $b_k \in \{0,1,-1\}$ is defined by the following rule:  
any positive integer $n$ may be uniquely expressed as 
    an alternating sum of descending powers of $2$, 
$$
n = 2^{k_{2m}}-2^{k_{2m-1}}+2^{k_{2m-2}}-\cdots+2^{k_2}-2^{k_1},
$$
for integers $k_{2m} > \cdots > k_1  \geq 0$ 
and $b_k$ is the coefficient of $2^k$ 
in this expression 
if $k \in \{k_1,\ldots,k_{2m}\}$ 
or $b_k = 0$ otherwise. 
This formula suggests the cohomology of $\Irr_{d}$ 
    does not have a simple description. 

Following Hyde and Chen, 
    our strategy is to use the stratification 
    by splitting types 
    on the projective space of 
    all hypersurfaces. 
Where Hyde used generating function methods 
    and Chen used the spectral sequence 
    associated to this stratification, 
    we will use plethysm and polysymmetric functions 
    to compute the motivic class 
    of $\Irr_{d}$ in 
    the Grothendieck ring of varieties $\KV{}$ 
    over any base field. 
Our formula determines 
    the contribution from unstable cohomological degrees 
    to the number of points of $\Irr_{d}$ over finite fields. 
For the stratum of hypersurfaces with an arbitrary splitting type 
    we give a weaker result, namely a formula for its class 
    in a certain quotient ring of $\KV{}$. 

\begin{theorem}\label{thm:GeomIrred}
The class of $\Irr_{d}$ 
    in the Grothendieck ring of varieties $\KV{}$ is 
\begin{equation}\label{eqn:irrdnmotivicformula} 
    [\Irr_{d}] = 
\sum_{d=km}
\sum_{\lambda \vdash m}
    (-1)^{\ell-1}\frac{\mu(k)}{k\ell}
    \binom{\ell}{n_1,\ldots,n_m}
\prod_{j=1}^m
\Big( 
    1 + [\AA^1]^{k} + \cdots + 
[\AA^1]^{k\left(\binom{n+j}{j}-1\right)}
\Big)^{n_j} .
\end{equation} 
Let $\RC{}$ be the largest $\lambda$-quotient of $\KV{}$ 
    that is a binomial ring. 
Let $\Irr_{\tau}$ be the space of 
    hypersurfaces in projective $n$-space 
    with geometric splitting type $\tau$ 
    and let $\Rcc{\Irr_{\tau}}$ be its class in $\RC{k}$. 
The binomial class $\Rcc{\Irr_{\tau}}$ is zero 
    unless $d^m \in \tau$ implies $d= 1$ in which case 
\begin{equation} 
    \Rcc{\Irr_{\tau}} = 
        \binom{\Rcc{\AA^1}(n-1) + 1}{\tau[1^1],\ldots,\tau[1^d]}
\end{equation} 
    where $\tau[1^j]$ is 
    the number of occurrences of $1^j$ in $\tau$. 
\end{theorem} 

\begin{figure}[H] 
\centering
\begin{equation} 
\begin{array}{|r|l|} 
    \hline
    (d,n)&[\Irr_{d}]\text{ as a polynomial in $q = [\AA^1]$}\\
    \hline
    (4,2)&\mathbf{q^{14} + q^{13} + q^{12}} - 2 q^{10} - 2 q^{9} - q^{8} + q^{7} + q^{6}\\
    \hline
    (5,2)&\mathbf{q^{20} + q^{19} + q^{18} + q^{17}} - q^{15} - 3 q^{14} - 3 q^{13} - q^{12} + 2 q^{11} + 3 q^{10} + q^{9} - q^{8} - q^{7}\\
    \hline
    (3,3)&\mathbf{q^{19} + q^{18} + q^{17} + q^{16} + q^{15} + q^{14} + q^{13}} - q^{11} - 2 q^{10} - 3 q^{9} - 2 q^{8} - q^{7} + q^{5} + q^{4}\\
    \hline
\end{array} 
\end{equation} 
    \caption{Some cases of the theorem. Powers of $q$ in Chen's stable range are in bold.} 
\end{figure} 

\begin{remark}
Along the way 
    we prove a general formula for the binomial class     
    of a generalized configuration space 
    (Theorem~\ref{thm:charclassgenconfigsp}) 
    with the help of a useful recurrence 
    (Proposition~\ref{prop:generalizedconfspacerecurrence}) 
    satisfied by their motivic classes. 
\end{remark}

\begin{remark}
By using the $\lambda$-ring $\ZZ[q]$ 
    in place of $\KV{}$, 
    our method also computes the number $M_{d,n}(q)$ 
    of \emph{arithmetically} irreducible 
    degree $d$ hypersurfaces in projective $n$-space over 
    the finite field $\FF_q$, 
    a problem studied by several authors 
\cite{carlitz63}, 
\cite{MR0172872}, 
\cite{MR2516427}, 
\cite{MR3004008}, 
\cite{bodin}, 
\cite{MR2594509}: 
\begin{equation} 
    M_{d,n}(q)=
\sum_{d=km}
\sum_{\lambda \vdash m}
    (-1)^{\ell-1}\frac{\mu(k)}{k\ell}
    \binom{\ell}{n_1,\ldots,n_m}
\prod_{j=1}^m
\Big( 
    1 + q + \cdots + 
    q^{\left(\binom{n+j}{j}-1\right)}
\Big)^{n_j} .
    \end{equation} 
When $n = 1$ this recovers the classical formula of Gauss 
    $$\frac{1}{d}\sum_{d=km}\mu(k)q^{m}$$ 
    for the number\footnote{Strictly speaking, 
    in the degenerate case $d=1$ 
    our formula counts the point at infinity 
    while Gauss's does not.} 
    of monic irreducible 
    degree $d$ polynomials in $\FF_q[t]$. 
\end{remark}


\subsection*{Acknowledgements} 

\vspace{.5em}

The authors would like to thank 
Suki Dasher, 
Jordan Ellenberg, 
Darij Grinberg, 
Trevor Hyde, 
Lauren\c{t}iu Maxim, 
Victor Reiner, 
Harry Richman for helpful discussions and 
comments on earlier versions of this paper. 
We would also like to thank Steven Sam 
for pointing out to us 
the connection with Specht's dissertation, 
as well as an anonymous referee 
for bringing \cite[Prop.~8.6]{modularoperads} to our attention.

\section{Polysymmetric functions}\label{sec:1} 


Let $x_{\ast \ast}= \{x_{ij}\}_{i,j = 1}^\infty$ 
be a set of indeterminates where $x_{ij}$ has degree $i$. 
Let $\ZZ[\![x_{\ast \ast}]\!]$ denote 
the ring of all formal infinite sums of monomials in 
$x_{\ast\ast}$ with integer coefficients and bounded degree. 
Let $\IPG$ denote the subgroup of permutations of 
$\NNpos^2$ 
fixing the first coordinate.
This group acts on 
$\ZZ[\![x_{\ast \ast}]\!]$ by 
$(\sigma \cdot f)(x_{ij}) = f(x_{\sigma(ij)})$. 
The \defn{ring of polysymmetric functions} is 
the subring 
$$\PS_\ZZ = \ZZ[\![x_{\ast \ast}]\!]^{\IPG}$$ 
of $\IPG$-invariants in $\ZZ[\![x_{\ast \ast}]\!]$. 
We set $\PS_R = \PS_{\ZZ} \otimes_\ZZ R$ for a ring $R$ 
    and $\PS = \PS_{\QQ}$ 
    (note that in the introduction we wrote $\PS$ for $\PS_\ZZ$). 

\begin{definition}\label{definition:types}
A \defn{type $\tau$ of degree $d$} (written $\tau \vDash d$) 
is a finite multiset 
$$\tau=\{(p_1,m_1),\ldots,(p_r,m_r)\}$$ 
of ordered pairs of positive integers 
satisfying $p_1m_1+\cdots+p_rm_r = d$. 
We also employ exponential notation, 
writing $\tau=\vec{p}^{\vec{m}}$ or 
$(p_1^{m_1} p_2^{m_2} \cdots p_r^{m_r})$. 
The $p_j$'s and $m_j$'s are called 
    the \defn{degrees} and \defn{multiplicities} of $\tau$. 
We let $\tau[p^m]$ denote 
the number of occurrences of $p^m$ in $\tau$. 
\end{definition}

\begin{remark}
As a general rule, 
types serve the same role for polysymmetric functions as 
partitions do for symmetric functions. 
For instance, 
the Hilbert series of $\PS$ is the generating function for types. 
It satisfies 
$\sum_{d = 0}^\infty 
\mathrm{dim}\,\PS_{d}\,t^d 
    =\sum_{d=0}^\infty
    \#\{\tau \vdash d\} t^d
=
\prod_{k = 1}^\infty 
(1-t^k)^{-\sigma_0(k)}$ 
where $\sigma_0(k)$ is the number of divisors of $k$. 
\end{remark}

%

\begin{definition}\label{defn:monomialpolysymmetricfunction}
The monomial $x_{i_1j_1}^{m_1}\cdots x_{i_rj_r}^{m_r}$ 
with $m_1,\ldots,m_r$ positive and $r$ minimal has 
    \defn{type $(i_1^{m_1} i_2^{m_2} \cdots i_r^{m_r})$}. 
The \defn{monomial polysymmetric function $M_\tau$ of type $\tau$} 
is the sum 
over all monomials of type $\tau$. 
\end{definition}

\begin{remark}\label{rmk:Mtautosymmetricmlambda}
If $\tau_w$ is the multiset of integers $k$ 
    such that $(w,k) \in \tau$, 
then $M_\tau$ equals 
    $m_{\tau_1}(x_{1\ast})\cdots m_{\tau_d}(x_{d\ast})$ 
    where $m_\lambda$ 
    is the monomial symmetric function of shape $\lambda$. 
\end{remark}

\begin{example}
$$
M_{1^2 1^2 1^3}
=
\sum_{\substack{1 \leq i < j < \infty\\1 \leq k < \infty,\, k \not \in \{i,j\}} }
x_{1i}^2
x_{1j}^2
x_{1k}^3
\quad
\text{and} 
\quad
M_{1^2 1^2 2^3}
=
\sum_{\substack{1 \leq i < j < \infty\\1 \leq k < \infty}}
x_{1i}^2
x_{1j}^2
x_{2k}^3. 
$$
Note that $M_{\tau \cup \tau'} \neq M_{\tau}M_{\tau'}$ (e.g. 
$M_{1 1} \neq M_{1}^2 = 
2M_{1 1} + M_{1^2}$). 
\end{example}

For a graded ring $A = \bigoplus_{d}A_d$ 
and positive integer $n$, 
let $A_{(n)}$ be the graded ring 
supported only in degrees that are multiples of $n$ 
and satisfying $(A_{(n)})_{dn} = A_d$ 
for all $d$. 
There is a natural injection 
$\Lambda_{\ZZ,(n)} \to \PS_\ZZ$ 
induced by mapping the indeterminate $y_j$ to $x_{dj}$. 
These combine into a homomorphism 
\begin{equation}
    \bigotimes_{n=1}^\infty \Lambda_{\ZZ,(n)} 
    \xrightarrow{\sim} \PS_\ZZ
\end{equation}
which is an isomorphism of graded rings. 
In particular, the set $\{M_\tau\}_{\text{$\tau$ type}}$ 
is a linear basis for $\PS_{\ZZ}$ as a free $\ZZ$-module, 
and the rank of $\PS_{\ZZ,d}$ is 
the number of types of degree $d$. 
%

\begin{remark}
The tensor product description of $\PS$ implies 
that any linear basis of $\Lambda$ induces a linear basis 
of $\PS$ consisting of pure tensors. 
Among the five polysymmetric bases constructed in this paper --- 
$HEE^+PM$ ---  
    {only} the monomial polysymmetric basis 
    consists of pure tensors 
with respect to one of the 
classical bases of symmetric functions. 
The other polysymmetric bases are non-classical in this sense. 
\end{remark}


\subsection{Coefficients of the plethystic logarithm}\label{sec:mainformulapf} 

Let $R$ be a commutative ring with unity. 

\subsubsection{$\lambda$-rings} 

A \defn{$\lambda$-ring structure} for $R$ is a group homomorphism 
$\lambda \colon R \to 1 + tR[\![t]\!]$ 
satisfying $\lambda(r) = 1 + rt + O(t^2)$ for all $r$. 
Let $\lambda_k(r)$ denote the $k$th coefficient of $\lambda(r)$. 
A homomorphism of $\lambda$-rings $\phi\colon R \to S$ 
    is a ring homomorphism satisfying 
$\phi(\lambda_k(r)) = \lambda_k(\phi(r))$ for all $r$. 
A $\lambda$-ring is \defn{special} if 
$\lambda \colon R \to 1 + t R[\![t]\!]$ 
is a $\lambda$-ring homomorphism 
when $1 + t R[\![t]\!]$ 
is equipped with its canonical $\lambda$-structure 
(Example~\ref{ex:wittvectors}). 

\begin{remark}
If $\lambda=\lambda_t$ is a $\lambda$-structure for $R$, then 
    $\sigma_t=\lambda_{-t}^{-1}$ is 
    another $\lambda$-structure for $R$, 
    called the associated \defn{$\sigma$-structure} 
    or the \defn{opposite $\lambda$-structure}. 
\end{remark}

For any integral symmetric function 
$f \in \Lambda_\ZZ=\ZZ[e_1,e_2,\ldots]$ 
expressed as a polynomial in elementary symmetric functions, 
let 
\begin{align} 
    f \circ- \colon R &\to R\\
    r &\mapsto f \circ r = f(\lambda_1(r),\lambda_2(r),\ldots). 
\end{align} 

The next proposition is well-known. 

\begin{proposition}\label{prop:plethysm}
For any $r \in R$, 
the function $- \circ r \colon \Lambda_\ZZ \to R$ 
is a ring homomorphism, 
which commutes with $\lambda$-operations 
if and only if 
$(g \circ f) \circ r = g \circ (f \circ r)$ 
for all $f,g \in \Lambda_\ZZ$. 
If $R$ is special then 
$(g \circ f) \circ r = g \circ (f \circ r)$ 
    for any $f,g\in \Lambda_\ZZ$, $r\in R$. 
\end{proposition}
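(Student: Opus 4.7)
The plan is to dispatch the three claims in order, using the fact that $\Lambda_\ZZ = \ZZ[e_1,e_2,\ldots]$ is a polynomial algebra freely generated by the elementary symmetric functions.

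First, to see that $-\circ r$ is a ring homomorphism, note that Definition~\ref{defn:leftplethysm} specializes $e_n \mapsto \lambda_n(r)$ and extends by polynomial substitution; because $\Lambda_\ZZ$ is freely generated as a commutative ring by the $e_n$, this prescription uniquely determines a ring homomorphism.

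For the iff, the key observation is that inside $\Lambda_\ZZ$ itself the $\lambda$-operations are recovered by plethysm with the $e_n$: unwinding the definition, $e_n \circ f = e_n(\lambda_1(f),\lambda_2(f),\ldots) = \lambda_n(f)$, since $e_n$ viewed as a polynomial in the $e_k$'s is simply $e_n$. Granting this, suppose first that $-\circ r$ commutes with the $\lambda$-operations; then for any polynomial $g \in \ZZ[e_1,e_2,\ldots]$,
\[
g \circ (f \circ r) = g(\lambda_1(f\circ r), \lambda_2(f\circ r), \ldots) = g(\lambda_1(f)\circ r, \lambda_2(f)\circ r, \ldots) = g(\lambda_1(f),\lambda_2(f),\ldots) \circ r = (g \circ f) \circ r,
\]
where the penultimate equality uses that $-\circ r$ is a ring homomorphism and the second equality uses the hypothesized $\lambda$-commutation. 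Conversely, setting $g = e_n$ in the associativity identity collapses to $\lambda_n(f)\circ r = \lambda_n(f \circ r)$, which is exactly $\lambda$-commutation.

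For the last claim I would appeal to the universal property cited just above in \eqref{eqn:nongradedhomsetidentify}: $\Lambda_\ZZ$ is the free special $\lambda$-ring on the generator $h_1$, so when $R$ is special there exists a unique $\lambda$-ring homomorphism $\phi\colon \Lambda_\ZZ \to R$ with $\phi(h_1)=r$. Applying the $\lambda$-ring property to $e_n = \lambda_n(h_1)$ gives $\phi(e_n) = \lambda_n(r) = e_n \circ r$, so $\phi$ and $-\circ r$ are two ring homomorphisms out of $\Lambda_\ZZ$ agreeing on each generator $e_n$ and therefore coincide. Consequently $-\circ r$ is a $\lambda$-ring homomorphism, and the iff already established delivers associativity of plethysm. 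The essentially only nontrivial input is this universal property — which itself is a standard consequence of the splitting principle for special $\lambda$-rings — while everything else is a formal manipulation of Definition~\ref{defn:leftplethysm} together with the freeness of $\Lambda_\ZZ$ as a polynomial ring in the $e_n$.
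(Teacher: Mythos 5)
Your proof is correct and follows essentially the same strategy as the paper: the ring-homomorphism claim is immediate from freeness of $\Lambda_\ZZ$ as a polynomial ring, the iff turns on the observation $e_n \circ f = \lambda_n(f)$ together with $e_n$-generation, and the special case invokes the universal property of $\Lambda_\ZZ$ to produce a $\lambda$-homomorphism $\phi$ with $\phi(h_1)=r$. The only cosmetic difference is in the last step: you first establish that $\phi$ and $-\circ r$ coincide by matching them on the generators $e_n$ (so $-\circ r$ is a $\lambda$-homomorphism outright) and then invoke the iff, whereas the paper runs the same identities as a single chain of equalities without isolating $\phi = -\circ r$ as an intermediate claim.
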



\begin{example}[{\cite[Lemma~4.1]{heinloth}}]\label{prop:symmetricmonoidalcategory}
Let $(\mathcal A,\otimes)$ be a $\QQ$-linear tensor category 
and let $p_{\varepsilon,n}\in \QQ[\Sigma_n]$ 
    ($\varepsilon \in\{\pm 1\}$) 
    denote the element of 
    the group algebra of the symmetric group given by 
$p_{\varepsilon,n} = \frac{1}{n!}\sum_{\sigma \in \Sigma_n} 
    \varepsilon^{\sigma} \sigma$. 
Then 
    $\lambda(a) = 
    \sum_{n=0}^\infty
    p_{-1,n}(a^{\otimes n}) t^n$ 
defines a special $\lambda$-structure on 
    the Grothendieck ring $K_0(\mathcal A,\otimes)$. 
Its opposite $\lambda$-structure has operations 
    $\lambda_n(a) = p_{1,n}(a^{\otimes n})$. 
\end{example}

\begin{example}\label{ex:wittvectors}
Let $\Lambda(R) = 1+tR[\![t]\!]$. 
There is a ring structure on $\Lambda(R)$ 
whose {addition} is ordinary multiplication of power series 
and whose {multiplication} $\ast$ 
is determined by the formula 
$(1-at)^{-1} \ast (1-bt)^{-1} = (1-abt)^{-1}$ for $a,b \in R$ 
and extended to general power series 
by linearity and $t$-adic continuity. 
The ring $\Lambda(R)$ 
    (ring of big Witt vectors of $R$ \cite{MR4015455}) 
    has a canonical $\lambda$-structure 
for which it is a special $\lambda$-ring \cite[\S2]{MR2649360}. 
\end{example}




\subsubsection{Adams operations} 

The \defn{$n$th Adams operation} $\psi_n\colon R \to R$ is 
the function defined by $\psi_n = p_n \circ -$ 
where $p_n$ is the $n$th power symmetric function. 
It is well-known that Adams operations are additive 
(see e.g.~\cite{MR244387} for the case of special $\lambda$-rings). 
If the $\lambda$-structure is special, then 
$\psi_n$ is a $\lambda$-endomorphism and 
$\psi_{mn} = \psi_m \psi_n$ (function composition). 


\begin{lemma}\label{lemma:adamstolambda}
Let $f \colon R \to S$ be a ring homomorphism 
between $\lambda$-rings 
and assume $S$ has no additive torsion. 
If $f$ commutes with Adams operations then 
$f$ commutes with $\lambda$-operations. 
\end{lemma}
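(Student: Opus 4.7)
The plan is to proceed by induction on $n$, using Newton's identities to express $\lambda_n(r)$ in terms of $\psi_1(r),\ldots,\psi_n(r)$ together with lower $\lambda_k(r)$, $k < n$.

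Recall the classical Newton identity in $\Lambda_\ZZ$:
\begin{equation*}
n e_n = \sum_{k=1}^{n} (-1)^{k-1} e_{n-k}\, p_k .
\end{equation*}
Applying plethysm $- \circ r$ for $r \in R$ (which is a ring homomorphism by Proposition~\ref{prop:plethysm}) and using that $e_k \circ r = \lambda_k(r)$ and $p_k \circ r = \psi_k(r)$ gives the identity
\begin{equation*}
n\, \lambda_n(r) = \sum_{k=1}^{n} (-1)^{k-1} \lambda_{n-k}(r)\, \psi_k(r)
\end{equation*}
in every $\lambda$-ring $R$ (whether or not $R$ is special, since both sides come from an identity in $\Lambda_\ZZ$).

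The base case is $\lambda_1 = \mathrm{id}$, so $f(\lambda_1(r)) = f(r) = \lambda_1(f(r))$. For the inductive step, suppose $f(\lambda_k(r)) = \lambda_k(f(r))$ for all $k < n$. Applying $f$ to the displayed identity above and using that $f$ is a ring homomorphism commuting with each $\psi_k$ yields
\begin{equation*}
n\, f(\lambda_n(r))
= \sum_{k=1}^{n} (-1)^{k-1} f(\lambda_{n-k}(r))\, f(\psi_k(r))
= \sum_{k=1}^{n} (-1)^{k-1} \lambda_{n-k}(f(r))\, \psi_k(f(r))
= n\, \lambda_n(f(r)),
\end{equation*}
where the middle equality uses the inductive hypothesis on the factors $f(\lambda_{n-k}(r))$ for $k \geq 1$ (together with the base case $\lambda_0 \equiv 1$).

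The hypothesis that $S$ has no additive torsion then lets us cancel the factor $n$, giving $f(\lambda_n(r)) = \lambda_n(f(r))$ and completing the induction. The only nontrivial ingredient is the no-torsion assumption; it is essential because Newton's identities force one to invert $n$, and without this one could only conclude that $n(f(\lambda_n(r)) - \lambda_n(f(r))) = 0$.
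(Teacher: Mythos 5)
Your proof is correct and takes essentially the same approach the paper indicates: the paper's own proof is a one-liner citing Newton's formulas and an induction argument (deferring to \cite[Corollary~3.16]{MR2649360}), and your argument supplies exactly those details. The one point worth making explicit is that the Newton identity $n e_n = \sum_{k=1}^n (-1)^{k-1} e_{n-k} p_k$ holds already in $\Lambda_\ZZ$ and is pushed into $R$ via the ring homomorphism $-\circ r$, so no specialness of $R$ is needed — you note this correctly, and it is precisely the remark the paper makes about the cited reference.
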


\begin{proof}
This follows from Newton's identities 
and an induction argument, see \cite[Corollary~3.16]{MR2649360}. 
The same proof there given for special $\lambda$-rings 
works for general $\lambda$-rings. 
\end{proof}


\subsubsection{Coefficients of the plethystic logarithm} 

Let $R$ be a not necessarily special 
$\lambda$-ring without additive torsion 
and write 
$\mathrm{Log}(1+x_1 t+x_2t^2+\cdots)
=u_1t+u_2t^2+\cdots$. 
Equivalently, $x_1,x_2,\ldots$ and $u_1,u_2,\ldots$ 
are elements in $R$ satisfying 
\begin{equation}\label{eqn:interestingidentity} 
1+\sum_{k=1}^\infty x_k t^k
=
    \prod_{d=1}^\infty
    \exp \left(
    \sum_{r=1}^\infty
    \psi_r u_d
    \frac{t^{rd}}{r}
    \right) . 
\end{equation} 
The formula for $u_d$ 
    proven in \cite[Prop.~8.6]{modularoperads} 
    assumed that $R$ was special. 
This assumption is too strong in practice 
(we will later take $R=\KV{k}$ 
the Grothendieck ring of varieties 
which is not special) 
    so here we give a simple proof of the same formula 
    under minimal assumptions. 

\begin{theorem}\label{thm:maintheorem}
Suppose that $x_1,\ldots,x_d$ are contained 
   in a $\lambda$-subring of $R$ 
   on which Adams operations separate, 
   i.e. $\psi_{mn} = \psi_m \psi_n$. 
Then 
\begin{equation}
    u_d=
\sum_{d=km}
\sum_{\lambda \vdash m}
    (-1)^{\ell-1}\frac{\mu(k)}{k\ell}
    \binom{\ell}{n_1,\ldots,n_m}
\psi_{k}
x^\lambda.
\end{equation} 
\end{theorem}

\begin{remark}
Getzler--Kapranov~\cite{modularoperads} 
    attribute this formula to 
    Cadogan~\cite{cadogan}.  
In the special case when $R = \QQ[t]$ 
    this was rediscovered in 
    \cite[Lemma~6.3]{FLORENTINO2021104008}. 
\end{remark}


\begin{lemma}
Suppose a ring $R$ is equipped with additive functions 
    $\psi_k \colon R \to R$. 
If $(x_k)_{k = 1}^d$ and $(u_k)_{k = 1}^d$ 
    are elements in $R$ 
    such that 
    $\psi_1 u_d = u_d$ and  
    $\psi_{ab}u_c = \psi_a \psi_b u_c$ 
    for any integers $a,b,c$ with $abc = d$,  
    and satisfying 
\begin{equation} 
    x_d = \sum_{k|d} \psi_{d/k} u_k , 
\end{equation} 
then 
\begin{equation} 
    u_d = \sum_{k|d} \mu(d/k) \psi_{d/k} x_k.
\end{equation} 
\end{lemma}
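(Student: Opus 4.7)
The proof will proceed by direct substitution and reduction to classical Möbius inversion. Plugging the hypothesized formula $x_k = \sum_{j\mid k}\psi_{k/j}u_j$ into the right-hand side gives
\begin{equation*}
\sum_{k\mid d}\mu(d/k)\,\psi_{d/k}x_k
=\sum_{k\mid d}\mu(d/k)\,\psi_{d/k}\sum_{j\mid k}\psi_{k/j}u_j,
\end{equation*}
and since each $\psi_{d/k}$ is additive, the inner sum can be pulled out to produce
\begin{equation*}
\sum_{k\mid d}\sum_{j\mid k}\mu(d/k)\,\psi_{d/k}\psi_{k/j}u_j.
\end{equation*}

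Next, the key step is to apply the separability-type hypothesis $\psi_{ab}u_c=\psi_a\psi_b u_c$ with $a=d/k$, $b=k/j$, $c=j$. These satisfy $abc=d$, which is exactly the range in which the hypothesis is assumed to hold, so $\psi_{d/k}\psi_{k/j}u_j=\psi_{d/j}u_j$. After this collapse the double sum becomes
\begin{equation*}
\sum_{k\mid d}\sum_{j\mid k}\mu(d/k)\,\psi_{d/j}u_j.
\end{equation*}

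I would then swap the order of summation. Fixing $j\mid d$ and writing $k=jm$ with $m\mid d/j$, we have $d/k=(d/j)/m$, so the expression equals
\begin{equation*}
\sum_{j\mid d}\psi_{d/j}u_j\sum_{m\mid d/j}\mu\bigl((d/j)/m\bigr).
\end{equation*}
The inner sum is the classical identity $\sum_{m\mid n}\mu(n/m)=\delta_{n,1}$, which is nonzero only when $d/j=1$, i.e.\ $j=d$. Only that term survives, leaving $\psi_{1}u_d$, which by the standing hypothesis $\psi_1 u_d=u_d$ equals $u_d$. This is the desired identity.

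No step is genuinely hard; the only thing to be careful about is that the hypothesis on commuting Adams operations is phrased only for triples $(a,b,c)$ with $abc=d$, rather than globally as a multiplicativity statement on all of $R$. The plan verifies that the particular triples $(d/k,\,k/j,\,j)$ arising from the divisor chain $j\mid k\mid d$ always satisfy $abc=d$, so the hypothesis applies each time it is invoked.
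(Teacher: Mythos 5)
Your proof is correct and follows the same route as the paper's: substitute the defining relation for $x_k$, use additivity of $\psi_{d/k}$ and the hypothesis to collapse $\psi_{d/k}\psi_{k/j}u_j$ to $\psi_{d/j}u_j$, swap the order of summation, and apply classical Möbius inversion to isolate $\psi_1 u_d = u_d$. The paper compresses this into three display lines, but the steps are identical; your added care in verifying that each invoked triple $(d/k, k/j, j)$ satisfies $abc = d$ is a nice touch.
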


\begin{proof}
\begin{align*}
    \sum_{k|d} \mu(d/k)\psi_{d/k} x_k 
    &= \sum_{k|d} \mu(d/k)\psi_{d/k} 
        \sum_{\ell|k}\psi_{k/\ell}u_{\ell}\\
    &= \sum_{\ell|k|d}\mu(d/k)\psi_{d/\ell}u_\ell\\
    &= \sum_{\ell|d}\psi_{d/\ell}u_\ell\sum_{n|d/\ell}\mu(n)
    = u_d.\qedhere
\end{align*}
\end{proof}

Call a subset \defn{separable} 
    if it satisfies the hypothesis of the theorem. 

\begin{lemma}\label{lemma:separableuorx} 
    $\{x_1,\ldots,x_d\}$ is separable 
    if and only if 
    $\{u_1,2u_2,\ldots,du_d\}$ is separable. 
\end{lemma}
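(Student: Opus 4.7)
The plan is to introduce the auxiliary sequence $y_n := n\,[t^n]\log(1 + \sum_{k \geq 1} x_k t^k) \in R$, which will mediate between the $x_k$ and the $\tilde{u}_d := d u_d$. Taking logarithms of both sides of the defining identity and comparing coefficients yields
\[
y_n = \sum_{d \mid n} \psi_{n/d}\,\tilde{u}_d, \qquad (\star)
\]
while expanding the logarithmic derivative of $1+\sum_k x_k t^k$ directly gives Newton's recursion $n x_n = y_n + \sum_{k=1}^{n-1} y_{n-k} x_k$. This recursion shows that each $y_n$ is a $\ZZ$-polynomial in $x_1,\ldots,x_n$ and, conversely, that each $x_n$ is a $\QQ$-polynomial in $y_1,\ldots,y_n$.

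For the forward implication, suppose $x_1,\ldots,x_d$ lie in a separable $\lambda$-subring $S \subset R$. Then $y_n \in S$ for all $n \leq d$. I would define
\[
\tilde{u}'_n := \sum_{k \mid n}\mu(n/k)\,\psi_{n/k}\,y_k \;\in\; S.
\]
A direct M\"obius calculation, valid because Adams operations compose on $S$, yields $\sum_{k \mid n}\psi_{n/k}\tilde{u}'_k = y_n$ in $S$. Subtracting $(\star)$ gives $\sum_{k \mid n} \psi_{n/k}(\tilde{u}'_k - \tilde{u}_k) = 0$ in $R$, and induction on $n$ (using $\psi_1 = \mathrm{id}$) forces $\tilde{u}'_n = \tilde{u}_n$. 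Hence $\{\tilde{u}_1,\ldots,\tilde{u}_d\} \subset S$, which is separable.

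For the converse, suppose $\tilde{u}_1,\ldots,\tilde{u}_d$ lie in a separable $\lambda$-subring $S$. By $(\star)$, each $y_n \in S$, and Newton's recursion then gives $x_n \in (S \otimes \QQ) \cap R =: S'$. It remains to verify that $S'$ is a separable $\lambda$-subring of $R$. Because $R$ has no additive torsion, the Adams operations extend $\QQ$-linearly to $R \otimes \QQ$ and, via the standard rational formula expressing each $\lambda_k$ in terms of $\psi_1,\ldots,\psi_k$, determine a $\lambda$-structure on $R \otimes \QQ$ which restricts to the given one on $R$. Under this extension $S \otimes \QQ \subset R \otimes \QQ$ is a separable $\lambda$-subring, and for any $r \in S'$ the elements $\psi_k r$ and $\lambda_k r$ lie in $R$ and, being $\QQ$-polynomials in $\psi_1 r,\ldots,\psi_k r \in S \otimes \QQ$, also lie in $S \otimes \QQ$, hence in $S'$. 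Separability of $S'$ is then inherited from $S \otimes \QQ$.

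The main obstacle is the converse direction, specifically the verification that $S' = (S \otimes \QQ) \cap R$ is a $\lambda$-subring; this is where the torsion-free hypothesis on $R$ becomes essential, allowing the $\lambda$-structure on $R$ to extend rationally to $R \otimes \QQ$ and pull back compatibly to $S'$.
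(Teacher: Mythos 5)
Your proof is correct and follows essentially the same line as the paper's: you both hinge on the logarithmic identity $y_n = \sum_{k\mid n}\psi_{n/k}(k u_k)$ (the paper writes this as $M_n(x_1,\ldots,x_n) = \sum_{k\mid n}\psi_{n/k}u_k k$), combined with the fact that the $y_n$ are $\ZZ$-polynomials in $x_1,\ldots,x_n$ and the $x_n$ are $\QQ$-polynomials in $y_1,\ldots,y_n$. Your forward direction runs M\"obius inversion where the paper runs a divisor induction; these are interchangeable.

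One genuine difference worth noting: your converse is considerably more careful than the paper's, which disposes of it with a single sentence ("it is clear from \eqref{eqn:interestingidentity} that each $x_k$ is contained in a separable $\lambda$-subring"). That claim is not quite immediate, because the hypothesis puts $k u_k$ in $S$, not $u_k$ itself, so the coefficients of $\sigma_{t^k}(u_k)$ need not lie in $S$ integrally. Your detour through $S' = (S\otimes\QQ)\cap R$ — together with the check that $S'$ is a separable $\lambda$-subring, using torsion-freeness of $R$ to extend Adams operations $\QQ$-linearly and recover the $\lambda$'s by the rational Newton formulas — is exactly what is needed to make the converse airtight. Nothing to fix; if anything, your write-up of the converse is the more complete of the two.
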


\begin{proof}
First we prove an identity. 
Taking the logarithm of \eqref{eqn:interestingidentity} obtains 
\begin{equation}
    \log\left(
    1+
\sum_{n=1}^\infty 
    x_n t^n
    \right)
=
    \sum_{d,k=1}^\infty
    \psi_k u_d
    \frac{t^{kd}}{k}  
    =
    \sum_{m=1}^\infty
    \frac{t^m}{m}
    \sum_{k|m}
    \psi_k u_{m/k}
    \frac{m}{k} . 
\end{equation} 
The coefficient of $\frac{t^k}{k}$ on the left-hand side is 
    $M_k(x_1,\ldots,x_k)$ 
    where $M_k(h_1,\ldots,h_k)$ is defined by\footnote{The $m$th Faber polynomial $F_m$ is defined by $\sum_{m =0}^\infty F_m(a_1,\ldots,a_m;t)z^{-m-1} = \frac{\partial}{\partial z}\log(f(z)-t)$ where $f(z) = z + a_1 + a_2 z^{-1} + a_3 z^{-2} + \cdots$. They satisfy $F_m(h_1,\ldots,h_m;0) = -M_m(h_1,\ldots,h_m)$.} 
\begin{equation} 
    1+h_1t+h_2t^2+\cdots 
    = \exp\left( \sum_{k=1}^\infty p_k \frac{t^k}{k} \right) 
    = \exp\left( \sum_{k=1}^\infty M_k(h_1,\ldots,h_k) \frac{t^k}{k} \right) .
\end{equation} 
Then 
\begin{equation}\label{eqn:uandxidentity} 
    M_d(x_1,\ldots,x_d) = 
    \sum_{k|d}
    \psi_{d/k} u_{k}k .
\end{equation} 

With this identity in hand, 
    now suppose $\{x_1,\ldots,x_d\}$ is separable. 
As $x_1 = u_1$, the singleton set $\{u_1\}$ is separable. 
Assume we have shown $\{u_1,\ldots,(d-1)u_{d-1}\}$ is separable. 
By \eqref{eqn:uandxidentity}, 
\begin{equation} 
    u_dd = 
    M_d(x_1,\ldots,x_d) - 
    \sum_{k|d,k\neq d}
    \psi_{d/k} u_{k}k .
\end{equation} 
The complete homogeneous symmetric functions 
are an integral basis for $\Lambda$ 
    which implies that 
    $M_d(x_1,\ldots,x_d)$ is contained in any ring 
    generated by the $x_1,\ldots,x_d$. 
This shows the right-hand side is contained in 
    a separable $\lambda$-subring of $R$ by hypothesis, 
    which shows that $u_dd$ is as well. 

Conversely, if $\{u_1,2u_2,\ldots,du_d\}$ is separable, 
then it is clear from \eqref{eqn:interestingidentity} 
    that each $x_k$ is contained in a separable 
    $\lambda$-subring of $R$. 
\end{proof}

\begin{proof}[Proof of Theorem~\ref{thm:maintheorem}] 
By Lemma~\ref{lemma:separableuorx} 
    the Adams operations separate over $\{u_1,\ldots,du_d\}$, 
so we may apply M\"obius inversion with Adams operations 
to \eqref{eqn:uandxidentity} to obtain 
\begin{multline} 
    u_d d 
    =
    \sum_{m|d}
    \mu(d/m)
    \psi_{d/m} 
    M_{m}(x_1,\ldots,x_m) \\
    = 
    \sum_{m|d}
    \mu(d/m)
    \psi_{d/m} 
    \sum_{\substack{n_1+2n_2+\cdots + mn_m = m\\n_1,\ldots,n_m \geq 0}}
    \frac{m(-1)^{1+\sum_k n_k}}{\sum_k n_k}
    \frac{(\sum_k n_k) !}{n_1! \cdots n_m!}
    \prod_{k=1}^m
    x_k^{n_k} . 
\end{multline} 
where the expression for $M_m$ comes from \eqref{eqn:pnintermsofh}. 
Note the right-hand side is actually an integral expression 
in powers of $x_1,\ldots,x_d$ and Adams operations 
    in view of the first equality. 
\end{proof} 



\subsubsection{Binomial rings}\label{sec:binomialrings} 

A ring $B$ satisfying any of the equivalent conditions 
    of the following proposition 
    is a \defn{binomial ring}. 

\begin{proposition}[{\cite[\S5]{MR2649360}}]
The following statements are equivalent: 
\begin{enumerate} 
    \item $B$ is $\ZZ$-torsion-free and $\binom{b}{k} = \frac{b(b-1)\cdots(b-k+1)}{k!} \in B \subset B\otimes\mathbb Q$ 
        for all $b \in B$ and $k \geq 1$, 
    \item $B$ is $\ZZ$-torsion-free and 
        $a^p \equiv a \Mod {pB}$ for any $a \in B$ and prime $p$, 
    \item $B$ is a special $\lambda$-ring 
        with trivial Adams operations. 
\end{enumerate} 
\end{proposition}

The $\lambda$-operations on a binomial ring 
    are given by $\lambda_k(b) = \binom{b}{k}$. 

\begin{remark}\label{rmk:quotientBinomialRing}
Any $\lambda$-ring $R$ has
    a canonical quotient binomial ring $R_B$, 
namely the quotient of $R$ by the smallest ideal 
    closed under $\lambda$-operations 
    containing 
    the additive torsion of $R$ 
    and $\psi_m(r) - r$ 
    for every $r \in R$ and $m \geq 1$. 
\end{remark}

The following should be well-known 
though we do not know a reference. 
We leave the proof to the reader. 
%


\begin{proposition}\label{prop:plethysmsasbinomialcoefficients}
    Let $\lambda = (1^{n_1}2^{n_2}\cdots d^{n_d})$ 
    be a partition of degree $d$. 
If $B$ is a binomial ring, then for any $x \in B$ we have 
\begin{equation}
    m_{\lambda}\circ x = \binom{x}{n_1,\dots,n_d}.
\end{equation}
\end{proposition}

\subsection{Higher plethysm}\label{sec:gradedplethysm} 

Let $R$ be a $\lambda$-ring. 
Classically plethysm is defined as a function 
    $f \circ - : R \to R$ for $f \in \lambda$. 
Here we extend it to a function 
$F \circ - \colon tR[\![t]\!] \to R$ 
for $F \in \PS$. 
(Classical plethysm is recovered by identifying 
$R \xrightarrow{\sim} tR \subset tR[\![t]\!]$.) 
The idea is that the higher copies 
$\Lambda_{(1)}, \Lambda_{(2)},\ldots \subset \PS$ 
``witness'' the higher order terms of 
$r_1t+r_2t^2+\cdots \in tR[\![t]\!]$. 
Any polysymmetric function 
$F 
\in \PS_\ZZ 
=\bigotimes_d \Lambda_{\ZZ,(d)}$ 
can be expressed as a polynomial 
$$F(f_1, \ldots, f_k)$$ 
where $f_j\in \Lambda_{\ZZ,(j)}$. 
Let $$F \circ - \colon tR[\![t]\!] \to R$$ 
    be the function defined by 
\begin{equation} 
F \circ (r_1t+r_2t^2+\cdots) = 
F(f_1 \circ r_1, \ldots, f_k \circ r_k).
\end{equation} 
We state some basic facts about higher plethysm, 
omitting the proofs as they are analogous to the ordinary case. 

\begin{proposition}\label{prop:gradedplethysm}
Let $r = r_1t+r_2t^2+\cdots\in tR[\![t]\!]$. 
\hfill\begin{enumerate}
\item $- \circ r$ 
is a ring homomorphism $\PS_\ZZ \to R$ 
and it commutes with $\lambda$-operations 
if and only if 
$(g \circ f) \circ r= g \circ (f \circ r)$ 
for all $f,g \in \PS_\ZZ$. 
    If $R= \bigoplus_{d} R_d$ is a graded $\lambda$-ring\footnote{A \defn{graded $\lambda$-ring} is a graded ring with $\lambda$-operations satisfying $\lambda_n(R_d) \subset R_{nd}$ 
        for all $n$ and $d$.} 
    and $r_d \in R_d$ for all $d$,  
    then $- \circ r \colon \PS_\ZZ \to R$ is 
    a graded ring homomorphism. 
\item 
There is a natural bijection 
\begin{align}\label{eqn:homsetidentify} 
\Hom_{\GrLR}(\PS_\ZZ,R) 
&\xrightarrow{\sim} \prod_{d=1}^\infty R_d \\
\phi &\mapsto (\phi(M_d))_{d}. 
\end{align} 
\end{enumerate}
\end{proposition}

%



\section{Families of polysymmetric functions}\label{sec:bases} 


\subsection{Definitions and identities}\label{sec:canonicalbases} 

The \defn{length} $\len f$ of a monomial $f$ is defined by 
$y^{\len f}f(x_{\ast\ast}) = f(yx_{\ast\ast})$. 
Let $d$ be a positive integer. 
\begin{itemize}
\item[$\diamond$] The \defn{$d$th complete homogeneous polysymmetric function} is 
\begin{equation} 
H_d = 
\sum_{\substack{\text{monomials $f$}\\\text{of degree $d$}} } 
   f .
\end{equation} 
\item[$\diamond$] The \defn{$d$th elementary polysymmetric function} is 
\begin{equation} 
E_d = 
\sum_{\substack{\text{sq.free monomials}\\\text{$f$ of degree $d$}} } 
   (-1)^{\mathrm{len}\, f}f. 
\end{equation} 
\item[$\diamond$] The \defn{$d$th unsigned elementary polysymmetric function} is
\begin{equation} 
E^+_{d} = 
\sum_{\substack{\text{sq.free monomials}\\\text{$f$ of degree $d$}} } 
   f.
\end{equation} 
\item[$\diamond$] The \defn{$d$th power polysymmetric function} is
\begin{equation} 
P_d = 
\sum_{k|d} k 
\sum_{\substack{\text{monomials $f$}\\\text{of type $k^{d/k}$}} } f = 
\sum_{k|d} k \sum_{j=1}^\infty x_{kj}^{d/k}.
\end{equation} 
\end{itemize}
For a type $\tau$ we define 
\begin{equation} 
H_{\tau} = 
\prod_{d^m \in \tau} \psi_m( H_{d}) = 
\prod_{d^m \in \tau} H_{d}(x_{\ast\ast}^m)
\end{equation} 
where $\psi_m$ is the $m$th Adams operation on $\PS$. 
The polysymmetric functions $E_\tau$, $E^+_\tau$ and $P_\tau$  
are defined analogously.\footnote{$H_{2^2}$ always means 
$\psi_2 H_2 = H_2(x_{\ast\ast}^2)$ and never $H_4$.} 

\begin{remark}
One can also define 
$H_{d^m}$, $E_{d^m}$, and $E^+_{d^m}$ 
with generating functions: 
$\sum_{d} H_{d^m}t^{md} = 
\prod_{i,j}(1-x_{ij}^mt^{mi})^{-1}$, 
$\sum_{d} E_{d^m}^+t^{md} = 
\prod_{i,j}(1+x_{ij}^mt^{mi})$,
and $\sum_{d} E_{d^m}t^{md} = 
\prod_{i,j}(1-x_{ij}^mt^{mi}).$ 
\end{remark}

These bases behave similarly to their symmetric siblings. 
For instance, the polysymmetric analogue of 
\begin{equation}\label{eqn:fundamentalidentitysymmfn} 
\sum_{d=0}^\infty
h_dt^d
=
\left(\sum_{d=0}^\infty
(-1)^d e_dt^d\right)^{-1}
=
\sum_{\lambda \text{ partition}}
    m_\lambda t^{|\lambda|}
=
\exp\left(
\sum_{k=1}^\infty
    p_k \frac{t^k}{k}
\right) 
\end{equation} 
is 
\begin{equation}\label{eqn:fundidentitypolysymm} 
\sum_{d=0}^\infty
H_dt^d
=
\left(\sum_{d=0}^\infty
E_dt^d\right)^{-1}
=
\left(\sum_{d=0}^\infty
E_d^+(-x_{\ast\ast})t^d\right)^{-1}
=
\sum_{\tau \text{ type}}
    M_\tau t^{|\tau|}
=
\exp\left(
\sum_{k=1}^\infty
    P_k \frac{t^k}{k}
\right). 
\end{equation} 


We record a list of further identities for future reference: \\[-.5em]
\begin{align}
&\sum_{d=0}^\infty H_{d}t^{d} = 
\prod_{n=1}^\infty 
    \sigma_{t^n}(M_n), 
    &
&\sum_{d=0}^\infty H_{d}t^{d} = 
\prod_{n=1}^\infty 
    \sum_{k=0}^\infty 
    h_{k,(n)} t^{kn}, \\
&\sum_{d=0}^\infty H_{d}t^{d} = 
\sum_{\tau \text{ type}}
    M_\tau t^{|\tau|}, 
    &
&\sum_{d=0}^\infty E_{d}^+t^{d} = 
\sum_{\tau \text{ sq.free}}
    M_\tau t^{|\tau|},\\
&\sum_{k=0}^dH_{k^2}E_{d-2k}^+ = H_d, 
&
&\sum_{k=0}^dH_{k}E_{d-k} = \delta_{d,0},\\
&P_d 
= \sum_{k|d} k \psi_{d/k} M_k, 
&
&M_d = \frac{1}{d} \sum_{k|d} \mu(\tfrac{d}{k}) \psi_{d/k} P_{k}.
\end{align}


\begin{remark}\label{rmk:gorsky}
In the language of polysymmetric functions, 
    Gorsky's formula \cite[Theorem~1]{gorsky} 
    expresses $M_d$ in terms of the power polysymmetric basis. 
\end{remark}

%



\subsection{Generating properties} 

\begin{theorem}\label{thm: H, E algebraic basis}
Each of the sets $\{E_\tau^+\}_{\tau\text{ type}}$, 
$\{H_\tau\}_{\tau\text{ type}}$ 
is a linear basis of $\PS$. 
Each of the sets $\{E_{d^m}^+\}_{d,m = 1}^\infty$, 
$\{H_{d^m}\}_{d, m = 1}^\infty$ 
is an algebraic basis of $\PS$. 
\end{theorem}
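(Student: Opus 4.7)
My plan is to deduce both assertions from showing that $\{H_{d^m}\}_{d,m \geq 1}$ and $\{E^+_{d^m}\}_{d,m \geq 1}$ are algebra generators of $\PS$. The linear basis statements for $\{H_\tau\}$ and $\{E^+_\tau\}$ then follow immediately, because a set of polynomial generators for a polynomial algebra produces a monomial linear basis indexed by multiplicity functions on the generators, which for $(d,m)$ ranging over $\NNpos^2$ are precisely the types $\tau$ (with $H_\tau = \prod_{d^m \in \tau} H_{d^m}^{\tau[d^m]}$, and similarly for $E^+$). By Proposition~\ref{prop: WS as tensor product} and $\Lambda = \QQ[h_1,h_2,\ldots]$, the ring $\PS$ is the polynomial algebra $\QQ[h_k\langle n\rangle : k,n \geq 1]$ with $\deg h_k\langle n \rangle = kn$. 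Writing $I$ for the augmentation ideal, the indecomposable quotient $(I/I^2)_N$ has $\QQ$-basis $\{h_k\langle n \rangle \bmod I^2 : kn = N\}$, of dimension $\sigma_0(N)$; Newton's identity inside each subring $\Lambda\langle k \rangle$ gives $p_s\langle k\rangle \equiv s\, h_s\langle k\rangle \pmod{I^2}$ for all $s,k \geq 1$.

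For $\{H_{d^m}\}$, the identity $\sum_d H_d t^d = \exp(\sum_k P_k t^k/k)$ from \eqref{eqn:fundidentitypolysymm} gives $H_d \equiv P_d/d \pmod{I^2}$, since every higher term in the exponential expansion is a product of two or more $P_k$'s. Applying the ring homomorphism $\psi_m$ (which preserves $I^2$) and using $\psi_m\, p_s\langle k\rangle = p_{ms}\langle k\rangle$, I compute
\[
H_{d^m} \equiv \tfrac{1}{d}\sum_{k \mid d} k\, p_{N/k}\langle k\rangle \equiv m \sum_{k \mid d} h_{N/k}\langle k\rangle \pmod{I^2}, \qquad N = dm.
\]
Indexing $\{H_{d^m} : dm = N\}$ and $\{h_{N/k}\langle k \rangle : k \mid N\}$ by the divisors of $N$, the transition matrix with entries $(N/d)\cdot[k \mid d]$ is lower triangular in the divisibility order with nonzero diagonal entries $N/d$, hence invertible. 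Thus the images of $\{H_{d^m} : dm = N\}$ span $(I/I^2)_N$, and $\{H_{d^m}\}_{d,m \geq 1}$ generates $\PS$ as a $\QQ$-algebra. Algebraic independence then follows from a Hilbert series comparison: $\QQ[Y_{d,m}]_{d,m \geq 1}$ (with $\deg Y_{d,m} = dm$) and $\PS$ both have Hilbert series $\prod_{d,m \geq 1}(1 - t^{dm})^{-1} = \prod_{N \geq 1}(1 - t^N)^{-\sigma_0(N)}$, so the surjective graded homomorphism $Y_{d,m} \mapsto H_{d^m}$ is an isomorphism.

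The $E^+$ case is analogous. Taking the formal logarithm of the product formula $\sum_d E^+_d t^d = \prod_{i,j}(1 + x_{ij} t^i)$ gives $E^+_d \equiv \tfrac{1}{d} \sum_{i \mid d} i\,(-1)^{d/i - 1} p_{d/i}\langle i \rangle \pmod{I^2}$, and applying $\psi_m$ then Newton's identity yields $E^+_{d^m} \equiv m \sum_{i \mid d} (-1)^{d/i - 1} h_{N/i}\langle i\rangle \pmod{I^2}$. The corresponding transition matrix is once more lower triangular in the divisibility order with nonzero diagonal $N/d$, so the same argument concludes. I expect the main obstacle to be the generating-function bookkeeping needed to reduce $H_{d^m}$ and $E^+_{d^m}$ modulo $I^2$; once these explicit congruences are in hand, the remaining ingredients are routine linear algebra and the Hilbert series comparison.
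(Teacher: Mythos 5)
Your proof is correct but takes a genuinely different route from the paper's. The paper's proof proceeds in the opposite order: it first establishes that $\{H_\tau\}$ is a \emph{linear} basis by observing that the change-of-basis matrix from $\{M_\tau\}$ (already known to be a basis) to $\{H_\tau\}$ is the arrangement matrix $\arr$, and invoking its invertibility in the incidence algebra $\IT{d}$ (Lemma~\ref{lemma:incidenceinvertible}, whose content is that the diagonal entries $\arr(\tau,\tau) = \prod \tau[b^m]!$ are nonzero). The algebraic basis statement is then read off from the fact that the monomials in $\{H_{d^m}\}$ are exactly the $H_\tau$. Your approach instead deduces the linear basis statement from the algebraic one: you work in the indecomposable quotient $I/I^2$ of $\PS = \QQ[h_k\langle n\rangle]$, use Newton's identities and the Adams operations to show $H_{d^m}$ and $E^+_{d^m}$ span the degree-$N$ indecomposables (via a transition matrix that is triangular for divisibility), and then get algebraic independence from a Hilbert series count. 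Both are valid; the trade-off is that your argument is self-contained within this section and exhibits explicit congruences $H_{d^m} \equiv m\sum_{k\mid d}h_{N/k}\langle k\rangle \pmod{I^2}$, whereas the paper's argument is shorter given that the arrangement machinery (\S\ref{sec:matrixcoeffs}) is developed anyway and used centrally throughout, so identifying the transition matrix with $\arr$ is work the paper wants to do regardless. One small structural note: the paper's proof requires a forward reference to Lemma~\ref{lemma:incidenceinvertible} (there is no circularity, but it does depend on the merge/forget partial order of \S\ref{sec:mergingforgetting}), which your argument avoids.
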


\begin{proof} 
The matrix coefficients in the monomial polysymmetric basis 
of the linear mapping of $\PS_{d}$ defined by 
$M_\tau \mapsto H_\tau$ 
are given by $\arr$. 
This matrix is invertible as $\arr$ is invertible in 
    the incidence algebra $\IT{d}$ of types over $\QQ$ 
    (see Proposition~\ref{prop:arrangementforgetmerge} below). 
Thus, the set $\{H_\tau\}_{\tau \vDash d}$ is a $\QQ$-basis of $\PS_d$. 
The set $\{H_\tau\}_{\tau \text{ type}}$ is precisely 
the set of monomials in 
$\{H_{d^m}\}_{d,m = 1}^\infty$, 
which shows that $\PS$ is free as a $\QQ$-algebra on the set 
$\{H_{d^m}\}_{d,m = 1}^\infty$. 
The same argument works with $E_\tau^+$ in place of $H_\tau$ 
and $\arrsqf$ in place of $\arr$. 
%
%
\end{proof} 

The above theorem is useful for constructing maps out of $\PS$. 

\begin{proposition}\label{prop:omegainvolution}
The ring endomorphism $\Omega \colon \PS \to \PS$ 
defined by $\Omega(H_{d^m}) = E_{d^m}$ 
is an involution. 
\end{proposition}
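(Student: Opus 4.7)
The plan is to prove $\Omega(E_{d^m}) = H_{d^m}$ for all $d,m \geq 1$; from this it follows that $\Omega^2$ is a ring endomorphism fixing the algebraic basis $\{H_{d^m}\}_{d,m \geq 1}$ of $\PS$ from Theorem~\ref{thm: H, E algebraic basis}, and hence $\Omega^2 = \mathrm{id}$.

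First I would show that $\Omega$ commutes with every Adams operation $\psi_n$. Both $\Omega \circ \psi_n$ and $\psi_n \circ \Omega$ are ring endomorphisms of $\PS$, so it suffices to verify the equality on the algebraic generators $H_{d^m}$. Since $\psi_n$ acts on $\PS$ by the substitution $x_{ij} \mapsto x_{ij}^n$, we have $\psi_n \psi_m = \psi_{nm}$, whence $\psi_n H_{d^m} = H_{d^{nm}}$ and $\psi_n E_{d^m} = E_{d^{nm}}$. Then
$$\Omega(\psi_n H_{d^m}) = \Omega(H_{d^{nm}}) = E_{d^{nm}} = \psi_n E_{d^m} = \psi_n \Omega(H_{d^m}),$$
as required.

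Next I would apply the ring homomorphism $\Omega$ coefficientwise to the identity $\sum_{k=0}^d H_k E_{d-k} = \delta_{d,0}$ from \S\ref{sec:canonicalbases}, obtaining $\sum_{k=0}^d E_k \Omega(E_{d-k}) = \delta_{d,0}$ for each $d \geq 0$. Induction on $d$ (with base case $\Omega(E_0) = 1 = H_0$) and comparison with the original identity then force $\Omega(E_d) = H_d$ for all $d \geq 0$. Combining with the commutation above yields
$$\Omega(E_{d^m}) = \Omega(\psi_m E_d) = \psi_m \Omega(E_d) = \psi_m H_d = H_{d^m},$$
which finishes the proof.

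The only real subtlety is the commutation of $\Omega$ with Adams operations; everything else is direct manipulation of the generating function identities in \S\ref{sec:canonicalbases}. The commutation itself is not deep and ultimately rests on the substitution description $\psi_n f(x_{\ast\ast}) = f(x_{\ast\ast}^n)$, but without it one obtains only $\Omega(E_d) = H_d$ (the $m=1$ case), which would not suffice to establish involutivity on the full algebraic basis $\{H_{d^m}\}_{d,m \geq 1}$.
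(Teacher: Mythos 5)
Your proof is correct and follows essentially the same route as the paper: both hinge on the relation $\sum_{k=0}^d H_k E_{d-k} = \delta_{d,0}$ (equivalently $\mathcal{H}\mathcal{E}=1$ where $\mathcal{H}=\sum H_d t^d$, $\mathcal{E}=\sum E_d t^d$), apply $\Omega$ to it, and conclude $\Omega(E_d)=H_d$. The only organizational difference is that you first prove $\Omega$ commutes with Adams operations and then bootstrap to all $m$, whereas the paper skips that lemma and simply re-runs the generating-function argument on $\psi_m\mathcal{H}\cdot\psi_m\mathcal{E}=1$; both handle the $m>1$ case correctly.
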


\begin{proof} 
Extend $\Omega$ to an endomorphism of $\PS[\![t]\!]$ by acting trivially on $t$. 
Define $\mathcal H\coloneqq \sum_{d=0}^\infty H_{d}t^d$ 
and $\mathcal E \coloneqq \sum_{d=0}^\infty E_{d}t^{d}$. 
We have the factorizations 
$\mathcal H = \prod_{i,j=1}^\infty(1-x_{ij}t^i)^{-1}$ 
and 
$\mathcal E= \prod_{i,j = 1}^\infty(1-x_{ij}t^{i})$, 
and so 
$
1 = 
\Omega(\mathcal H) \Omega(\mathcal E)
=
\mathcal E \Omega(\mathcal E)
$ 
which implies $\Omega(\mathcal E) = \mathcal E^{-1} = \mathcal H$. 
We conclude $\Omega(E_{d}) = H_{d}$ 
for all $d$. 
Running the same argument for $\psi_m(\mathcal E)$ and $\psi_m(\mathcal H)$ 
shows that $\Omega(E_{d^m}) = H_{d^m}$. 
\end{proof} 

\begin{corollary}
$\{E_{d^m}\}_{d,m = 1}^\infty$ is an algebraic basis of $\PS$. 
$\{E_\tau\}_{\tau\text{ type}}$ is a linear basis of $\PS$. 
\end{corollary}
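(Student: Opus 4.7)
The plan is to leverage the involution $\Omega$ from Proposition~\ref{prop:omegainvolution} to transport the basis properties of the $H$-families (established in Theorem~\ref{thm: H, E algebraic basis}) over to the $E$-families. Since $\Omega$ is an involution and in particular a ring automorphism of $\PS$, it sends algebraic bases to algebraic bases and linear bases to linear bases, so the corollary reduces to checking that $\Omega$ takes the family $\{H_{d^m}\}$ to $\{E_{d^m}\}$ and the family $\{H_\tau\}$ to $\{E_\tau\}$.

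The first of these is immediate from the definition of $\Omega$, so the first sentence of the corollary follows directly: since $\{H_{d^m}\}_{d,m \geq 1}$ is an algebraic basis of $\PS$ and $\Omega(H_{d^m}) = E_{d^m}$, applying the automorphism $\Omega$ shows that $\{E_{d^m}\}_{d,m \geq 1}$ is also an algebraic basis.

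For the second sentence, the key observation is that by the definitions in Definition~\ref{defn:basesforplambda}, both $H_\tau$ and $E_\tau$ are defined as the product over parts $d^m \in \tau$ of the corresponding $H_{d^m}$ or $E_{d^m}$. Since $\Omega$ is a ring homomorphism,
\begin{equation}
\Omega(H_\tau) = \prod_{d^m \in \tau} \Omega(H_{d^m}) = \prod_{d^m \in \tau} E_{d^m} = E_\tau.
\end{equation}
Then because $\{H_\tau\}_{\tau\text{ type}}$ is a linear basis of $\PS$ by Theorem~\ref{thm: H, E algebraic basis} and $\Omega$ is a bijection, its image $\{E_\tau\}_{\tau\text{ type}}$ is also a linear basis.

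There is no substantive obstacle here; the work was already done in Proposition~\ref{prop:omegainvolution} (that $\Omega$ is well-defined as an involution) and in Theorem~\ref{thm: H, E algebraic basis}. The only small point to be mindful of is that the products defining $H_\tau$ and $E_\tau$ from their one-index pieces $H_{d^m}$, $E_{d^m}$ match up under $\Omega$; this is immediate from multiplicativity and does not require $\Omega$ to commute with Adams operations.
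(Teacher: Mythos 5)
Your proof is correct and follows the route the paper clearly intends: the corollary is stated immediately after Proposition~\ref{prop:omegainvolution}, and the argument is exactly to transport the basis properties of the $H$-families along the ring involution $\Omega$, using multiplicativity ($E_\tau = \prod_{d^m\in\tau}E_{d^m}$ and $H_\tau = \prod_{d^m\in\tau}H_{d^m}$, so $\Omega(H_\tau) = E_\tau$). Nothing is missing, and your side remark that commutation with Adams operations is not needed here is right.
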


\subsection{A particular specialization} 

Let $R$ denote the ring $\Lambda_\QQ$ 
equipped with the unique $\lambda$-structure 
for which it has \emph{trivial} Adams operations 
(this differs from its standard $\lambda$-structure). 
There is a unique $\lambda$-homomorphism 
satisfying 
\begin{align} 
\Phi \colon \PS &\to R\\
M_d&\mapsto q_d
\end{align} 
where $q_1,q_2,q_3,\ldots \in \Lambda_\QQ$ 
are defined by\footnote{ 
One can show that $\{q_d\}_{d=1}^\infty$ is 
an algebraic basis of $\Lambda_\QQ$. 
These symmetric functions seem to have received little attention 
from the literature.} 
\begin{equation}\label{defn:qd} 
\sum_{d=0}^\infty {h_d t^d} = 
\prod_{d=1}^\infty (1-t^d)^{-q_d}.
\end{equation} 

\begin{proposition}\label{prop:phiimagesHP}
$\Phi(H_{k^m}) = h_k$ and 
$\Phi(P_{k^m}) = p_k$ 
for any $k,m \geq 1$. 
\end{proposition}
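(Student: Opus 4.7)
The plan is to reduce both claims to the case $m=1$ using the fact that $\Phi$ commutes with Adams operations, and then verify the $m=1$ case by applying $\Phi$ to two of the identities from the identity list in \S\ref{sec:canonicalbases}.

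Since $\Phi$ is a $\lambda$-homomorphism, it commutes with the $\lambda$-operations and therefore with the Adams operations (which, by Newton's identities, are polynomial expressions in the $\lambda_k$). By construction, the target ring $R=\Lambda_\QQ$ has trivial Adams operations. Consequently
\begin{equation*}
\Phi(H_{k^m}) = \Phi(\psi_m H_k) = \psi_m \Phi(H_k) = \Phi(H_k),
\qquad \Phi(P_{k^m}) = \psi_m \Phi(P_k) = \Phi(P_k),
\end{equation*}
so it suffices to show $\Phi(H_k) = h_k$ and $\Phi(P_k) = p_k$.

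For the first identity, I would apply $\Phi$ to
$\sum_{d \geq 0} H_d t^d = \prod_{n \geq 1} \sigma_{t^n}(M_n)$.
Because $\Phi$ commutes with all $\lambda$-operations (hence with $\sigma_t$), the right-hand side becomes $\prod_n \sigma_{t^n}(q_n)$. Since $R$ has trivial Adams operations and is a $\QQ$-algebra, it is a binomial ring (cf.~\S\ref{sec:binomialrings}), so $\lambda_k(r) = \binom{r}{k}$ and therefore
\begin{equation*}
\lambda_{-t}(r) = \sum_{k \geq 0}\binom{r}{k}(-t)^k = (1-t)^r,
\qquad \sigma_t(r) = (1-t)^{-r}.
\end{equation*}
Substituting $r=q_n$ and $t \mapsto t^n$ yields $\sum_d \Phi(H_d) t^d = \prod_n (1-t^n)^{-q_n}$, which coincides with $\sum_d h_d t^d$ by the defining identity \eqref{defn:qd} for the $q_d$. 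Matching coefficients gives $\Phi(H_d) = h_d$.

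For the second identity, I would apply $\Phi$ to the listed identity $P_d = \sum_{k\mid d} k \psi_{d/k} M_k$; using triviality of Adams operations on $R$, this gives $\Phi(P_d) = \sum_{k \mid d} k q_k$. To identify this with $p_d$, take the logarithm of $\sum_d h_d t^d = \prod_n (1-t^n)^{-q_n}$: the left-hand side equals $\sum_k p_k t^k/k$ (by $\sum h_d t^d = \exp \sum p_k t^k/k$), while the right-hand side expands as $\sum_n q_n \sum_r t^{nr}/r$. Extracting the coefficient of $t^d/d$ yields $p_d = \sum_{n \mid d} n q_n = \Phi(P_d)$, completing the proof. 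The only nontrivial step is the computation $\sigma_t(r) = (1-t)^{-r}$ in a binomial ring; everything else is bookkeeping with the generating identities already established.
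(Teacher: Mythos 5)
Your proof is correct and follows the paper's approach for the first identity: reduce to $m=1$ by commuting $\Phi$ past the Adams operations, then apply $\Phi$ to $\sum_d H_d t^d = \prod_n \sigma_{t^n}(M_n)$ and compare with the defining identity for $q_d$. Your packaging of the key computation as $\sigma_t(r) = (1-t)^{-r}$ via the binomial ring structure on $R$ is equivalent to the paper's direct expansion $\sigma_{t^d}(M_d) = \exp\bigl(\sum_k \psi_k M_d\,t^{kd}/k\bigr) \mapsto \exp\bigl(\sum_k q_d\, t^{kd}/k\bigr) = (1-t^d)^{-q_d}$. For the second identity you take a slightly more roundabout route: the paper simply applies $\Phi$ to $\sum_d H_d t^d = \exp(\sum_d P_d t^d/d)$ and matches against $\sum_d h_d t^d = \exp(\sum_d p_d t^d/d)$ using the already-proved $\Phi(H_d)=h_d$, whereas you first compute $\Phi(P_d) = \sum_{k\mid d} k q_k$ from the identity $P_d = \sum_{k\mid d}k\,\psi_{d/k}M_k$ and then independently verify $p_d = \sum_{n\mid d} n q_n$ by taking the logarithm of \eqref{defn:qd}. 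Both are valid; the paper's version avoids re-deriving the $p_d$--$q_n$ relation, which you end up computing by hand, but the difference is only bookkeeping.
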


\begin{proof}
As $R$ has trivial Adams operations 
and $\Phi$ is a $\lambda$-homomorphism 
(thus commuting with Adams operations) 
it suffices to consider the case when $m = 1$. 
Applying $\Phi$ to 
\begin{equation} 
\sum_{d=0}^\infty {H_d t^d} = 
\prod_{d=1}^\infty \sigma_{t^d}(M_d) = 
\prod_{d=1}^\infty 
\exp\left( 
    \sum_{k=1}^\infty 
    \psi_k M_d \frac{t^{kd}}{k}
\right)
\end{equation} 
results in 
\begin{equation} 
\sum_{d=0}^\infty {\Phi(H_d) t^d} = 
\prod_{d=1}^\infty 
\exp\left( 
    \sum_{k=1}^\infty 
    q_d \frac{t^{kd}}{k}
\right)
=
\prod_{d=1}^\infty (1-t^d)^{-q_d}
\end{equation} 
which proves $\Phi(H_d) = h_d$. 
Now applying $\Phi$ to 
\begin{equation} 
1 + H_1 t + H_2 t^2 + H_3 t^3+ \cdots = 
\exp(P_1 + P_2\frac{t^2}{2}+P_3\frac{t^3}{3}+\cdots) 
\end{equation} 
results in 
\begin{equation} 
1 + h_1 t + h_2 t^2 + h_3 t^3+ \cdots = 
\exp(\Phi(P_1) + \Phi(P_2)\frac{t^2}{2}+\Phi(P_3)\frac{t^3}{3}+\cdots), 
\end{equation} 
which shows that $\Phi(P_d) = p_d$ 
as the left-hand side is 
$\exp(p_1 + p_2\frac{t^2}{2}+p_3\frac{t^3}{3}+\cdots)$. 
\end{proof}



\section{Transition matrices}\label{sec:matrixcoeffs} 

Now we turn to the relationships between polysymmetric bases. 
It is well-known that the matrix coefficients between 
the classical bases of symmetric functions 
admit combinatorial interpretations. 
These generalize naturally to the polysymmetric setting. 

\subsubsection{Duality} 
There is a non-classical duality on types: 
$$\vec{b}^{\vec{m}} \leftrightarrow \vec{m}^{\vec{b}}.$$ 
Although this duality has no analogue for partitions, 
this turns out to be responsible 
for some classical facts about symmetric functions 
(see (1), (4) and (5) below and 
their generalizations 
(i), (iv), (v) proven using the duality of types). 

\subsection{Polysymmetric extensions of some classical constructions}\label{sec:polysymmetricconstructions} 

Let $\lambda$ and $\mu$ denote partitions of degree $d$. 
Let us recall a few classical constructions: 
\begin{itemize}
\item[$\diamond$] 
the {bilinear form} 
        $\langle \cdot, \cdot \rangle 
        \colon \Lambda \otimes \Lambda \to \ZZ$ 
defined by 
    \[\langle m_\lambda,h_\mu \rangle_\Lambda = \begin{cases}
1 &\text{ if } \lambda = \mu,\\
0 &\text{ otherwise,} 
\end{cases}\]
\item[$\diamond$] 
the numbers $M_{\lambda \mu}$ defined by the transition $e \to m$ 
\begin{equation} 
e_\lambda = 
\sum_{\mu \vdash d}
M_{\lambda \mu}
m_\mu, 
\end{equation} 
\item[$\diamond$] 
the numbers $N_{\lambda \mu}$ defined by the transition $h \to m$ 
\begin{equation} 
h_\lambda = 
\sum_{\mu \vdash d}
N_{\lambda \mu}
m_\mu.
\end{equation} 
\end{itemize}
Let $\vec{\lambda}$ denote 
    the list of parts of $\lambda$ in weakly descending order. 
These constructions satisfy 
    the following properties \cite[\S7]{Stanley}: 
\begin{enumerate} 
\item the bilinear form $\langle \cdot , \cdot \rangle_\Lambda$ 
    is symmetric,  
\item $M_{\lambda \mu}$ equals the number of $\{0,1\}$-matrices $A$ 
    satisfying 
    $A\vec{1} = \vec{\lambda}$ and $A^T \vec{1} = \vec{\mu}$, 
\item $N_{\lambda \mu}$ equals the number of 
    $\ZZgeq$-matrices $A$ satisfying 
    $A\vec{1} = \vec{\lambda}$ and $A^T \vec{1} = \vec{\mu}$, 
\item $M_{\lambda \mu} = M_{\mu \lambda}$, 
\item $N_{\lambda \mu} = N_{\mu \lambda}$. 
\end{enumerate} 

Now we turn to the polysymmetric setting. 
Define a bilinear form by declaring 
\[\langle M_\lambda,H_\tau \rangle = \begin{cases}
1 &\text{if $\tau^T = \lambda$,} \\
0 &\text{otherwise}. 
\end{cases}\]
For any type $\lambda \vDash d$ we have 
\begin{equation}\label{eqn:defncsf} 
E_\lambda^+ = \sum_{\tau \vDash d} \arrsqf(\tau,\lambda) M_\tau
\end{equation} 
for uniquely determined integers $\arrsqf(\tau,\lambda)$. 
Similarly, 
\begin{equation}\label{eqn:defnc} 
H_\lambda = \sum_{\tau \vDash d} \arr(\tau,\lambda) M_\tau
\end{equation} 
for uniquely determined integers $\arr(\tau,\lambda)$. 
These transition matrices 
    $\arr$ and $\arrsqf$ extend $N$ and $M$, 
    in the sense that 
    for partitions $\lambda$ and $\mu$,  
\begin{equation} 
\arrsqf(\vec{1}^{\vec{\lambda}}, \vec{\mu}^{\vec{1}})
= 
M_{\lambda \mu}
\quad
\text{and}
\quad
\arr(\vec{1}^{\vec{\lambda}}, \vec{\mu}^{\vec{1}})
= 
N_{\lambda \mu}. 
\end{equation} 
The bilinear form $\langle \cdot,\cdot\rangle_\Lambda$ 
is recovered by restricting 
the polysymmetric bilinear form $\langle \cdot,\cdot\rangle$ 
to the two copies $\Lambda_1$ and $\Lambda_2$ in $\PS$ 
given by 
\begin{equation} 
\Lambda_1 \hookrightarrow \PS: 
    m_\lambda \mapsto M_{\vec{1}^{\vec{\lambda}}}
\quad\text{and}\quad
\Lambda_2 \hookrightarrow \PS: 
    m_\lambda \mapsto M_{\vec{\lambda}^{\vec{1}}}. 
\end{equation} 

\begin{theorem}\label{thm:arrangementsandcoefficients}
Let $\tau=\vec{b}^{\vec{m}}$ and $\lambda=\vec{c}^{\vec{n}}$ 
be types of degree $d$. 
\begin{enumerate} 
\item[(i)] The bilinear form $\langle \cdot , \cdot \rangle$ 
is symmetric,  
\item[(ii)] $\arrsqf(\tau,\lambda)$ 
equals the number of $\{0,1\}$-matrices $A$ satisfying 
\begin{equation}\label{eqn:arrdefnmatrices}
A\vec{n} = \vec{m} \quad\text{and}\quad A^T \vec{b} = \vec{c}, 
\end{equation}
\item[(iii)] $\arr(\tau,\lambda)$ 
equals the number of $\ZZgeq$-matrices $A$ 
satisfying \eqref{eqn:arrdefnmatrices},  
\item[(iv)] $\arrsqf(\tau,\lambda) = \arrsqf(\lambda^T,\tau^T)$, and 
\item[(v)] $\arr(\tau,\lambda) = \arr(\lambda^T,\tau^T)$. 
\end{enumerate} 
These generalize properties (1)-(5), respectively.  
\end{theorem}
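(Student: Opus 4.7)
The plan is to establish the combinatorial interpretations (ii) and (iii) by expanding $H_\lambda$ and $E^+_\lambda$ in the monomial basis, to deduce (iv) and (v) via a matrix transposition bijection, and finally to derive (i) from (v) by formal manipulation of the pairing.

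For (iii), I would write $\lambda = (c_1^{n_1}\cdots c_s^{n_s})$ and expand
\[ H_\lambda = \prod_{j=1}^s H_{c_j^{n_j}} = \prod_{j=1}^s \psi_{n_j} H_{c_j} . \]
A term in this product is obtained by choosing, for each $j$, a monomial $g_j$ of degree $c_j$ in $H_{c_j}$ and taking $g_j(x_{\ast\ast}^{n_j})$. Since $H_\lambda$ is $\IPG$-invariant, the coefficient of $M_\tau$ in $H_\lambda$ equals the coefficient of any single representative monomial of type $\tau = (b_1^{m_1}\cdots b_r^{m_r})$, which we may take to be $x_{b_1,1}^{m_1}\cdots x_{b_r,r}^{m_r}$. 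A choice of $(g_1,\ldots,g_s)$ contributes to this monomial exactly when each $g_j$ is supported on the variables $x_{b_1,1},\ldots,x_{b_r,r}$; encode such a choice by a nonnegative integer matrix $B = (B_{kj})$ where $B_{kj}$ is the exponent of $x_{b_k,k}$ in $g_j$. Matching total exponents at each chosen variable gives $\sum_j B_{kj} n_j = m_k$, i.e.\ $B\vec{n}=\vec{m}$, and the requirement that $g_j$ have degree $c_j$ gives $\sum_k b_k B_{kj} = c_j$, i.e.\ $B^T \vec{b} = \vec{c}$. This exhibits the bijection required for (iii).

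For (ii), the identical argument applied to $E^+_\lambda = \prod_j \psi_{n_j} E^+_{c_j}$ restricts the entries of $B$ to $\{0,1\}$, since $E^+_{c_j}$ is the sum of squarefree monomials of degree $c_j$. For (v), note that $\tau^t = \vec{m}^{\vec{b}}$ and $\lambda^t = \vec{n}^{\vec{c}}$, so by (iii) the quantity $\arr(\lambda^t, \tau^t)$ counts nonnegative matrices $B'$ satisfying $B'\vec{b} = \vec{c}$ and $(B')^T \vec{n} = \vec{m}$; the map $B \mapsto B^T$ provides the required bijection with the matrices counted by $\arr(\tau, \lambda)$, and the same argument proves (iv). Finally, extending the pairing bilinearly yields $\langle H_\mu, H_\nu\rangle = \arr(\nu^t, \mu)$ for any types $\mu, \nu$, so the symmetry of the form reduces to $\arr(\nu^t, \mu) = \arr(\mu^t, \nu)$, which is (v) applied with $(\tau, \lambda) = (\nu^t, \mu)$.

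The main obstacle is the combinatorial bookkeeping in the first step: keeping straight the normalization by $\psi_{n_j}$ within each factor so that the degree equation for $g_j$ translates into the clean equation $B^T \vec{b} = \vec{c}$, and correctly using $\IPG$-invariance to reduce the coefficient extraction to a single representative monomial. Once these are handled, the remaining implications are purely formal.
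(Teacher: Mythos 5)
Your proof is correct and takes essentially the same approach as the paper, which expands $H_\lambda = \prod_j \psi_{n_j} H_{c_j}$, extracts the coefficient of a representative monomial of type $\tau$, encodes each term of the expansion as a nonnegative integral matrix whose row and column constraints are exactly $A\vec{n}=\vec{m}$ and $A^T\vec{b}=\vec{c}$, and then obtains (iv), (v) via transposition and (i) via the identity $\langle H_\mu, H_\nu\rangle = \arr(\nu^t,\mu)$. The only cosmetic difference is that the paper interposes a named intermediate notion (``factorizations'' of a monomial of type $\tau$, Propositions~\ref{prop:arrangementscnumbers} and~\ref{prop:arrangementsmatrices}), which you fold directly into a single coefficient-to-matrix bijection, using that for a monomial $g$ one has $\psi_{n}g = g(x_{\ast\ast}^n) = g^n$.
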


Motivated by this, we define an 
\defn{arrangement of $\vec{b}^{\vec{m}}$ into $\vec{c}^{\vec{n}}$} 
to be a $\ZZgeq$-matrix $A$ satisfying 
$A\vec{n} = \vec{m}$ and $A^T \vec{b} = \vec{c}$. 
An arrangement is \defn{squarefree} if its entries are in $\{0,1\}$. 

\begin{remark}
Transition matrices between the 
    $HE^+EP$ bases 
    and pure tensor polysymmetric bases 
    formed from symmetric bases 
    have been studied in 
    \cite{khanna2024transitionmatricespierityperules}. 
\end{remark}


\subsection{Arrangements record factorizations}\label{sec:arrsandfacts} 

To prove Theorem~\ref{thm:arrangementsandcoefficients} 
we show that arrangements 
record factorizations of monomials in weighted variables. 
Let $f_\tau(x_{\ast\ast})$ be a monomial of type $\tau$. 
If $m_1,\ldots,m_r$ are positive integers and 
$f_{1}^{m_1}\cdots f_{r}^{m_r} = f_\tau$ 
then we say that $(f_{1},\ldots,f_{r};m_1,\ldots,m_r)$ 
is an \defn{factorization of $f_\tau$ of type $\lambda$} 
where $\lambda$ is the type 
$\{(\wt f_1,m_1), \ldots, (\wt f_r, m_r)\}$. 

Assertions (ii) and (iii) of 
Theorem~\ref{thm:arrangementsandcoefficients} 
follow from the next two propositions. 

\begin{proposition}\label{prop:arrangementscnumbers} 
Let $\tau,\lambda \vDash d$ and 
let $f_\tau$ be a monomial of type $\tau$. 
The number of factorizations 
of $f_\tau$ of type $\lambda$ is equal to $\arr(\tau,\lambda)$. 
The number of factorizations 
of $f_\tau$ of type $\lambda$ 
using only squarefree monomials $f_1,\ldots,f_r$ 
is equal to $\arrsqf(\tau,\lambda)$. 
In particular, both quantities are independent of 
the chosen monomial $f_\tau$. 
\end{proposition}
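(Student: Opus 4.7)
The plan is to prove both assertions by unpacking the definitions of $H_\lambda$ and $E^+_\lambda$ and reading off the coefficient of a monomial directly. The independence of the chosen $f_\tau$ will then fall out for free from $\IPG$-invariance, so the two claims of the proposition are really established simultaneously.

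First I would expand $H_\lambda$. Writing $\lambda = \{(d_1,m_1),\ldots,(d_r,m_r)\}$, Definition~\ref{defn:basesforplambda} gives
\begin{equation*}
H_\lambda \;=\; \prod_{i=1}^r \psi_{m_i}\! H_{d_i} \;=\; \prod_{i=1}^r \Bigl(\sum_{f_i \text{ monomial}, \ \wt f_i = d_i} f_i^{m_i}\Bigr),
\end{equation*}
and multiplying out presents $H_\lambda$ as a sum of monomials of the form $f_1^{m_1}\cdots f_r^{m_r}$, indexed by tuples $(f_1,\ldots,f_r)$ with $\wt f_i = d_i$. Hence the coefficient of a fixed monomial $f_\tau$ of type $\tau$ in $H_\lambda$ equals the number of such tuples with $f_1^{m_1}\cdots f_r^{m_r}=f_\tau$, which is exactly the number of factorizations of $f_\tau$ of type $\lambda$ in the sense of the preceding definition.

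Second, because $H_\lambda$ lies in $\PS$, it is $\IPG$-invariant; the $\IPG$-orbit of any monomial $f_\tau$ of type $\tau$ is precisely the set of all monomials of type $\tau$. Therefore every monomial of type $\tau$ appears in $H_\lambda$ with the \emph{same} coefficient $c_\tau$. Since $M_\tau$ is the sum over these monomials each with coefficient one (Definition~\ref{defn:monomialpolysymmetricfunction}), comparison with the expansion $H_\lambda=\sum_{\tau'\vDash d}\arr(\tau',\lambda) M_{\tau'}$ of \eqref{eqn:defnc} forces $c_\tau=\arr(\tau,\lambda)$. This gives the stated equality and simultaneously proves independence of the choice of $f_\tau$.

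The squarefree statement is proved by the same argument applied to $E^+_\lambda$, using the factorization $E^+_\lambda=\prod_i \psi_{m_i}\! E^+_{d_i}$ together with the definition of $E^+_{d_i}$ as a sum over squarefree monomials of degree $d_i$; the expansion then indexes tuples $(f_1,\ldots,f_r)$ of \emph{squarefree} monomials, and $\IPG$-invariance of $E^+_\lambda$ together with \eqref{eqn:defncsf} identifies the common coefficient with $\arrsqf(\tau,\lambda)$. There is no serious obstacle in the argument; the whole proof is a definitional unpacking, and the one conceptual ingredient is the observation that $\IPG$-invariance collapses ``compute the coefficient of $M_\tau$'' to ``compute the coefficient of a single representative monomial $f_\tau$.''
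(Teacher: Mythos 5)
Your argument is correct and is essentially the same as the paper's: expand $H_\lambda$ (resp.\ $E_\lambda^+$) as a product of the homogeneous sums, read the coefficient of $f_\tau$ as the count of factorizations, and compare with the expansion in the $M$-basis. The paper elides the explicit $\IPG$-invariance step, instead phrasing it as ``collecting monomials according to their splitting type,'' but the content is identical, and your articulation of the invariance justifying the independence of the choice of $f_\tau$ is a useful clarification rather than a deviation.
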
 

\begin{proof} 
Write $\lambda = \vec{d}^{\vec{m}}$. 
In the expansion of the product 
\begin{equation} 
H_\lambda = 
\left( \sum_{\substack{\text{monomials $f_1$}\\\text{of degree $d_1$}} } 
   f_1^{m_1}
\right)\cdots 
\left( \sum_{\substack{\text{monomials $f_r$}\\\text{of degree $d_r$}} } 
   f_r^{m_r}
\right),
\end{equation} 
a monomial $f_\tau$ of type $\tau$ will appear 
each time $(f_1,\ldots,f_r;m_1,\ldots,m_r)$ is 
a factorization of $f_\tau$ of type $\lambda$. 
Collecting monomials according to their splitting type 
expresses $H_\lambda$ in the monomial polysymmetric basis. 
This proves that 
the number of factorizations of $f_\tau$ of type $\lambda$ 
equals $\arr(\tau,\lambda)$. 
Similarly, the coefficient of $M_\tau$ in the expression 
\begin{equation} 
E_\lambda^+ = 
\left( \sum_{\substack{\text{sq.free monomials}\\\text{$f_1$ of degree $d_1$}} } 
   f_1^{m_1}
\right)\cdots 
\left( \sum_{\substack{\text{sq.free monomials}\\\text{$f_r$ of degree $d_r$}} } 
   f_r^{m_r}
\right)
\end{equation} 
is the number of factorizations of $f_\tau$ of type $\lambda$ 
using only squarefree monomials. 
\end{proof} 

\begin{proposition}\label{prop:arrangementsmatrices} 
Let $\tau = \{(d_1,m_1), \ldots, (d_r, m_r)\}$ and 
$\lambda = \{(e_1,n_1),\ldots,(e_s,n_s)\}$. 
Let $f_\tau = x_{d_1k_1}^{m_1}\cdots x_{d_rk_r}^{m_r}$ 
be a monomial of type $\tau$. 
If $(f_{1},\ldots,f_{s};n_1,\ldots,n_s)$ 
is a factorization of $f_\tau$ of type $\lambda$, 
then the matrix $A$ defined by 
\begin{equation}\label{eqn:matrixtomonomial} 
f_j = x_{d_1k_1}^{A_{1j}}\cdots x_{d_rk_r}^{A_{rj}}
\end{equation} 
for all $j \in \{1,\ldots,s\}$ 
is an arrangement of $\tau$ into $\lambda$. 
This defines a bijection between the following sets: 
\begin{enumerate} 
\item factorizations of $f_\tau$ of type $\lambda$, 
\item arrangements of $\tau$ into $\lambda$. 
\end{enumerate} 
This restricts to a bijection 
between the following subsets: 
\begin{enumerate} 
\item[(1$\,'$)] factorizations of $f_\tau$ of type $\lambda$ 
using only squarefree monomials, 
\item[(2$\,'$)] arrangements of $\tau$ into $\lambda$ 
with entries in $\{0,1\}$. 
\end{enumerate} 
\end{proposition}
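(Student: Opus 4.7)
The plan is to establish the bijection by a direct matrix-variable correspondence. The key observation driving everything is that since the pairs $(d_i, k_i)$ appearing in $f_\tau$ are distinct (by the $r$-minimality in Definition~\ref{defn:monomialpolysymmetricfunction}), any monomial $f_j$ that can appear as a factor of $f_\tau$ must involve only the variables $\{x_{d_1k_1},\ldots,x_{d_rk_r}\}$; otherwise $\prod_j f_j^{n_j}$ would contain a variable not present in $f_\tau$. Consequently, every such $f_j$ admits a unique expression $f_j = \prod_i x_{d_ik_i}^{A_{ij}}$ with $A_{ij}\in\NN$, and the assignment \eqref{eqn:matrixtomonomial} gives a well-defined map from factorizations of $f_\tau$ of type $\lambda$ to non-negative integer $r\times s$ matrices.

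Next I would verify that the image of this map lands in the set of arrangements of $\tau$ into $\lambda$. Writing $\vec{b}=(d_1,\ldots,d_r)$, $\vec{m}=(m_1,\ldots,m_r)$, $\vec{c}=(e_1,\ldots,e_s)$, $\vec{n}=(n_1,\ldots,n_s)$, the condition $\prod_j f_j^{n_j} = f_\tau$ equates the exponent of each variable $x_{d_ik_i}$, yielding $\sum_j A_{ij}n_j = m_i$, i.e.\ $A\vec{n}=\vec{m}$. The condition that the factorization has type $\lambda$, i.e.\ $\wt f_j = e_j$ for each $j$, reads $\sum_i d_i A_{ij} = e_j$, i.e.\ $A^T\vec{b}=\vec{c}$. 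These are exactly the defining equations \eqref{eqn:arrdefnmatrices} of an arrangement.

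Conversely, given an arrangement $A$, I would define $f_j \coloneqq \prod_i x_{d_ik_i}^{A_{ij}}$ and check that $(f_1,\ldots,f_s;n_1,\ldots,n_s)$ is a factorization of $f_\tau$ of type $\lambda$: the relation $A^T\vec{b}=\vec{c}$ gives $\wt f_j = e_j$, and $A\vec{n}=\vec{m}$ gives $\prod_j f_j^{n_j}=f_\tau$. The two constructions are manifestly inverse to each other, since each $f_j$ uniquely determines its exponent vector in the fixed variables $x_{d_ik_i}$ and vice versa. This establishes the bijection between (1) and (2). Finally, the squarefree restriction is immediate: a monomial $f_j = \prod_i x_{d_ik_i}^{A_{ij}}$ is squarefree if and only if every exponent $A_{ij}$ lies in $\{0,1\}$, so the same bijection restricts to one between (1$'$) and (2$'$).

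I do not anticipate a genuine obstacle; the only bookkeeping subtlety is the dependence on a chosen ordering of the parts of $\tau$ and $\lambda$, which is needed to turn multisets into the vectors $\vec{b},\vec{m},\vec{c},\vec{n}$ and to index rows and columns of $A$. Different orderings produce the same counts and merely relabel rows/columns, so the bijection (and hence Theorem~\ref{thm:arrangementsandcoefficients}(ii)--(iii) via Proposition~\ref{prop:arrangementscnumbers}) is well-posed.
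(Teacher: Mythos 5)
Your proposal is correct and matches the paper's proof step for step: both establish well-definedness of the matrix $A$ by uniqueness of exponent vectors, derive the two arrangement equations from the factorization conditions, run the inverse construction, and observe that the squarefree restriction corresponds exactly to $\{0,1\}$ entries. The only thing you add is an explicit remark (correct, and worth saying) that any $f_j$ dividing $f_\tau$ must lie in the span of the variables $x_{d_ik_i}$, which the paper leaves implicit.
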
 

\begin{proof} 
Let $(f_{1},\ldots,f_{s};n_1,\ldots,n_s)$ be 
a factorization of $f_\tau$ of type $\lambda$. 
For each $j \in \{1,\ldots,s\}$ there are uniquely determined 
non-negative integers 
$A_{1j},\ldots,A_{rj}$ satisfying
$$f_j = x_{d_1k_1}^{A_{1j}}\cdots x_{d_rk_r}^{A_{rj}}$$ 
which shows that $A$ is well-defined. 
The equality $f_1^{n_1} \cdots f_s^{n_s} = f_\tau$ implies that 
\begin{equation} 
\sum_{j = 1}^s A_{ij}n_j = m_i
\end{equation} 
for each $i \in \{1,\ldots,r\}$. 
Since $\deg f_j = e_j$, we also have  
\begin{equation} 
\sum_{i = 1}^r A_{ij} d_i = e_j
\end{equation} 
for each $j \in \{1,\ldots,s\}$. 
Conversely, it is clear that 
any arrangement $A$ of $\tau$ into $\lambda$ 
determines monomials $f_1,\ldots,f_s$ 
according to \eqref{eqn:matrixtomonomial} 
which give a factorization 
$(f_{1},\ldots,f_{s};n_1,\ldots,n_s)$ 
of $f_\tau$ of type $\lambda$. 

Restricting the factorization
$(f_{1},\ldots,f_{s};n_1,\ldots,n_s)$ defined by $A$ 
to use squarefree monomials 
is the same as restricting each exponent 
in \eqref{eqn:matrixtomonomial} to be $0$ or $1$. 
\end{proof} 

Now we finish the proof of 
Theorem~\ref{thm:arrangementsandcoefficients}. 

\begin{proof}[Proof of (i), (iv), and (v)]
The equalities in (iv) and (v) are realized by 
a bijection defined by matrix transposition. 
More precisely, 
$A \mapsto A^T$ 
gives a bijection 
from the set of arrangements of $\tau$ into $\lambda$ 
to the set of arrangements of $\lambda^t$ into $\tau^t$. 
This proves (iv) and (v) in view of (ii) and (iii). 
Since the set $\{H_\lambda\}_{\lambda}$ is 
a linear basis of $\PS$  
(Theorem~\ref{thm: H, E algebraic basis}),  
the symmetry of $\langle\cdot,\cdot\rangle$ 
is equivalent to (v). 
\end{proof}

\begin{remark}\label{rmk: category structure on types}
Arrangements may be regarded as morphisms between types 
    where composition is defined by matrix composition. 
This gives a refinement of the partial order 
    as $\tau \leq \lambda \iff \arr_{\tau \lambda} >0$. 
\end{remark}

\subsubsection{Self-arrangements} 

\begin{lemma}\label{lemma:self-arrangementsterminology}
    Let $\tau = (d_1^{m_1} d_2^{m_2}\ldots,d_r^{m_r})$ be a type. 
Let $\alpha$ be a self-arrangement of $\tau$.  
Let $f_\tau = x_{d_1j_1}^{m_1}\cdots x_{d_rj_r}^{m_r}$ 
be a monomial of type $\tau$. 
Then 
$$a_\alpha = (x_{d_{1}j_{\alpha(1)}}, 
x_{d_{2}j_{\alpha(2)}},\ldots,
x_{d_{r}j_{\alpha(r)}};m_1,\ldots,m_r)$$ 
is a factorization of $f_\tau$ of type $\tau$. 
Conversely, every factorization of $f_\tau$ of type $\tau$ 
is of the form $a_\alpha$ for 
a unique self-arrangement $\alpha$ of $\tau$. 
\end{lemma}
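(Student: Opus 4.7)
The plan is to deduce the lemma directly from the bijection of Proposition~\ref{prop:arrangementsmatrices} by restricting to the case $\lambda = \tau$ and composing with the identification of self-arrangements with the pair-preserving subgroup of $\Sigma_r$ mentioned just before the lemma statement. By Proposition~\ref{prop:arrangementsmatrices}, the factorizations of $f_\tau = x_{d_1 j_1}^{m_1}\cdots x_{d_r j_r}^{m_r}$ of type $\tau$ are in bijection with self-arrangements of $\tau$ (i.e.\ non-negative integer matrices $A$ with $A\vec{m}=\vec{m}$ and $A^T\vec{d}=\vec{d}$), the factorization associated to $A$ being given by $f_k = \prod_i x_{d_i j_i}^{A_{ik}}$. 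The paragraph preceding the lemma identifies the set of self-arrangements with the subgroup of $\Sigma_r$ consisting of permutations $\alpha$ satisfying $(d_{\alpha(k)}, m_{\alpha(k)}) = (d_k, m_k)$ for all $k$; explicitly, such an $\alpha$ corresponds to the permutation matrix $A^{(\alpha)}_{ik} = \delta_{i,\alpha(k)}$, and a short direct check shows that the pair-preserving condition on $\alpha$ is exactly what ensures $A^{(\alpha)}\vec{m}=\vec{m}$ and $(A^{(\alpha)})^T\vec{d}=\vec{d}$.

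Then I would substitute the permutation matrix $A^{(\alpha)}$ into the formula of Proposition~\ref{prop:arrangementsmatrices} to compute the corresponding factorization: $f_k = \prod_i x_{d_i j_i}^{\delta_{i,\alpha(k)}} = x_{d_{\alpha(k)} j_{\alpha(k)}}$. Applying the identity $d_{\alpha(k)} = d_k$, this simplifies to $f_k = x_{d_k j_{\alpha(k)}}$, which is exactly the $k$th entry of $a_\alpha$ as defined in the lemma. This proves the first assertion, that $a_\alpha$ is a factorization of $f_\tau$ of type $\tau$.

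For the converse, the composite of the two bijections just described --- pair-preserving permutations $\alpha$ to self-arrangements $A^{(\alpha)}$, then self-arrangements to factorizations via Proposition~\ref{prop:arrangementsmatrices} --- sends $\alpha \mapsto a_\alpha$, and is itself a bijection between pair-preserving permutations in $\Sigma_r$ and factorizations of $f_\tau$ of type $\tau$. This delivers both existence and uniqueness of the representation of any such factorization as $a_\alpha$. The only real subtlety is maintaining the two indexing conventions in parallel --- $\alpha$ as a permutation of tuple positions versus $A^{(\alpha)}$ as a matrix entry --- so the main bookkeeping task is the verification that $A^{(\alpha)}$ is a valid self-arrangement precisely when $\alpha$ preserves pairs, which is a routine calculation using the formulas above.
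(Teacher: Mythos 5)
Your proposal is essentially the paper's approach: both directions reduce, via Proposition~\ref{prop:arrangementsmatrices}, to the bijection between factorizations of $f_\tau$ of type $\tau$ and self-arrangements of $\tau$, and you correctly compute that the permutation matrix $A^{(\alpha)}_{ik}=\delta_{i,\alpha(k)}$ yields $f_k = x_{d_{\alpha(k)}j_{\alpha(k)}} = x_{d_k j_{\alpha(k)}}$, recovering $a_\alpha$. However, the converse has a soft spot you should tighten. You lean on the prose remark preceding the lemma ("the set of self-arrangements of $\tau$ is a subgroup of $\Sigma_{\len\tau}$") to conclude that \emph{every} self-arrangement is a permutation matrix arising from a pair-preserving permutation. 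That remark is asserted without proof in the text, and the nontrivial content here is precisely establishing it: you must show that any non-negative integer matrix $A$ with $A\vec{m}=\vec{m}$ and $A^T\vec{d}=\vec{d}$, where $\vec{m},\vec{d}$ have strictly positive entries, is necessarily a permutation matrix. The paper supplies this as a separate auxiliary fact (Lemma~\ref{lemma:elementarypermutationmatrix}, immediately following), proved by multiplying $(A-I)\vec{m}=0$ on the left by the all-ones row vector to force each column of $A$ to sum to $1$, and symmetrically for rows. Without that step, your converse does not rule out non-permutation self-arrangements, so the bijection you compose is not yet justified; once you insert that short positivity argument, your proof matches the paper's.
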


\begin{proof} 
It is immediate to see that $a_\alpha$ 
is a factorization of $f_\tau$ of type $\tau$. 
Conversely, let $a$ be a factorization of $f_\tau$ of type $\tau$, 
and let $A$ be the arrangement of $\tau$ into $\tau$ 
corresponding to $a$ by 
    Proposition~\ref{prop:arrangementsmatrices}. 
Then $A$ is an $r \times r$ matrix with 
non-negative integer entries, and 
    $d$ and $m$ are positive vectors 
    such that  
    $m = Am$ and $d = A^Td$. 
We claim that this forces $A$ to be a permutation matrix. 
Multiplying $(A-1)m = 0$ on the left by 
the row vector $(1,1,\ldots,1)$ shows that 
$m_1(A_{1,1}+\cdots+A_{r,1}-1) + \cdots + 
m_r(A_{1,r}+\cdots+A_{r,r}-1) = 0$. 
By positivity of $m$, each column of $A$ must sum to $1$. 
Similarly, $(A^T-1)d = 0$ shows each row of $A$ sums to $1$, 
so $A$ is a permutation matrix. 
It is easy to see that 
$A$ preserves the degrees and multiplicities of $\tau$. 
\end{proof} 


\subsection{The `merge and forget' partial order}\label{sec:mergingforgetting} 

We introduce a partial order on the set of types. 
First we define two operations on types. \par
\begin{itemize}
\item[$\diamond$]  
    Suppose that two elements of a type 
        $\tau=\{(d_1,m_1),\ldots,(d_r,m_r)\}$, 
        say $(d_1,m_1)$ and $(d_2,m_2)$, 
        have the same multiplicity, i.e. $m_1 = m_2$. 
We say that the type 
$$M(\tau)=\{(d_1+d_2,m_1),(d_3,m_3),\ldots,(d_r,m_r)\}$$ 
    is obtained from $\tau$ 
    by an \defn{elementary merge} $M$.
\vspace{.25em}
\item[$\diamond$] Let an element of $\tau$ be chosen, 
say $(d_1,m_1)$. 
Let $a$ be a positive integer which is 
    strictly less than $m_1$. 
We say that the type 
    $$F(\tau) = \{(d_1,m_1-a),(d_1,a),(d_2,m_2),\ldots,(d_r,m_r)\}$$ 
is obtained from $\tau$ 
    by an \defn{elementary forget} $F$. 
\end{itemize}
A composition of elementary merges (resp. forgets) 
is called a merge (resp. forget). 
These operations are dual under $\tau \mapsto \tau^t$. 
Let $\PT{d}$ denote 
the set of types of degree $d$ equipped with 
the partial order 
generated by merges and forgets, 
and let $\IT{d}$ denote its incidence algebra over $\QQ$. 

\begin{proposition}\label{prop:arrangementforgetmerge}\,
    \begin{enumerate}
        \item Elementary merges 
    $M \colon \tau \to \lambda = M(\tau)$ 
    correspond to squarefree arrangements 
    $A \colon \tau \to \lambda$ 
    with exactly one $1$ in each column. 
    \item Elementary forgets 
    $F \colon \tau \to \lambda = F(\tau)$ 
    correspond to squarefree arrangements 
    $A \colon \tau \to \lambda$ 
    with exactly one $1$ in each row. 
\item Any arrangement is equal to a forget followed by a merge. 
\item The partial order on $T_d$ is equivalent to 
    the partial order obtained 
    by declaring $\tau \leq \lambda \iff a(\tau,\lambda)>0$. 
    In particular, the transition matrices 
        $\arr$ and $\arrsqf$ are in 
        the incidence algebra $\IT{d}$. 
\item The transition matrices $a$ and $e$ are invertible in 
    the incidence algebra $I_d$. 
    \end{enumerate}
\end{proposition}

For the proof we apply 
merge and forget operations to factorizations. 
If $\alpha$ is a factorization of type $\vec{b}^{\vec{m}}$, 
then any merge or forget 
which is admissible for $\vec{b}^{\vec{m}}$ 
may be applied to $\alpha$ in the obvious manner 
(for instance, 
we may merge the first two parts of 
$\alpha=(f_1,f_2,f_3;m,m,n)$ to get 
$M(\alpha)=(f_1f_2,f_3;m,n)$. 

\begin{proof} 
The first two assertions are immediate from the definitions. 
The fourth is a consequence of the third assertion. 
For the third assertion, let 
    $f = x_{d_1k_1}^{m_1}\cdots x_{d_rk_r}^{m_r}$ 
    be a monomial of type $\tau$ 
and let $\beta=(f_1,\ldots,f_s;n_1,\ldots,n_s)$ 
be a factorization of $f$ of type $\lambda$. 
Let $\alpha_0$ denote the factorization 
$(x_{d_1k_1},\ldots,x_{d_rk_r};m_1,\ldots,{m_r})$ 
of $f$ of type $\tau$. 
If $n_1$ is less than 
the multiplicities of every part in $\alpha_0$ 
corresponding to a variable appearing in the monomial $f_1$, 
then one can apply elementary forgets to $\alpha_0$ 
until each variable in 
$f_1$ corresponds to some part in 
the new factorization $\alpha_1$ with multiplicity $n_1$. 
Replacing $\alpha_0$ with $\alpha_1$, 
we repeat the above process for each $i \in \{2,\ldots,r\}$. 
In the final factorization $\alpha_r$ 
each variable appearing in each $f_i$ 
corresponds to some part in $\alpha_r$ 
with exact multiplicity $n_i$,  
and one can now merge parts of $\alpha_r$ 
to obtain $\beta$. 

For the last assertion, we use that 
that an element $f \in I_{d}$ is invertible 
if and only if 
$f(\tau,\tau) \neq$ for all $\tau$ 
(to see this one chooses 
    an upper-triangular matrix representation of $I_{d}$). 
The number of self-arrangements of a type $\tau$ is 
$\arr(\tau,\tau) = \arrsqf(\tau,\tau) = 
\prod_{k,m}\tau[k^m]! \neq 0$. 
\end{proof} 


\begin{figure} 
\begin{minipage}{0.3\textwidth}	
\[
\adjustbox{scale=.7,center}{%
\begin{tikzcd}[every arrow/.append style={dash}]
              &  & (3)                 \\
(2)               &  & (2 1) \ar[u]    \\
(1 1) \ar[u] &  & (1 1 1) \arrow[u] \\
(1^2) \ar[u]   &  & (1^2 1) \arrow[u]  \\
              &  & (1^3) \ar[u]     \\
\end{tikzcd}}\]
\end{minipage}
\begin{minipage}{0.3\textwidth}	
\[
\adjustbox{scale=.7,center}{%
\begin{tikzcd}[column sep=.25cm,row sep=.4cm,every arrow/.append style={dash}]
                & (4)                              &                                  \\
(3 1) \ar[ru]   &                                  & (2 2) \ar[lu]                \\
                & (2 1 1) \ar[ru] \ar[lu]  &                                  \\
(2 1^2) \ar[ru] & (1 1 1 1) \ar[u]           & (2^2) \ar[uu]                 \\
                & (1^2 1 1) \ar[u] \ar[lu] &                                  \\
(1^3 1) \ar[ru] &                                  & (1^2 1^2) \ar[lu] \ar[uu] \\
                & (1^4) \ar[ru] \ar[lu]      &                                  \\
\end{tikzcd}}\]
\end{minipage}
\begin{minipage}{0.3\textwidth}	
\[
\adjustbox{scale=.7,center}{%
\begin{tikzcd}[column sep=.25cm,row sep=.4cm,every arrow/.append style={dash}]
           &  (5)    &\\
(41) \ar[ur] &                               & (32) \ar[ul]\\
(221) \ar[urr] \ar[u] &                 & (311) \ar[u] \ar[ull] \\
&        (2111) \ar[ur]\ar[ul] &(31^2) \ar[u] \\
(2^21) \ar[uu]  &        (11111) \ar[u]     &(21^21) \ar[u] \ar[ul] \\
&        (1^2111)\ar[u]\ar[ur]      &(21^3) \ar[u] \\
(1^21^21) \ar[ur]\ar[uu]&                               & (1^311)\ar[u]\ar[ul]\\
(1^41) \ar[urr]\ar[u]&                               & (1^31^2)\ar[u]\ar[ull]\\
                       &  (1^5) \ar[ru]\ar[lu]   &\\
\end{tikzcd}}\]
\end{minipage}
\caption{The partially ordered sets of types of degrees $2,3,4,5$.}
\end{figure} 


\begin{remark} 
$\arr(\tau,\tau) = \arrsqf(\tau,\tau) = 
    \prod_{k,m}\tau[k^m]!$ 
implies $\arr^{-1}$ and $\arrsqf^{-1}$ 
are valued in $\ZZ[\frac{1}{d!}]$. 
\end{remark} 



\subsection{Inversion on the top stratum}\label{sec:inversiontopstratum} 

In this section we use higher plethysm 
to deduce an explicit formula
for inverse arrangement numbers $\arr^{-1}_{\tau d}$ 
on the top stratum. 
Let us call $\tau$ \defn{$m$-pure} 
if every part of $\tau$ has multiplicity $m$, 
and \defn{mixed} otherwise. 

\begin{theorem}\label{thm:formulatopstratum}
Let $\tau$ be a type of degree $d$. 
\begin{enumerate} 
\item $\arr^{-1}_{\tau d}$ vanishes if $\tau$ is mixed. 
\item If $\tau = (b_1^m \cdots b_r^m)$ is $m$-pure, then  
\begin{equation}
\arr^{-1}_{\tau d} = 
    \frac{\mu(m)}{m} \frac{(-1)^{r-1}}{r} 
    \frac{r!}{\tau[1^m]!\cdots \tau[(d/m)^m]!} . 
\end{equation} 
\end{enumerate} 
\end{theorem}

\begin{proof}
We begin with the first assertion. 
Recall that $\PS_\QQ$ has the linear basis $\{H_\tau\}_\tau$. 
Define the linear projection operator $\Theta \colon \PS_\QQ \to \PS_\QQ$ given by 
\begin{equation} 
\Theta(H_\tau)  = 
\begin{cases}
    H_\tau &\text{if $\tau$ is pure of some multiplicity,}\\
    0 &\text{otherwise.}
\end{cases}
\end{equation} 
Recall that 
$1 + H_1 t + H_2 t^2 + H_3 t^3+ \cdots = 
\exp(P_1 + P_2\frac{t^2}{2}+P_3\frac{t^3}{3}+\cdots)$. 
Applying the $m$th Adams operation acting trivially on $t$ 
and taking a logarithm results in 
\begin{equation}\label{eqn:polysymmetrichomogpower} 
\log(1 + H_{1^m} t + H_{2^m} t^2 + H_{3^m} t^3+ \cdots) = 
P_{1^m} + P_{2^m}\frac{t^2}{2}+P_{3^m}\frac{t^3}{3}+\cdots. 
\end{equation} 
The coefficient of any power of $t$ on the left-hand side 
is of the form $H_\tau$ for a pure type $\tau$ 
so both sides are fixed by $\Theta$. 
We conclude that $\Theta(P_{k^m}) = P_{k^m}$ 
for all $k$ and $m$. 
Therefore 
\begin{equation} 
\Theta(M_d) = 
\Theta\left( 
\frac{1}{d} \sum_{k|d} \mu(\tfrac{d}{k}) P_{k^{d/k}}
\right)
=
\frac{1}{d} \sum_{k|d} \mu(\tfrac{d}{k}) P_{k^{d/k}}
=M_d
\end{equation} 
and then 
\begin{equation} 
\sum_{\tau \leq d} \arr^{-1}_{\tau d}
H_\tau  =  
M_d = \Theta(M_d) = 
\sum_{\substack{\tau \leq d\\\tau \text{ pure}}} \arr^{-1}_{\tau d}
H_\tau . 
\end{equation} 
The set $\{H_\tau\}_\tau$ is linearly independent, 
so this shows that $\arr^{-1}_{\tau d} = 0$ 
if $\tau$ is mixed. 

Now we prove the second assertion. 
Let $R$ denote the $\lambda$-ring $\Lambda_\QQ$, 
equipped with the unique $\lambda$-structure 
for which it has trivial Adams operations. 
Since $\PS$ is the universal graded $\lambda$-ring, 
there is a unique $\lambda$-homomorphism 
satisfying 
\begin{align} 
\Phi \colon \PS &\to R\\
M_d&\mapsto q_d
\end{align} 
where $q_1,q_2,q_3,\ldots \in \Lambda_\QQ$ 
are defined by 
\begin{equation} 
\sum_{d=0}^\infty {h_d t^d} = \prod_{d=1}^\infty (1-t^d)^{-q_d}.
\end{equation} 
We have shown earlier that 
$\Phi(H_{k^m}) = h_k$ 
and 
$\Phi(P_{k^m}) = p_k$ 
for all $k,m \geq 1$ 
(Proposition~\ref{prop:phiimagesHP}). 
Applying $\Phi$ to  
$M_d = 
\frac{1}{d} \sum_{k|d} \mu(\tfrac{d}{k}) \psi_{d/k} P_{k}$ 
results in 
\begin{equation}\label{eqn:qdintermsofhd} 
\Phi(M_d) = q_d = 
\frac{1}{d} \sum_{k|d} \mu(\tfrac{d}{k}) p_k
=
\sum_{\tau \leq d} \arr^{-1}_{\tau d}
\prod_{b^m \in \tau}h_b  
=
\sum_{m|d}
\sum_{\substack{\tau \leq d\\\tau \text{ $m$-pure}}} 
\arr^{-1}_{\tau d}
\prod_{b^m \in \tau}h_b .  
\end{equation} 
This equation is inhomogeneous; 
we take the degree $d/m$ piece to see that 
\begin{equation}\label{eqn:inhomogeneousph} 
\tfrac{1}{d} \mu(m) p_{d/m}
=
\sum_{\substack{\tau \leq d\\\tau \text{ $m$-pure}}} 
\arr^{-1}_{\tau d}
\prod_{b^m \in \tau}h_b. 
\end{equation} 
Now we compare with the expansion of power symmetric functions 
in terms of the basis $\{h_\lambda\}_{\lambda \text{ partition}}$. 
Expanding the Taylor series for the logarithm 
in the relation 
$\log(1 + h_1 t + h_2 t^2 + h_3 t^3+ \cdots) = 
p_1 + p_2\frac{t^2}{2}+p_3\frac{t^3}{3}+\cdots$ 
shows that 
\begin{equation}\label{eqn:pnintermsofh} 
p_m 
=
\sum_{\substack{n_1+2n_2+\cdots + mn_m = m\\n_1,\ldots,n_m \geq 0}}
\frac{m(-1)^{1+\sum_k n_k}}{\sum_k n_k}
\frac{(\sum_k n_k) !}{n_1! \cdots n_m!}
\prod_{k=1}^m
h_k^{n_k}. 
\end{equation} 
We observe two consequences of \eqref{eqn:inhomogeneousph}: 
\begin{itemize} 
\item[$\diamond$] if $m$ is square-free 
    and $\tau$ is $m$-pure, 
    then $\mu(m) d \arr^{-1}_{\tau d}$ 
    is the coefficient of $\prod_{b^m \in \tau}h_b$ 
    in the linear expansion of $p_{d/m}$ 
    in the basis $\{h_\lambda\}_{\lambda \text{ partition}}$, 
    and 
\item[$\diamond$] if $m$ is not square-free 
    and $\tau$ is $m$-pure, 
    then $\arr^{-1}_{\tau d}$ is zero. 
\end{itemize} 
In either event, 
    if $\tau$ is $m$-pure then 
    $\arr^{-1}_{\tau d}$ 
    is equal to $\mu(m)/d$ times 
    the coefficient of $\prod_{b^m \in \tau}h_b$ 
    in the linear expansion of $p_{d/m}$, i.e. 
\begin{equation} 
\arr^{-1}_{\tau d} = 
\frac{\mu(m)}{d} 
\frac{(d/m)(-1)^{1+\sum_k n_k}}{\sum_k n_k}
\frac{(\sum_k n_k) !}{n_1! \cdots n_{d/m}!}
=
    \frac{\mu(m)}{m} \frac{(-1)^{r-1}}{r} 
    \frac{r!}{\tau[1^m]!\cdots \tau[(d/m)^m]!} . 
\end{equation} 
where $n_k = \tau[k^m]$ denotes 
the number of occurrences of $k^m$ in $\tau$. 
\end{proof}

\subsubsection{An observation on the M\"obius function of types} 

Call a type \defn{unramified} 
if all of its multiplicities are one 
and \defn{ramified} otherwise. 

\begin{proposition}
    $\mu_{\tau d} = 0$ if $\tau$ is ramified. 
\end{proposition}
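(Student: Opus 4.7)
The plan is to prove $\mu_{\tau d}=0$ for ramified $\tau$ by top-down induction on $\PT{d}$, using a clean description of the unramified elements above $\tau$. Given $\tau=\{(b_1,m_1),\ldots,(b_r,m_r)\}\in\PT{d}$, define $\tau^\sharp$ to be the unramified type obtained by replacing each part $(b_i,m_i)$ with $m_i$ copies of $(b_i,1)$. Iterating elementary forgets on each part witnesses $\tau\leq\tau^\sharp$, and by construction $\tau^\sharp\neq\tau$ whenever $\tau$ is ramified.

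The crux is the following lemma: \emph{every unramified $\sigma\in[\tau,d]$ satisfies $\sigma\geq\tau^\sharp$}, so that the unramified part of $[\tau,d]$ is exactly $[\tau^\sharp,d]$. To prove it, apply Proposition~\ref{prop:arrangementforgetmerge} to factor an arrangement witnessing $\tau\leq\sigma$ as $\tau\leq\rho\leq\sigma$, where the first step is a composition of elementary forgets and the second a composition of elementary merges. Each elementary merge replaces two parts of multiplicity $m$ with a single part of multiplicity $m$, so the set of distinct multiplicities present is preserved by merging. Since $\sigma$ is unramified, the multiplicities appearing in $\rho$ form the set $\{1\}$, so $\rho$ is itself unramified. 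But the only way to produce an unramified type from $\tau$ by iterated forgets is to fully split each part $(b_i,m_i)$ into $m_i$ copies of $(b_i,1)$, which forces $\rho=\tau^\sharp$. Hence $\tau^\sharp\leq\sigma$.

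With the lemma established, induct top-down and assume $\mu_{\sigma d}=0$ for every ramified $\sigma>\tau$. Since $\tau$ is ramified we have $\tau\neq d$, and moreover $\tau^\sharp\neq d$: if $\tau^\sharp=(d,1)$ then $\tau^\sharp$ has one part, forcing $\sum_i m_i=1$, i.e.\ $\tau=d$. The defining identity of the M\"obius function $\sum_{\sigma\in[\tau,d]}\mu_{\sigma d}=0$ then splits along the partition $[\tau,d]=\{\tau\}\sqcup\{\text{ramified }\sigma\in(\tau,d]\}\sqcup[\tau^\sharp,d]$ as
\[
\mu_{\tau d} \;+\; \sum_{\substack{\sigma\in(\tau,d]\\ \sigma\text{ ramified}}}\mu_{\sigma d} \;+\; \sum_{\sigma\in[\tau^\sharp,d]}\mu_{\sigma d} \;=\; 0.
\]
The middle sum vanishes by the inductive hypothesis, and the last sum equals $[\tau^\sharp=d]=0$. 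Therefore $\mu_{\tau d}=0$, completing the induction.

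The main obstacle is the lemma --- specifically, identifying the intermediate type $\rho$ with $\tau^\sharp$. This reduces cleanly to Proposition~\ref{prop:arrangementforgetmerge} together with the observation that merges preserve the set of distinct multiplicities; the rest of the argument is a routine application of M\"obius inversion together with the fact that the unramified types form an up-set in $\PT{d}$.
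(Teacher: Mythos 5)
Your argument is correct and follows essentially the same route as the paper's: identify the minimum unramified type $\tau^\sharp$ above $\tau$ (the paper calls it $\tau'$), observe that the unramified types in $[\tau,d]$ are exactly $[\tau^\sharp,d]$, and run a top-down induction on the poset by decomposing the defining M\"obius identity into the contribution of $\tau$, the remaining ramified types above it, and $[\tau^\sharp,d]$. The one genuine value you add over the paper's terse treatment is a fully spelled-out proof of the key lemma that every unramified $\sigma\geq\tau$ dominates $\tau^\sharp$: the paper simply asserts this ``follows from Proposition~\ref{prop:arrangementforgetmerge},'' whereas you carry out the factor-as-forget-then-merge argument and verify that elementary merges preserve the set of distinct multiplicities while the only unramified type reachable from $\tau$ by forgets alone is $\tau^\sharp$.
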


\begin{proof} 
Recall that $\mu$ is defined to satisfy 
    $\delta_{\tau \lambda}
    =
    \sum_{\tau \leq \kappa \leq \lambda}
    \mu_{\kappa \lambda}$. 
Any type $\tau$ has a unique unramified type $\tau'$ 
    defined by applying all possible maximal forget operations 
    to $\tau$ (e.g.~if $\tau = (3^2 2^3 2^1)$ 
    then $\tau' = (332222)$); 
the type $\tau'$ has the defining property that 
it is minimal among all unramified types 
    that are greater than or equal to $\tau$ 
    (this follows from 
    Proposition~\ref{prop:arrangementforgetmerge}). 

Choose any function $j \colon T_d \to \NN$ 
    satisfying $\tau < \lambda \implies j(\tau) < j(\lambda)$. 
Define the \defn{depth} of $\tau$ to be $j(d) - j(\tau)$. 
We proceed by induction on depth. 
The claim is vacuously true if $\tau = d$ (depth zero). 
For positive depth, first observe that 
\begin{equation} 
    \delta_{\tau d}
    =
    \sum_{\substack{\tau \leq \lambda\\\lambda\text{ ramified}}}
    \mu_{\lambda d}
    +
    \sum_{\tau' \leq \lambda}
    \mu_{\lambda d}.
\end{equation} 
If $\tau$ is unramified then there is nothing to show, 
while if $\tau$ is ramified then 
by the induction hypothesis this is equal to 
    $\mu_{\tau d}
    +
    \sum_{\tau' \leq \lambda}
    \mu_{\lambda d}
    =
    \mu_{\tau d}
    +
    \delta_{\tau' d}
    =
    \mu_{\tau d}$. 
We conclude that $\mu_{\tau d} = \delta_{\tau d} = 0$. 
\end{proof} 


\subsection{Grids and tilings}\label{sec:gridsAndTilings} 

We sketch a pictorial interpretation for types and arrangements. 
Types will be associated with diagrams of boxes (``grids''). 
For example, 
   the types $\mu = (1^3 1^2 2)$ and $\lambda = (1^2 1^2 1^1 2^1)$ 
   are associated with the two grids, respectively: 
\begin{figure}[h!] 
\begin{minipage}[b]{.35\textwidth} 
\centering
\begin{tikzpicture}[every node/.style={minimum size=.5cm-\pgflinewidth, outer sep=0pt}]
   \draw[step=0.5cm,color=black] (0,0) grid (0.5,1.5);
\end{tikzpicture}
\begin{tikzpicture}[every node/.style={minimum size=.5cm-\pgflinewidth, outer sep=0pt}]
   \draw[step=0.5cm,color=black] (0,0) grid (0.5,1);
\end{tikzpicture}
\begin{tikzpicture}[every node/.style={minimum size=.5cm-\pgflinewidth, outer sep=0pt}]
   \draw[step=0.5cm,color=black] (0,0) grid (1,.5);
\end{tikzpicture}
\end{minipage}%
\begin{minipage}[b]{.05\textwidth}
   \centering
   \,
\end{minipage} 
   \begin{minipage}[b]{.35\textwidth} 
\centering
\begin{tikzpicture}[every node/.style={minimum size=.5cm-\pgflinewidth, outer sep=0pt}]
   \draw[step=0.5cm,color=black] (0,0) grid (0.5,1);
\end{tikzpicture}
\begin{tikzpicture}[every node/.style={minimum size=.5cm-\pgflinewidth, outer sep=0pt}]
   \draw[step=0.5cm,color=black] (0,0) grid (0.5,1);
\end{tikzpicture}
\begin{tikzpicture}[every node/.style={minimum size=.5cm-\pgflinewidth, outer sep=0pt}]
   \draw[step=0.5cm,color=black] (0,0) grid (0.5,0.5);
\end{tikzpicture}
\begin{tikzpicture}[every node/.style={minimum size=.5cm-\pgflinewidth, outer sep=0pt}]
   \draw[step=0.5cm,color=black] (0,0) grid (1,.5);
\end{tikzpicture}
   \end{minipage} 
\end{figure} 

\begin{definition}
A \defn{tiling of $\lambda$ by $\mu$} 
is a bijection between the boxes of $\mu$ and the boxes of $\lambda$ 
satisfying two properties: 
\begin{enumerate} 
   \item horizontally adjacent boxes in $\mu$ map to horizontally adjacent boxes in $\lambda$, 
   \item vertically adjacent boxes in $\lambda$ come from vertically adjacent boxes in $\mu$. 
\end{enumerate} 
Two tilings of $\lambda$ by $\mu$ are considered equivalent if 
they differ by a permutation of the boxes of $\lambda$ 
   which stabilizes every block and 
   permutes the set of columns of a given block. 
\end{definition}

For example, the following two tilings of 
$\lambda = (1^2 1^2 1^1 2^1)$ by $\mu = (1^3 1^2 2)$ 
are inequivalent: \\[-1em]
\begin{figure}[h!] 
\begin{minipage}[c]{0.25\textwidth}
\centering
\begin{tikzpicture}[every node/.style={minimum size=.5cm-\pgflinewidth, outer sep=0pt}]
   \draw[step=0.5cm,color=black] (0,0) grid (0.5,1.5);
   \node[fill=cl1!40] at (0.25,0.25) {};
   \node[fill=cl1!40] at (0.25,0.75) {};
   \node[fill=cl1!40] at (0.25,1.25) {};
\end{tikzpicture}
\begin{tikzpicture}[every node/.style={minimum size=.5cm-\pgflinewidth, outer sep=0pt}]
   \draw[step=0.5cm,color=black] (0,0) grid (0.5,1);
   \node[fill=cl2!40] at (0.25,0.25) {};
   \node[fill=cl2!40] at (0.25,0.75) {};
\end{tikzpicture}
\begin{tikzpicture}[every node/.style={minimum size=.5cm-\pgflinewidth, outer sep=0pt}]
   \draw[step=0.5cm,color=black] (0,0) grid (1,.5);
   \node[fill=cl3!40] at (0.25,0.25) {};
   \node[fill=cl3!40] at (0.75,0.25) {};
\end{tikzpicture}
\end{minipage}%
\begin{minipage}[b]{0.05\textwidth}
   \centering
   {\Large \textrightarrow }
\end{minipage}%
\begin{minipage}[t]{0.3\textwidth}
\centering
   \vspace{-.4cm}
\begin{tikzpicture}[every node/.style={minimum size=.5cm-\pgflinewidth, outer sep=0pt}]
   \draw[step=0.5cm,color=black] (0,0) grid (0.5,1);
   \node[fill=cl1!40] at (0.25,0.25) {};
   \node[fill=cl1!40] at (0.25,0.75) {};
\end{tikzpicture}
\begin{tikzpicture}[every node/.style={minimum size=.5cm-\pgflinewidth, outer sep=0pt}]
   \draw[step=0.5cm,color=black] (0,0) grid (0.5,1);
   \node[fill=cl2!40] at (0.25,0.25) {};
   \node[fill=cl2!40] at (0.25,0.75) {};
\end{tikzpicture}
\begin{tikzpicture}[every node/.style={minimum size=.5cm-\pgflinewidth, outer sep=0pt}]
   \draw[step=0.5cm,color=black] (0,0) grid (0.5,0.5);
   \node[fill=cl1!40] at (0.25,0.25) {};
\end{tikzpicture}
\begin{tikzpicture}[every node/.style={minimum size=.5cm-\pgflinewidth, outer sep=0pt}]
   \draw[step=0.5cm,color=black] (0,0) grid (1,.5);
   \node[fill=cl3!40] at (0.25,0.25) {};
   \node[fill=cl3!40] at (0.75,0.25) {};
\end{tikzpicture}
\end{minipage}
\begin{minipage}[t]{0.25\textwidth}
   \centering
   \vspace{-.4cm}
\begin{tikzpicture}[every node/.style={minimum size=.5cm-\pgflinewidth, outer sep=0pt}]
   \draw[step=0.5cm,color=black] (0,0) grid (0.5,1);
   \node[fill=cl2!40] at (0.25,0.25) {};
   \node[fill=cl2!40] at (0.25,0.75) {};
\end{tikzpicture}
\begin{tikzpicture}[every node/.style={minimum size=.5cm-\pgflinewidth, outer sep=0pt}]
   \draw[step=0.5cm,color=black] (0,0) grid (0.5,1);
   \node[fill=cl1!40] at (0.25,0.25) {};
   \node[fill=cl1!40] at (0.25,0.75) {};
\end{tikzpicture}
\begin{tikzpicture}[every node/.style={minimum size=.5cm-\pgflinewidth, outer sep=0pt}]
   \draw[step=0.5cm,color=black] (0,0) grid (0.5,0.5);
   \node[fill=cl1!40] at (0.25,0.25) {};
\end{tikzpicture}
\begin{tikzpicture}[every node/.style={minimum size=.5cm-\pgflinewidth, outer sep=0pt}]
   \draw[step=0.5cm,color=black] (0,0) grid (1,.5);
   \node[fill=cl3!40] at (0.25,0.25) {};
   \node[fill=cl3!40] at (0.75,0.25) {};
\end{tikzpicture}
\end{minipage}
\end{figure} 

\FloatBarrier
The following three tilings of 
$\lambda = (3)$ by $\mu = (1^2 1)$ are equivalent: 
\begin{figure}[H] 
\centering
\begin{minipage}[c]{0.15\textwidth}
\begin{tikzpicture}[every node/.style={minimum size=.5cm-\pgflinewidth, outer sep=0pt}]
   \draw[step=0.5cm,color=black] (0,0) grid (0.5,1);
   \node[fill=cl1!40] at (0.25,0.25) {};
   \node[fill=cl1!40] at (0.25,0.75) {};
\end{tikzpicture}
\begin{tikzpicture}[every node/.style={minimum size=.5cm-\pgflinewidth, outer sep=0pt}]
   \draw[step=0.5cm,color=black] (0,0) grid (0.5,0.5);
   \node[fill=cl2!40] at (0.25,0.25) {};
\end{tikzpicture}
\end{minipage}
\begin{minipage}[c]{0.1\textwidth}
{\Large \textrightarrow}
\end{minipage}
\begin{minipage}[c]{0.15\textwidth}
\begin{tikzpicture}[every node/.style={minimum size=.5cm-\pgflinewidth, outer sep=0pt}]
   \draw[step=0.5cm,color=black] (0,0) grid (1.5,.5);
   \node[fill=cl1!40] at (0.25,0.25) {};
   \node[fill=cl1!40] at (0.75,0.25) {};
   \node[fill=cl2!40] at (1.25,0.25) {};
\end{tikzpicture}
\end{minipage}
\begin{minipage}[c]{0.15\textwidth}
\begin{tikzpicture}[every node/.style={minimum size=.5cm-\pgflinewidth, outer sep=0pt}]
   \draw[step=0.5cm,color=black] (0,0) grid (1.5,.5);
   \node[fill=cl1!40] at (0.25,0.25) {};
   \node[fill=cl2!40] at (0.75,0.25) {};
   \node[fill=cl1!40] at (1.25,0.25) {};
\end{tikzpicture}
\end{minipage}
\begin{minipage}[c]{0.15\textwidth}
\begin{tikzpicture}[every node/.style={minimum size=.5cm-\pgflinewidth, outer sep=0pt}]
   \draw[step=0.5cm,color=black] (0,0) grid (1.5,.5);
   \node[fill=cl2!40] at (0.25,0.25) {};
   \node[fill=cl1!40] at (0.75,0.25) {};
   \node[fill=cl1!40] at (1.25,0.25) {};
\end{tikzpicture}
\end{minipage}
\end{figure} 


The following assertions are easily verified: 
    types of degree $d$ are in bijection 
    with equivalence classes of grids of area $d$; 
    arrangements are in bijection 
    with equivalence classes of tilings; 
    the duality of types corresponds to rotating 
    the blocks of a grid by 90 degrees; 
    any tiling of $\lambda$ by $\mu$ is sent to 
    a tiling of $\mu^t$ by $\lambda^t$ under the rotation of grids. 



\section{Applications} 

In this section we survey some geometric and 
    combinatorial applications. 
We will focus on the top stratum 
    since general strata are studied in 
    the next section. 

\subsection{The motivic class of $\Irr_{d,n}$} 

The Grothendieck ring of varieties $\KV{k}$ over a field $k$ 
is additively generated 
by isomorphism classes $[X]$ of varieties $X$ over $k$ 
modulo the relations $[X] - ([X\backslash Z] + [Z])$ 
for $Z \subset X$ a closed subvariety. 
Its ring multiplication is defined by $[X][Y]=[X \times Y]$. 
The ring $\KV{k}$ has a $\sigma$-structure given by\footnote{Many authors equip $\KV{k}$ with the opposite $\lambda$-structure, 
however our convention is in better analogy 
with the standard identification of $\lambda$-operations 
with exterior powers on vector bundles.} 
\begin{equation} 
    \sigma_t(X) \coloneqq Z_X(t) 
    = \sum_{n \geq 0} [\SymProd{X}{n}] \cdot t^n 
\end{equation} 
where $\SymProd{X}{n}$ is 
the unordered configuration space of degree $n$ on $X$. 
Note that 
$\lambda_n(X) \neq \mathrm{Conf}_n(X)$ in general. 

\begin{remark}
It is known that $\KV{k}$ is not special. 
Indeed Larsen--Lunts~\cite{MR2098401} have shown that 
    $Z_{C_1 \times C_2}(t)$ is {irrational} 
    for curves $C_i$ of positive genus, 
while $Z_{C_1}\ast Z_{C_2}$ (Witt product) 
is rational since $Z_{C_i}$ is rational. 
\end{remark}

The coefficients of $\Exp([\Uc_1]t)$ are known to have 
    a geometric interpretation by \cite[Theorem~1]{MR2046199}. 
The next proposition extends this to any effective element of 
    $t\KV{k}[\![t]\!]$. 

\begin{proposition}\label{prop:zetafngradedspace}
Let $\Uc_1,\Uc_2,\ldots$ be a collection of varieties. 
For each nonnegative integer $k$ let $\Sc_k$ 
    be the unordered configuration space of degree $k$ on 
    $\sqcup_d \Uc_d$ where points of $\Uc_d$ have weight $d$. 
Then in $\KV{k}[\![t]\!]$ we have the identity 
\begin{equation} 
    \mathrm{Exp}\left([\Uc_1]t+[\Uc_2]t^2+\cdots\right)
    =
    \sum_{k=0}^\infty [\Sc_k] t^k. 
\end{equation} 
\end{proposition}

\begin{proof} 
By the first two identities 
    of the list in \S\ref{sec:canonicalbases}, 
\[
    \sum_{n= 0}^\infty H_nt^n = 
    \prod_{d= 1}^\infty
    \sigma_{t^d}(M_d) = 
    \prod_{d= 1}^\infty 
    \sum_{k=0}^\infty
        (\sigma_k M_d) t^{kd}  
        =
    \prod_{d= 1}^\infty
    \sum_{k=0}^\infty
        h_{k,(d)}  t^{kd} . 
\]
Let $\SymProd{\Uc_d}{k}$ denote 
    the $k$th symmetric product of $\Uc_d$. 
Let $\Uc = [\Uc_1]t+[\Uc_2]t^2+\cdots$. 
Applying the higher plethysm $- \circ \Uc$ obtains 
\begin{align} 
    \sum_{n= 0}^\infty (H_n \circ \Uc) t^n 
    = 
    \prod_{d=1}^\infty 
    \sum_{k=0}^\infty
    \big( h_{k,(d)} \circ \Uc \big) t^{kd}
    =
    \prod_{d=1}^\infty 
    \sum_{k=0}^\infty
   \big( h_k\circ \Uc_d \big) t^{kd}
    =
    \prod_{d=1}^\infty 
    \sum_{k=0}^\infty
       [\SymProd{\Uc_d}{k}] t^{kd}.
\end{align} 
    The coefficient of $t^d$ is equal to 
\begin{equation} 
\bigsqcup_{\substack{n_1+2n_2+\cdots+dn_d\\n_1,\ldots,n_d \geq 0}}
\prod_{k=1}^d
    [S^{n_k} \Uc_k]
    = [\Sc_d],
\end{equation} 
which proves the claim. 
\end{proof} 

Next we combine the proposition with Theorem~\ref{thm:maintheorem}. 
Let $$\mm \colon \KV{k} \to R$$ 
be a $\lambda$-ring homomorphism 
to a $\lambda$-ring $R$ without additive torsion 
(``motivic measure''). 

\begin{corollary}\label{thm:irrdnmotivicformulageneral2} 
If $\mm(\Sc_1), \ldots,\mm(\Sc_d)$ are contained 
   in a $\lambda$-subring of $R$ 
   on which Adams operations separate 
    (e.g. $R$ special), then 
\begin{equation}\label{eqn:irrdnmotivicformulatoptype2} 
\mm(\Uc_d)=
\sum_{d=km}
\sum_{\lambda \vdash m}
    (-1)^{\ell-1}\frac{\mu(k)}{k\ell}
    \binom{\ell}{n_1,\ldots,n_m}
    \psi_{k}
    \prod_{j=1}^m
        \mm(\Sc_j)^{n_j}. 
\end{equation} 
\end{corollary} 

\begin{remark}
Every denominator in 
    \eqref{eqn:irrdnmotivicformulatoptype2} 
    is a divisor of $d$ 
    (this follows from \eqref{eqn:pnintermsofh} 
    and the fact that the $h_k$ 
    are an integral basis of $\Lambda$). 
\end{remark}

\begin{proof}[Proof of Theorem~\ref{thm:GeomIrred}, top stratum] 
The special case of Theorem~\ref{thm:GeomIrred} is 
\begin{align} 
    \Sc_d &= \PP^{\binom{n+d}{d}-1}_k
     = \{\text{degree $d$ hypersurfaces in $\PP^n_k$}\},\\
    \rotatebox[origin=c]{90}{$\subset$} \,\,& \\
    \Uc_d &= 
     \{\text{geometrically irreducible hypersurfaces}\} .
\end{align} 
The motivic measure $\xi$ 
    is the canonical map from $\KV{k}$ 
    to its quotient modulo additive torsion. 
The Adams operations separate over 
    the polynomial subring 
    generated by $[\AA^1_k]$, 
    and this subring contains $\xi([\Sc_1]),\ldots,\xi([\Sc_d])$. 
This proves the formula for $[\Irr_{d,n}]$ in $\KV{k}$ 
    up to additive torsion. 
Let $\Sc_\tau = \overline{\Uc_\tau}$ 
    be the space of hypersurfaces 
    of splitting type $\leq \tau$. 
We have that $[\Sc_\tau]=\sum_{\lambda \leq \tau}[\Uc_\lambda]$ 
    so by M\"obius inversion 
    $[\Irr_{d,n}] =[\Uc_d]
    = \sum_{\tau \leq d} \mu_{\tau d}[\Sc_\tau]$. 
Since each $\Sc_\tau$ is a projective space, 
    this shows that $[\Irr_{d,n}]$ is contained 
    in the polynomial subring of $\KV{k}$ generated by $[\AA^1_k]$. 
This subring contains no additive torsion, 
    and the formula for $[\Irr_{d,n}]$ is proven. 
\end{proof}

We give two examples of motivic measures $\xi$ to which 
    the theorem may be applied. 


\subsubsection{Example 1: $E$-polynomials} 

The polynomial ring $\mathbb Z[u,v]$ 
has a $\sigma$-structure 
defined by $\sigma_n(u^pv^q) = u^{np}v^{nq}$ 
which is special. 
It is well-known that there is a unique $\lambda$-ring homomorphism 
$\Ep{} \colon \KV{\CC} \to \ZZ[u,v] \colon x \mapsto \Ep{x}$ 
such that for any variety $X$ one has 
$$\Ep{X} = 
\sum_{p,q,k \geq 0}
    (-1)^k 
    h^{p,q}(H^k_c(X,\QQ)) 
    u^p v^q$$ 
    where 
    $h^{p,q}(H) = \dim \mathrm{Gr}_F^p \mathrm{Gr}^{p+q}_W H_\CC$ 
    for a mixed Hodge structure $(H,F,W)$. 


\subsubsection{Example 2: Zeta function over a finite field} 

Let $k = \FF_q$. 
Recall (Example~\ref{ex:wittvectors}) that $\Lambda(\QQ)$ 
is a special $\lambda$-ring. 
Its Adams operations satisfy 
\[\psi_m\exp\left(\sum_{d=1}^\infty w_d\frac{t^d}{d}\right) 
= \exp\left(\sum_{d=1}^\infty w_{dm}\frac{t^d}{d}\right).\]
The function sending a variety $X/k$ to 
its zeta function $\zeta_X(t)$ 
uniquely extends to a ring homomorphism 
$\zeta \colon \KV{\FF_q} \to \Lambda(\QQ) = 1 + t\QQ[\![t]\!]$ 
\cite[Theorem~2.1]{ramachandran2015zeta}. 

\begin{proposition}\label{prop:zetainvariant} 
The homomorphism 
$\zeta \colon \KV{\FF_q} \to \Lambda(\QQ)$ 
commutes with $\lambda$-operations. 
\end{proposition}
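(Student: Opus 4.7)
The plan is to invoke Lemma~\ref{lemma:adamstolambda}: since $\Lambda(\QQ)$ has no additive torsion, it suffices to show that $\zeta$ commutes with Adams operations, i.e.~that $\zeta(\psi_m X) = \psi_m \zeta_X$ in $\Lambda(\QQ)$ for every variety $X$ over $\FF_q$ and every integer $m \geq 1$. To detect equality in $\Lambda(\QQ)$ I would use the ghost maps $w_n \colon \Lambda(\QQ) \to \QQ$ defined by $w_n(\exp(\sum_d a_d t^d/d)) = a_n$, which are jointly injective ring homomorphisms. From the description of Adams operations on $\Lambda(\QQ)$ in \S\ref{sec:wittvectors} one has the ghost-to-Adams relation $w_n \circ \psi_m = w_{nm}$, while the definition $\zeta_X = \exp(\sum_n |X(\FF_{q^n})|\, t^n/n)$ gives $w_n \circ \zeta = \#_n$, where $\#_n \colon \KV{\FF_q} \to \ZZ$ denotes the additive extension of $X \mapsto |X(\FF_{q^n})|$. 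The claim therefore reduces to the purely arithmetic identity
$$\#_n(\psi_m X) = \#_{nm}(X) \qquad (n,m \geq 1).$$

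For this identity, observe that $\#_n$ factors as $\FF_{q^n}$-point counting applied to the base change $X \mapsto X \otimes_{\FF_q} \FF_{q^n}$. Since symmetric powers commute with base change, base change is a homomorphism of $\lambda$-rings and therefore commutes with Adams operations, reducing matters to the case $n = 1$: for any variety $Y$ over any finite field $\FF_Q$, one needs $\#_1(\psi_m Y) = \#_m(Y)$. Applying point counting coefficientwise to the power series identity $\sum_{m \geq 1} \psi_m(Y) s^m = s \tfrac{d}{ds} \log Z_Y(s)$ from \S\ref{sec:prelimlambda}, and invoking the classical identity $\sum_n |\SymProd{Y}{n}(\FF_Q)|\, s^n = \zeta_Y(s)$ (which reflects that effective zero-cycles of degree $n$ on $Y$ over $\FF_Q$ are parametrized by $\SymProd{Y}{n}$), yields $\sum_m \#_1(\psi_m Y) s^m = s \tfrac{d}{ds} \log \zeta_Y(s) = \sum_m \#_m(Y) s^m$, whence the result by comparing coefficients.

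The main technical point in this plan is the compatibility of base change with Adams operations on $\KV{\FF_q}$, which rests on the elementary observation that symmetric powers commute with base change; once this is in hand, the remainder is bookkeeping with ghost components of $\Lambda(\QQ)$ and the classical relationship between the motivic and Hasse--Weil zeta functions.
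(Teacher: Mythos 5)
Your proposal is correct and follows essentially the same route as the paper's proof: reduce to Adams operations via Lemma~\ref{lemma:adamstolambda}, pass to ghost coordinates of $\Lambda(\QQ)$ to reduce to the arithmetic identity $|\psi_m(X)(\FF_{q^n})| = |X(\FF_{q^{nm}})|$, and derive that identity from the fact that symmetric products commute with flat base change together with the relation between the motivic and Hasse--Weil zeta functions. The only cosmetic difference is that you make the ghost-map formalism and the reduction to $n=1$ explicit, whereas the paper reads the same information directly off the identity $\mm_{q,q^d}Z_X(t) = \zeta_{X_{\FF_{q^d}}}(t)$.
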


\begin{proof} 
It is well-known that 
    the motivic zeta function $Z_X(t)$ of $X$ equals 
$\exp\left(\sum_{r}\psi_r(X)\frac{t^r}{r}\right)$  
and specializes to 
$\zeta_X=
\exp\left(\sum_{r}|X(\FF_{q^r})|\frac{t^r}{r}\right)$ 
under the point-counting ring homomorphism 
$\mm_q \colon \KV{\FF_q} \to \ZZ \colon X \mapsto |X(\FF_q)|$. 
Let 
$\mm_{q,q^d} \colon \KV{\FF_q} \to \ZZ \colon 
X \mapsto |X(\FF_{q^d})|$. 
More generally, since the formation of symmetric products 
    commutes with flat base change, 
    $\mm_{q^d}Z_{X_{\FF_{q^d}}}(t) 
    = \sum_{n}|\SymProd{(X_{\FF_{q^d}})}{n}(\FF_{q^d})|t^n
    = \sum_{n}|(\SymProd{X}{n})_{\FF_{q^d}}(\FF_{q^d})|t^n
    = \sum_{n}|(\SymProd{X}{n})(\FF_{q^d})|t^n$, 
    so 
    $\zeta_{X_{\mathbb F_{q^d}}}$ 
    equals the image of $Z_X(t)$ 
    under $\mm_{q,q^d}$.  
This shows that 
$\mm_{q,q^d}(\psi_r(X)) 
= |\psi_r(X)(\mathbb F_{q^d})|
= |X(\mathbb F_{q^{rd}})|$ 
and so 
\begin{equation}\label{eqn:adamstozeta}
\zeta_{\psi_r(X)}
= \exp\left(\sum_{d=1}^\infty
|\psi_r(X)(\mathbb F_{q^d})|\frac{t^d}{d}\right) 
= \exp\left(\sum_{d=1}^\infty
    |X(\mathbb F_{q^{rd}})|
    \frac{t^d}{d}\right) 
= \psi_r(\zeta_X) .  
\end{equation}
Since Adams operations are additive 
this shows that $\zeta$ commutes with Adams operations 
on all of $\KV{\FF_q}$. 
As $\Lambda(\QQ)$ is free of additive torsion, 
commuting with Adams operations implies 
commuting with $\lambda$-operations 
    (Lemma~\ref{lemma:adamstolambda}). 
\end{proof} 

\begin{corollary}\label{cor:pointcountingmainformula}
\begin{equation}\label{eqn:pointcountingmainformula} 
|\Uc_d(\mathbb F_q)|=
\sum_{d=km}
\sum_{\lambda \vdash m}
    (-1)^{\ell-1}\frac{\mu(k)}{k\ell}
    \binom{\ell}{n_1,\ldots,n_m}
\prod_{j=1}^m
    |\Sc_j(\mathbb F_{q^{k}})|^{n_j}.
\end{equation} 
\end{corollary}

\begin{proof} 
Apply \eqref{eqn:irrdnmotivicformulatoptype2} 
    to the motivic measure $\zeta$ to obtain 
\begin{equation}
    \zeta_{\Uc_d}=
\sum_{d=km}
\sum_{\lambda \vdash m}
    (-1)^{\ell-1}\frac{\mu(k)}{k\ell}
    \binom{\ell}{n_1,\ldots,n_m}
\prod_{j=1}^m
    (\psi_{k}
    \zeta_{\Sc_j})^{n_j}.
\end{equation} 
Identify $\Lambda(\QQ)$ with 
    the ring of big Witt vectors valued in $\QQ$. 
The corollary follows from applying the 
    projection to the first ghost coordinate: 
\begin{equation} 
    \frac{d}{dt}\circ\log\bigg\rvert_{t=0}\colon 
    \Lambda(\QQ) \to \QQ\colon 
    f(t)\mapsto \frac{f'(0)}{f(0)}.
\end{equation} 
We compute Adams operations 
    using \eqref{eqn:adamstozeta} which shows 
    this projection sends $\psi_r(\zeta_X)$ to $X(\mathbb F_{q^r})$. 
\end{proof} 



\subsection{Inverse P\'olya enumeration}\label{sec:invpolyaenumeration} 

Consider a collection of objects whose 
    weights are positive integers 
    and assume that 
    the number $u_d$ of objects of any given weight $d$ is finite. 
Consider the problem of determining 
    the integers $u_1,u_2,\ldots$ given 
    the integers $x_1,x_2,\ldots$ where $x_d$ 
    is the total number of unordered 
    combinations of objects of weight $d$. 
This setting corresponds to 
    the $\lambda$-ring $R = \ZZ$ with trivial Adams operations, 
and the coefficients of the plethystic logarithm for $\ZZ$ are 
    the following universal polynomials: 
\begin{equation}\label{eqn:fdck2} 
    u_d = 
\sum_{d=km}
\sum_{\lambda \vdash m}
    (-1)^{\ell-1}\frac{\mu(k)}{k\ell}
    \binom{\ell}{n_1,\ldots,n_m}
    x^\lambda \in \QQ[x_1,\ldots,x_d].
\end{equation} 
For example, 
    $u_2=x_2 - \tfrac12(x_1^2+x_1)$ 
    and 
    $u_3=x_3 - x_2x_1 + 
    \tfrac{1}{3}(x_1^3-x_1).$ 
The problem of determining $x_1,x_2,\ldots$ 
    given $u_1,u_2,\ldots$ is a form of P\'olya enumeration, 
    whence the name.

\section{Motivic measures of general strata}\label{sec:genconfigspaces} 

In this section we investigate the motivic measure 
    of a general stratum $\Uc_\tau$ in $\Sc_d$ 
    of points with exact splitting type $\tau$. 
We work in the same abstract setting as in 
Proposition~\ref{prop:zetafngradedspace}. 
That is to say, $\Uc_1,\Uc_2,\ldots$ denotes 
    an arbitrary collection of varieties and $\Sc_k$ is 
    the unordered configuration space of degree $k$ on 
    $\sqcup_d \Uc_d$ where points of $\Uc_d$ have weight $d$. 

\subsubsection{The type stratification} 

For any integers $d,e$ we have the morphism 
    $\Sc_d \times \Sc_e \to \Sc_{d+e}$ 
induced by multiplication. 
Any arrangement $A$ of $\lambda$ into $\tau$ determines a morphism 
\begin{align} 
    \pi_A \colon 
    \prod_{b^m \in \lambda} \Sc_b &\to 
    \prod_{c^n \in \tau} \Sc_c  \\
    (c_1,\ldots,c_r)&\mapsto 
    (c_1^{A_{11}}\cdots c_r^{A_{r1}},\ldots,c_1^{A_{1s}}\cdots c_r^{A_{rs}}).
\end{align} 
We write $\pi$ for the distinguished morphism 
$\prod_{b^m \in \lambda} \Sc_b \to \Sc_d$ 
corresponding to the unique arrangement of $\lambda$ into the maximal type $d$. 
    The \defn{$\tau$-stratum $\Sc_\tau \subset \Sc_d$} is 
    the Zariski closure of 
    $\pi(\prod_{c^n \in \tau} \Sc_c)$. 
The \defn{subspace $\Uc_\tau \subset \Sc_d$ of elements of (exact) type $\tau$} 
    is the locally closed subscheme 
    $\Sc_\tau \backslash \cup_{\lambda < \tau} \Sc_\lambda$. 


\subsection{Generalized configuration spaces} 

Let $X$ be a variety over a field $k$. 
Let $\vec{n} = (n_1,\dots,n_d) \in \ZZgeq^d$ 
    and let $\Delta \subset X^{n_1+\dots+n_d}$ 
    be the ``big diagonal'' consisting of points 
    where at least two of the coordinates are equal. 
Let 
    \[\Conf_{\vec{n}}(X) = 
    \big(X^{n_1+\dots+n_d}-\Delta\big)\,\,/\,\,
    \Sigma_{n_1}\times\dots\times\Sigma_{n_d}.\]
For a type $\tau \vDash d$ 
with associated partitions $\tau_p$ corresponding to degree $p$, 
let $\vec{n}_p = (\tau[p^1],\ldots,\tau[p^d])  \in \ZZgeq^d$. 
We leave the proof of the following easy lemma to the reader. 

\begin{lemma}\label{lemma:strataasgenconfigspaces}
Combining configurations gives a canonical isomorphism 
    between any open stratum 
    $\Uc_\tau$ with a product of generalized configuration spaces: 
    \begin{equation}
        \Uc_\tau = \prod_{p= 1}^d\Conf_{\vec{n}_p}(\Uc_p). 
    \end{equation}
\end{lemma}


Generalized configuration spaces 
satisfy the following recurrence relation in $\KV{k}$. 
Let $m \geq 1$ be an integer, $\vec{w} = (n_1,\dots,n_d) \in \ZZgeq^d$ 
and $\vec{v} = (n_1,\dots,n_{d},m)$. 
We denote $|\vec{w}| = \sum_{i}n_i$. 
For vectors $\vec{u}_1,\vec{u}_2$ 
we write $\vec{u}_1 \leq \vec{u}_2$ 
if each coordinate satisfies the inequality. 

\begin{proposition}\label{prop:generalizedconfspacerecurrence}
For any $\vec{v},\vec{w}$ as above, 
    we have the following recurrence in $\KV{k}$, 
\begin{equation}\label{eqn:generalizedconfspacerecurrence}
    [\Conf_{\vec{v}}(X)] 
    = [\Conf_{\vec{w}}(X)\times \Conf_{m}(X)] 
    - \sum_{\substack{\vec{u} \leq \vec{w} \in \ZZgeq^d\\ 1 \leq |\vec{u}|\leq m }}
    [\Conf_{(\vec{w}-\vec{u},m-|\vec{u}|,\vec{u})}(X)]
\end{equation}
where $(\vec{w}-\vec{u},m-|\vec{u}|,\vec{u})$ denotes 
    the vector obtained by concatenation.
\end{proposition}

\begin{proof} 
The open immersion 
\begin{align}
    \Conf_{\vec{v}}(X) &\to \Conf_{\vec{w}}(X)\times\Conf_m(X)\\
    (D_1,\dots,D_d,E) &\mapsto ((D_1,\dots,D_d),E)  
\end{align}
identifies $\Conf_{\vec{v}}(X)$ 
    with an open subspace of $\Conf_{\vec{w}}(X)\times\Conf_m(X)$. 
The complement consists of points 
    $(D_1,\dots,D_d,E)$ where the support of $E$ 
    is not disjoint from the supports of the $D_i$. 
Such points are stratified by vectors 
    $\vec{u} = (u_1,\dots,u_d)$ where $u_i$ equals 
    the degree of the intersection of $E$ with $D_i$. 
The morphism 
\begin{align}
    \Conf_{(\vec{w}-\vec{u},m-|\vec{u}|,\vec{u})}(X) &\to \Conf_{\vec{w}}(X)\times \Conf_m(X)\\
    (D_1,\dots,D_d,E,F_1,\dots,F_d) &\mapsto (D_1+F_1,\dots,D_d+F_d,E+\sum_i F_i) 
\end{align}
identifies the open stratum corresponding to 
    $\vec{u}$ with 
    $\Conf_{(\vec{w}-\vec{u},m-|\vec{u}|,\vec{u})}(X)$. 
\end{proof} 

\begin{remark}\label{rmk:compatibilityofrecurrencewithbinomialcoefficients}
Define the generating series 
\[C_d(t_1,\dots,t_d)(X) = 
    \sum_{\vec{w} = (n_1,\dots,n_d)\geq \vec{0}}
    [\Conf_{\vec{w}}(X)]t_1^{n_1}\dots t_d^{n_d} \in \KV{k}[\![t_1,\ldots,t_d]\!].\]
Then Proposition~\ref{prop:generalizedconfspacerecurrence} 
    is equivalent to the identity  
    $$C_{2d+1}(t_1,\dots,t_d,s,t_1s,\dots,t_ds)(X) 
    = C_d(t_1,\dots,t_d)(X)C_1(s)(X).$$ 
\end{remark}

\begin{remark}
Configuration spaces with restricted multiplicities 
    have been studied in \cite{farb2019coincidences}, 
    \cite{biludashowe}. 
The polysymmetric analogue of these spaces is 
    $\Epf{d}{n} \in \PS_\ZZ$, 
    the sum of all products $f_1 \cdots f_r$ 
    where $f_1,\ldots,f_r$ are monomials in 
    $\ZZ[\![x_{\ast\ast}]\!]$ 
    with respective degrees $d_1,\ldots,d_r$ whose 
    greatest common divisor is $n$-powerfree. 
Let $\vec{t} = (t_1,\ldots,t_r)$ be a vector of indeterminates, 
and for $\vec{d} = (d_1,\ldots,d_r)$ 
write $\vec{t}^{\,\vec{d}} = t_1^{d_1}\cdots t_r^{d_r}$ 
and $|\vec{d}| = d_1 + \cdots + d_r$. 
One can prove the following identity in $\PS_\ZZ[\![\vec{t}]\!]$, 
\begin{equation} 
    \left( \sum_{\vec{d} \geq 0} \Epf{d}{n}\, \vec{t}^{\,\vec{d}} \right)
    \left( \sum_{{d}= 0}^\infty (\psi_n H_{{d}}) 
    (t_1\cdots t_r)^{n{d}} \right)
     = \prod_{i=1}^r \sum_{d = 0}^\infty H_d t_i^d.
\end{equation} 
This is a polysymmetric analogue of 
    \cite[Example 4.1.1-(3)]{biludashowe}. 
\end{remark}


\subsection{The ring of characteristic cycles}\label{sec:binomialquotient} 

The \defn{ring $\RC{k}$ of characteristic cycles over $k$} 
    is defined to be $\KV{k}_B$, 
    the quotient binomial ring of $\KV{k}$ 
    (cf. Remark~\ref{rmk:quotientBinomialRing}). 
    Let $\Rcc{\Uc}$ denote the class of a variety in $\RC{k}$. 
The ring $\RC{k}$ avoids certain pathologies exhibited by $\KV{k}$. 
One such pathology is that in general 
    $M_\tau \circ (\Uc_1 t + \Uc_2 t^2+\cdots)$ 
    cannot be identified with the stratum $\Uc_\tau$ 
    except when $\tau = d$. 
Even if a motivic measure $\mm$ is valued in 
    a special $\lambda$-ring 
    we may not have 
    $\mm(M_\tau \circ (\Uc_1 t + \Uc_2 t^2+\cdots)) 
    = \mm(\Uc_\tau)$. 
Nonetheless we will show that 
    $$\Rcc{M_\tau \circ (\Uc_1 t + \Uc_2 t^2+\cdots)} 
    = \Rcc{\Uc_\tau}.$$ 
Another such pathology is that 
    in view of the formula $[\SymProd{X}{n}] = \sigma_n[X]$ 
    and the analogy with the standard $\lambda$-operations 
    on vector bundles, 
    one might expect that $[\Conf_n(X)] = \lambda_n[X]$ 
    however this generally fails. 
Nonetheless we have 
the following result of Macdonald~\cite{MR143204} 
\begin{equation}\label{eqn:chiofconf} 
    \chi(\Conf_n(X)) = \binom{\chi(X)}{n} = \lambda_n(\chi(X)) 
\end{equation} 
so even if $[\Conf_n(X)]-\lambda_n([X])$ fails to vanish 
    it is at least in the kernel of $\chi$. 
In fact, 
Macdonald's identity holds for any binomial measure; equivalently, 
the expected formula 
$$\Rcc{\Conf_n(X)} = \binom{\Rcc{X}}{n} = \lambda_n \Rcc{X}$$ 
holds in $\RC{k}$. 
The next result generalizes this to a wider class of varieties, 
    namely generalized configuration spaces. 

\begin{theorem}\label{thm:charclassgenconfigsp}
Let $\vec{n} = (n_1,\ldots,n_d) \in \ZZgeq^d$ and let $X$ be a variety over $k$. 
Then 
\begin{equation}\label{eqn:charcycleofgenconf} 
    \Rcc{\Conf_{\vec{n}}(X)} = \binom{\Rcc{X}}{n_1,\ldots,n_d}
    = \frac{\Rcc{X}(\Rcc{X}-1) \cdots (\Rcc{X}-N+1)}{n_1!\cdots n_d!}
\end{equation}
where $N = \sum_i n_i$. 
\end{theorem}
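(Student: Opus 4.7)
The plan is to prove the identity by strong induction on $N = n_1 + \cdots + n_d$, splitting the inductive step into two cases according to how many coordinates of $\vec{n}$ are nonzero. The base cases $N = 0$ and $N = 1$ are immediate: $\Conf_{\vec{0}}(X)$ is a point with $\binom{\Rcc{X}}{\vec{0}} = 1$, and $\Conf_{\vec{e}_i}(X) \cong X$ with $\binom{\Rcc{X}}{\vec{e}_i} = \Rcc{X}$.

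In Case A, where $\vec{n}$ has exactly one nonzero coordinate (so that $\Conf_{\vec{n}}(X) \cong \Conf_N(X)$), I would use the classical multiplicity-type stratification
$$\SymProd{X}{N} = \bigsqcup_{\lambda \vdash N} \Conf_{\vec{a}(\lambda)}(X),$$
where $\vec{a}(\lambda)_k$ is the number of parts of $\lambda$ of size $k$, to obtain in $\RC{k}$:
$$\sum_{\lambda \vdash N}\Rcc{\Conf_{\vec{a}(\lambda)}(X)} = \Rcc{\SymProd{X}{N}} = h_N \circ \Rcc{X} = \sum_{\lambda \vdash N} m_\lambda \circ \Rcc{X} = \sum_{\lambda \vdash N} \binom{\Rcc{X}}{\vec{a}(\lambda)},$$
the last equality being Proposition~\ref{prop:plethysmsasbinomialcoefficients}. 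For every $\lambda \neq (1^N)$, the vector $\vec{a}(\lambda)$ has size $\ell(\lambda) < N$, so the induction hypothesis identifies the corresponding terms on both sides; cancelling them isolates the desired equality $\Rcc{\Conf_N(X)} = \binom{\Rcc{X}}{N}$ for the remaining term $\lambda = (1^N)$.

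In Case B, where $\vec{n}$ has at least two nonzero coordinates, after reordering I would write $\vec{n} = (\vec{w}, m)$ with $m \geq 1$ and $|\vec{w}| \geq 1$ and apply Lemma~\ref{lem:generalizedconfspacerecurrence}. Every class on the right-hand side corresponds to a vector of total size strictly less than $N$, so the induction hypothesis identifies each with its multinomial counterpart in $\Rcc{X}$. The identity then reduces to the polynomial identity
$$\binom{x}{\vec{v}} = \binom{x}{\vec{w}}\binom{x}{m} - \sum_{\substack{\vec{0}\, \leq\, \vec{u}\, \leq\, \vec{w}\\ 1\, \leq\, |\vec{u}|\, \leq\, m}} \binom{x}{(\vec{w}-\vec{u},\, m-|\vec{u}|,\, \vec{u})}$$
in $\ZZ[x]$, which I would establish combinatorially: for $x = r$ a positive integer, $\binom{r}{\vec{n}}$ counts ordered $d$-tuples $(S_1, \ldots, S_d)$ of pairwise disjoint $n_i$-subsets of $\{1,\ldots,r\}$, and classifying a pair consisting of such a tuple together with an extra $m$-subset $T$ by the intersection vector $u_i = |T \cap S_i|$ produces this recurrence (the $\vec{u} = \vec{0}$ term recovering $\binom{r}{\vec{v}}$ since zero entries are invisible to multinomial coefficients). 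Since this holds for infinitely many $r$, it is an identity in $\ZZ[x]$ and hence holds upon substituting $x = \Rcc{X}$.

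The principal obstacle is that Lemma~\ref{lem:generalizedconfspacerecurrence} applied to length-one vectors $(N)$ collapses to a trivial identity, so it does not suffice on its own to handle the $d = 1$ case of the theorem. This is what forces the parallel treatment in Case A using the symmetric product stratification; once this is in place, Cases A and B together cover all $\vec{n}$ with $|\vec{n}| = N$ and the strong induction proceeds without further issue.
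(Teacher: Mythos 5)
Your proof is correct, and it follows the same overall skeleton as the paper's — an induction on $N = |\vec{n}|$ whose inductive step relies on Lemma~\ref{lem:generalizedconfspacerecurrence}, with a separate argument to handle the degenerate case where the recurrence collapses. The two places where you diverge from the paper are instructive. First, for your Case~A (one nonzero coordinate) the paper instead notes that $\Conf_n(X) = \Zpf{n}{2}(X)$ and invokes Theorem~\ref{thm:Ednidentitymotivic}, which gives $\sum_m \Rcc{\Conf_m(X)}t^m = \Rcc{Z_X(t)}/\Rcc{Z_X(t^2)} = (1+t)^{\Rcc{X}}$ in one stroke, with no induction needed; your alternative — the multiplicity-type stratification of $\SymProd{X}{N}$ combined with $h_N = \sum_{\lambda\vdash N}m_\lambda$, Proposition~\ref{prop:plethysmsasbinomialcoefficients}, and cancellation using the inductive hypothesis — avoids the $\Zpf{d}{n}$ machinery entirely but trades a self-contained closed-form identity for a cancellation argument that has to be woven into the outer strong induction. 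Second, for the multinomial identity underpinning Case~B, the paper extracts it from the generating-function factorization $(1+t_1+\cdots+t_d+s+t_1s+\cdots+t_ds)^x = (1+t_1+\cdots+t_d)^x(1+s)^x$, whereas you count pairs of subset-tuples directly and extend from positive integers to $\ZZ[x]$; these are essentially the same calculation viewed from the two standard sides of a binomial-type identity, and your remark that the $\vec{u}=\vec{0}$ term recovers $\binom{r}{\vec{v}}$ because zero entries are invisible is exactly the point one must not gloss over. Both versions of each step are valid, so the proof stands.
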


\begin{proof} 
First suppose $d = 1$. 
Then 
by \cite[Example 4.1.1-(3)]{biludashowe} we have that 
\begin{equation}
    \sum_{m= 0}^\infty [\Conf_m(X)]t^m = \frac{Z_X(t)}{Z_X(t^2)}.
\end{equation}
    Let $\Rcc{Z_{X}(t)}$ denote the image of $Z_X(t)$ in 
    $\RC{k}[\![t]\!]$. 
Then 
\begin{equation}
    \Rcc{Z_{X}(t)} = \exp\left(\sum_{r= 1}^\infty\psi_r(\Rcc{X})\frac{t^r}{r}\right) = (1-t)^{-\Rcc{X}}
\end{equation}
so 
\begin{equation}
    \sum_{m= 0}^\infty
    \Rcc{\Conf_m(X)}t^m 
    = \left(\frac{1-t^2}{1-t}\right)^{\Rcc{X}} 
    = \sum_{m=0}^\infty\binom{\Rcc{X}}{m}t^m.
\end{equation}

For the general case 
we proceed by induction on $|\vec{n}|$. 
Suppose $d \geq 2$ and set $\vec{n}' = (n_1,\ldots,n_{d-1})$. 
Applying $\Rcc{-}$ to \eqref{eqn:generalizedconfspacerecurrence} 
and using the induction hypothesis\footnote
    {Note that $|\vec{n}'-\vec{u}| + (n_d-|\vec{u}|) + |\vec{u}| < |\vec{n}|$.} 
obtains 
\begin{multline}
    \Rcc{\Conf_{\vec{n}}(X)} 
    = \binom{\Rcc{X}}{n_1,\dots,n_{d-1}} \binom{\Rcc{X}}{n_d} \\ 
    -\sum_{\substack{\vec{u} \leq \vec{n}' \in \ZZgeq^{d-1}\\ 1 \leq |\vec{u}|\leq n_d }}
        \binom{\Rcc{X}}{n_1-u_1,\dots,n_{d-1}-u_{d-1},n_d-|\vec{u}|,u_1,\dots,u_{d-1}} . 
\end{multline}

For any binomial ring $R$ and element $x \in R$, the formal series 
\[B_{d}(x;t_1,\dots,t_d) \coloneqq (1+t_1+\cdots+t_d)^{x} 
    = \sum_{(n_1,\dots,n_d)\geq \vec{0}}
    \binom{x}{n_1,\dots,n_d}t_1^{n_1}\cdots t_d^{n_d} 
    \in R[\![t_1,\ldots,t_d]\!]\]
satisfies the identity from 
    Remark~\ref{rmk:compatibilityofrecurrencewithbinomialcoefficients}, 
    i.e.  
\begin{align}
    B_{2d+1}(x;t_1,\ldots,t_d,s,t_1s,\ldots,t_ds) 
    &= (1+t_1+\cdots+t_d + s + t_1s + \cdots + t_ds)^{x} \\
    &= (1+t_1+\cdots+t_d)^x(1+s)^x = B_d(x;t_1,\ldots,t_d)B_1(x;s).
\end{align}
Therefore the following identity holds (as polynomials in $x$), 
\begin{multline}
    \binom{x}{n_1,\dots,n_d} = 
\binom{x}{n_1,\dots,n_{d-1}} \binom{x}{n_d} \\
    -\sum_{\substack{\vec{u} \leq \vec{n}' \in \ZZgeq^{d-1}\\ 1 \leq |\vec{u}|\leq n_d }}
        \binom{x}{n_1-u_1,\dots,n_{d-1}-u_{d-1},n_d-|\vec{u}|,u_1,\dots,u_{d-1}},  
\end{multline}
    Taking $x = \Rcc{X}$ proves the theorem. 
\end{proof} 

\begin{corollary}\label{cor:arbitrarystratabinomial}
For any type $\tau\vDash d$ we have 
\begin{equation}
    \Rcc{\Uc_\tau} = 
    \prod_{p = 1}^d 
    \binom{\Rcc{\Uc_p}}{\tau[p^1],\ldots,\tau[p^d]}. 
\end{equation}
\end{corollary}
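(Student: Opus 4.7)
The plan is to derive this corollary as a direct consequence of the two main results already established in Sections~\ref{sec:genconfigspaces} and the current section. The structural identification is provided by Proposition~\ref{prop:strataasgenconfigspaces}, while the evaluation in the binomial quotient is handled by Theorem~\ref{thm:charclassgenconfigsp}.

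First I would invoke Proposition~\ref{prop:strataasgenconfigspaces} to rewrite the open stratum as a product of generalized configuration spaces,
\[
\Uc_\tau \;\cong\; \prod_{p=1}^d \Conf_{\vec{n}_p}(\Uc_p),
\]
where $\vec{n}_p = (\tau[p^1], \tau[p^2], \ldots, \tau[p^d])$ records how many parts of $\tau$ have degree $p$ and a given multiplicity. Since the class map $\Rcc{-} \colon \KV{k} \to \RC{k}$ is a ring homomorphism and motivic classes are multiplicative under products of varieties, this yields
\[
\Rcc{\Uc_\tau} \;=\; \prod_{p=1}^d \Rcc{\Conf_{\vec{n}_p}(\Uc_p)}.
\]

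Next I would apply Theorem~\ref{thm:charclassgenconfigsp} to each factor on the right-hand side. That theorem precisely identifies $\Rcc{\Conf_{\vec{n}}(X)}$ with the multinomial coefficient $\binom{\Rcc{X}}{n_1,\ldots,n_d}$ in the binomial ring $\RC{k}$, so substituting gives
\[
\Rcc{\Uc_\tau} \;=\; \prod_{p=1}^d \binom{\Rcc{\Uc_p}}{\tau[p^1], \ldots, \tau[p^d]},
\]
which is the desired identity.

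Because both ingredients have already been proved, there is no real obstacle here; the only thing to check carefully is the bookkeeping of indices, namely that the vector $\vec{n}_p$ extracted from $\tau$ via Proposition~\ref{prop:strataasgenconfigspaces} matches the bottom entries of the multinomial coefficient in the statement. This is immediate from the definition $\vec{n}_p = (\tau[p^1],\ldots,\tau[p^d])$, which is exactly how $\vec{n}_p$ was introduced in the discussion preceding Proposition~\ref{prop:strataasgenconfigspaces}.
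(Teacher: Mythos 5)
Your proof is correct and follows exactly the paper's approach: the paper's proof reads simply ``Combine the theorem with Proposition~\ref{prop:strataasgenconfigspaces},'' which is precisely the two-step argument you spell out (apply Proposition~\ref{prop:strataasgenconfigspaces} to decompose $\Uc_\tau$ into a product of generalized configuration spaces, then apply Theorem~\ref{thm:charclassgenconfigsp} factor by factor). Your extra care about the index bookkeeping $\vec{n}_p = (\tau[p^1],\ldots,\tau[p^d])$ is a welcome elaboration but does not change the route.
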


\begin{remark}[rationality of the motivic zeta function]\label{rmk:rationality of the motivic zeta function} 
The ring of characteristic cycles over $k$ is 
    a special $\lambda$-ring, and therefore receives 
    a homomorphism from the specialization 
    $\KV{k}_{\mathrm{sp}}$ 
    of the Grothendieck ring of varieties 
    (cf.~\cite[Lemma~4.12]{MR2098401}). 
Larsen and Lunts have asked whether 
    the motivic zeta function of a variety in 
    $\KV{k}_{\mathrm{sp}}[\![t]\!]$ is rational 
    \cite[Question~8.8]{MR2098401}. 
Is the image of the motivic zeta function of a variety 
    in $\RC{k}[\![t]\!]$ rational? 
\end{remark}

\begin{theorem}\label{thm:trivialadamsoperationsstrata}
For any type $\tau\vDash d$ we have
\begin{equation}
    \Rcc{\Uc_\tau} = M_\tau\circ\Rcc{\Uc}
    \quad\text{and}\quad
    \Rcc{\Sc_\tau} = H_\tau\circ\Rcc{\Uc}. 
\end{equation}
In particular, if $\mm$ is a motivic measure valued 
    in a binomial ring then for any type $\lambda$, 
\begin{equation} 
\mm(\Uc_\lambda) = 
\sum_{\tau \leq \lambda}
\arr^{-1}_{\tau \lambda}
\prod_{b^m \in \tau}
    \mm(\Sc_b) 
    \quad\text{and}\quad
\mm(\Sc_\lambda) = 
\sum_{\tau \leq \lambda}
\arr_{\tau \lambda}
\prod_{b^m \in \tau}
    \mm(\Uc_b) .
\end{equation} 
\end{theorem}


For a type $\tau$ 
let $\tau_p$ denote the multiset of $k$ 
satisfying $p^k \in \tau$, 
regarded as a partition. 
We write each partition $\tau_p$ as 
$(1^{{n}_1} \cdots d^{{n}_d})$ 
for non-negative integers 
$\vec{n}_p = (n_1,\ldots,n_d) \in \ZZgeq^d$ 
(so $\vec{n}_p = (\tau[p^1],\ldots,\tau[p^d])$). 

\begin{proof} 
Recall that 
    $M_\tau = m_{\tau_1}(x_{1\ast})\cdots m_{\tau_d}(x_{d\ast})$ 
    (Remark~\ref{rmk:Mtautosymmetricmlambda}). 
Let $\Rcc{\Uc}$ denote the element 
    $\Rcc{\Uc_1}t+\Rcc{\Uc_2}t^2+\cdots$ of $t\RC{k}[\![t]\!]$. 
By definition of higher plethysm, 
\begin{equation} 
    M_\tau \circ \Rcc{\Uc} 
    = \prod_{p=1}^d 
    m_{\tau_p} \circ \Rcc{\Uc_p}. 
\end{equation} 
On the other hand by Lemma~\ref{lemma:strataasgenconfigspaces}, 
    \[\Rcc{\Uc_\tau} 
    = \prod_{p= 1}^d\Rcc{\Conf_{\vec{n}_p}(\Uc_p)}. \]
We conclude by Theorem~\ref{thm:charclassgenconfigsp} 
    and Proposition~\ref{prop:plethysmsasbinomialcoefficients} 
    that $\Rcc{\Uc_\tau} = M_\tau\circ\Rcc{\Uc}$. 
Now observe that 
    $\Rcc{\Sc_\tau} = 
    \sum_{\lambda \leq \tau} \Rcc{\Uc_\lambda} 
    = \sum_{\lambda \leq \tau} (M_\lambda \circ \Rcc{\Uc}) 
    = (\sum_{\lambda \leq \tau} M_\lambda) \circ \Rcc{\Uc}
    = H_\tau \circ \Rcc{\Uc}$. 
\end{proof} 

\begin{lemma}\label{lem:classoflefschetzmotivesinbinomialquotient}
$\Rcc{\AA^1_k}$ is an idempotent in $\RC{k}$ 
    and it is not equal to $0$ or $1$. 
\end{lemma}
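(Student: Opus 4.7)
The plan is to establish the three assertions separately: idempotence, nonvanishing, and being distinct from $1$.

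For idempotence, I would first compute the $\lambda$-operations on $\LL$ directly from the definition $\sigma_t(\LL) = Z_{\AA^1_k}(t) = (1-\LL t)^{-1}$. Since $\lambda_t = \sigma_{-t}^{-1}$, this gives $\lambda_t(\LL) = 1 + \LL t$, and then the defining identity for Adams operations yields
\begin{equation*}
    \sum_{n \geq 1}\psi_n(\LL)t^n = -t\frac{d}{dt}\log(1-\LL t) = \sum_{n\geq 1}\LL^n t^n,
\end{equation*}
so $\psi_n(\LL) = \LL^n$ in $\KV{k}$. The quotient map $\KV{k} \to \RC{k}$ is a $\lambda$-homomorphism and $\RC{k}$ is binomial, so $\psi_n$ acts trivially on $\Rcc{\LL}$. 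Combining these two facts, $\Rcc{\LL}^n = \Rcc{\psi_n(\LL)} = \psi_n(\Rcc{\LL}) = \Rcc{\LL}$ for every $n \geq 1$, and in particular $\Rcc{\LL}^2 = \Rcc{\LL}$.

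To see $\Rcc{\LL} \neq 0$, I would exhibit a $\lambda$-homomorphism from $\RC{k}$ to a binomial ring sending $\Rcc{\LL}$ to a nonzero element. The compactly-supported Euler characteristic $\chi_c \colon \KV{k} \to \ZZ$ is a $\lambda$-homomorphism (it arises as the specialization $(u,v) \mapsto (1,1)$ of the $\lambda$-homomorphism $E$, which is a $\lambda$-ring map $\ZZ[u,v]\to\ZZ$ by Remark~\ref{rmk:specializationofEbinomialring}), and $\ZZ$ is binomial, so by the universal property of $R \to R_B$ the map $\chi_c$ factors uniquely through $\RC{k}$. Since $\chi_c(\LL) = 1$, this forces $\Rcc{\LL} \neq 0$.

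For $\Rcc{\LL} \neq 1$, I would use the motivic measure $\rho \colon \KV{k}\to\ZZ$ counting connected components on smooth proper varieties (Lemma~\ref{lem:ECZoverafinitefield}), which is a $\lambda$-homomorphism valued in a binomial ring and therefore factors through $\RC{k}$. From the decomposition $[\AA^1_k] = [\PP^1_k] - [\mathrm{pt}]$ into smooth proper geometrically connected classes we read off $\rho(\LL) = 1 - 1 = 0$, while $\rho(1) = 1$, so $\Rcc{\LL}$ and $1$ have different images in $\ZZ$ and must differ in $\RC{k}$. The main point to be careful about is the range of fields for which the measure $\rho$ has been constructed in the paper (namely $k \subset \CC$ or $k = \FF_q$); for other fields one may instead invoke a suitable specialization of $h \colon \KV{k} \to \KCM{k}$ in characteristic zero, or simply restrict the lemma's scope, but the strategy of exhibiting a binomial measure killing $\LL$ is the same.
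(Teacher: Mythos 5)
Your proof is correct and takes essentially the same approach as the paper: establish $\psi_n(\LL)=\LL^n$ (via $\mathrm{Sym}^n(\AA^1)=\AA^n$), deduce idempotence of $\Rcc{\LL}$ from triviality of Adams operations in the binomial quotient, and separate $\Rcc{\LL}$ from $0$ and $1$ by exhibiting two binomial measures with different values on $\LL$. The only cosmetic difference is that the paper uses $\chi_c^{\mathrm{hol}}$ (the $E(0,1)$ specialization) rather than $\rho$ (the $E(0,0)$ specialization) to show $\Rcc{\LL}\neq 1$; both kill $\LL$, and your observation about the field restriction ($k\subset\CC$ or $k=\FF_q$) applies equally to the paper's choice.
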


\begin{proof} 
    $\mathrm{Sym}^n(\mathbb A^1) = \mathbb A^n$ 
    by the fundamental theorem of symmetric functions. 
Thus 
    \[\sum_{r\geq 1}\psi_{r}([\AA^1_k])\frac{t^r}{r} = \log\sum_{n\geq 0}\sigma_n([\AA^1_k])t^n = \log\left(\frac{1}{1-[\AA^1_k ]t}\right)\]
    implies that $\psi_r([\AA^1_k]) = [\AA^1_k]^r \in \KV{k}$. 
    In the binomial quotient, 
$\Rcc{\AA^1_k}^2 = \psi_2(\Rcc{\AA^1_k}) = \Rcc{\AA^1_k}$. 
Moreover, one easily finds ring homomorphisms mapping $\Rcc{\AA^1_k}$ 
    to either $0$ or $1$. 
\end{proof} 


The next theorem contains the rest of 
    Theorem~\ref{thm:GeomIrred}. 

\begin{theorem}\label{thm:charcycleofirrd}
The binomial class $\Rcc{\Irr_\tau}$ is zero 
    unless $d^m \in \tau \implies d= 1$ in which case 
\begin{equation} 
    \Rcc{\Irr_\tau} = 
        \binom{\Rcc{\AA^1_k}(n-1) + 1}{\tau[1^1],\ldots,\tau[1^d]} .
\end{equation} 
\end{theorem}

\begin{proof} 
By Corollary \ref{cor:arbitrarystratabinomial} 
    it suffices to prove the case $\tau = d$. 
As $\Rcc{\AA^1_k}$ is an idempotent 
    (Lemma~\ref{lem:classoflefschetzmotivesinbinomialquotient}) 
    we see that 
    $\Rcc{\mathbb P^N_k} = \sum_{i=0}^N\Rcc{\AA^1_k}^N = (N+1)\Rcc{\AA^1_k} + (1-\Rcc{\AA^1_k})$. 
Therefore, by Proposition \ref{prop:zetafngradedspace} we have
\begin{align}
    \Rcc{Z_{\Irr_d}(t)} = \prod_{d=1}^\infty(1-t^d)^{-\Rcc{\Irr_d}} 
    &= \sum_{d= 0}^\infty\Rcc{\Sc_d}t^d\\
    &= \Rcc{\AA^1_k}\sum_{d= 0}^\infty\binom{n+d}{d}t^d + (1-\Rcc{\AA^1_k})\sum_{d= 0}^\infty t^d\\
    &= \frac{\Rcc{\AA^1_k}}{(1-t)^n} + \frac{1-\Rcc{\AA^1_k}}{1-t}.
\end{align}
Since $\Rcc{\AA^1_k}$ is an idempotent, we can prove an identity in $\RC{k}$ by verifying it after applying the two projections $q_1,q_0$ specializing $\Rcc{\AA^1_k} = 1$ or $\Rcc{\AA^1_k} = 0$. In the first case, we have
\[\prod_{d= 1}^\infty(1-t^d)^{-q_1(\Rcc{\Irr_d})} = (1-t)^{-n}\]
while in the second case, we have
\[\prod_{d= 1}^\infty(1-t^d)^{-q_0(\Rcc{\Irr_d})} = (1-t)^{-1}.\]
Comparing the exponents on both sides 
    and appealing to the uniqueness of such a factorization 
    obtains the claimed identity. 
\end{proof} 

\newpage
\appendix

\newgeometry{left=1.4cm,right=1.4cm}

\section{Arrangement numbers \texorpdfstring{$\arr_{\lambda\tau}$ in degrees $\leq 5$}{in low degrees}} 

\renewcommand{\vertrowspace}{.3em}
\renewcommand{\arraystretch}{1.15}

\vspace{-.2cm}
\begin{minipage}[t][8cm][t]{0.3\textwidth}	
    \vspace{.68cm}
\[ 
\hspace{10000pt minus 1fil}
\begin{array}{rccc}
& \rotatebox{90}{$ (1^{2}) $} & \rotatebox{90}{$ (1 1) $} & \rotatebox{90}{$ (2) $} \\[\vertrowspace] 
(1^{2}) & $1$ & $1$ & $1$ \\
(1 1) & $0$ & $2$ & $1$ \\
(2) & $0$ & $0$ & $1$ \\
\end{array}
\] 
    \vspace{.28cm}
\[ 
\hspace{10000pt minus 1fil}
\begin{array}{rccccc}
 & \rotatebox{90}{$ (1^{3}) $} & \rotatebox{90}{$ (1^{2} 1) $} & \rotatebox{90}{$ (1 1 1) $} & \rotatebox{90}{$ (2 1) $} & \rotatebox{90}{$ (3) $} \\[\vertrowspace]
(1^{3}) & $1$ & $1$ & $1$ & $1$ & $1$ \\
(1^{2} 1) & $0$ & $1$ & $3$ & $2$ & $1$ \\
(1 1 1) & $0$ & $0$ & $6$ & $3$ & $1$ \\
(2 1) & $0$ & $0$ & $0$ & $1$ & $1$ \\
(3) & $0$ & $0$ & $0$ & $0$ & $1$ \\
\end{array}
\] 
\end{minipage}%
\begin{minipage}[t][8cm][t]{0.6\textwidth}	
    \vspace{.65em}
\[ 
\begin{array}{rccccccccccc}
 & \rotatebox{90}{$ (1^{4}) $} & \rotatebox{90}{$ (1^{3} 1) $} & \rotatebox{90}{$ (1^{2} 1^{2}) $} & \rotatebox{90}{$ (1^{2} 1 1) $} & \rotatebox{90}{$ (1 1 1 1) $} & \rotatebox{90}{$ (2 1^{2}) $} & \rotatebox{90}{$ (2 1 1) $} & \rotatebox{90}{$ (3 1) $} & \rotatebox{90}{$ (2^{2}) $} & \rotatebox{90}{$ (2 2) $} & \rotatebox{90}{$ (4) $} \\[\vertrowspace]  
(1^{4}) & $1$ & $1$ & $1$ & $1$ & $1$ & $1$ & $1$ & $1$ & $1$ & $1$ & $1$ \\
(1^{3} 1) & $0$ & $1$ & $0$ & $2$ & $4$ & $1$ & $3$ & $2$ & $0$ & $2$ & $1$ \\
(1^{2} 1^{2}) & $0$ & $0$ & $2$ & $2$ & $6$ & $2$ & $4$ & $2$ & $1$ & $3$ & $1$ \\
(1^{2} 1 1) & $0$ & $0$ & $0$ & $2$ & $12$ & $1$ & $7$ & $3$ & $0$ & $4$ & $1$ \\
(1 1 1 1) & $0$ & $0$ & $0$ & $0$ & $24$ & $0$ & $12$ & $4$ & $0$ & $6$ & $1$ \\
(2 1^{2}) & $0$ & $0$ & $0$ & $0$ & $0$ & $1$ & $1$ & $1$ & $0$ & $2$ & $1$ \\
(2 1 1) & $0$ & $0$ & $0$ & $0$ & $0$ & $0$ & $2$ & $2$ & $0$ & $2$ & $1$ \\
(3 1) & $0$ & $0$ & $0$ & $0$ & $0$ & $0$ & $0$ & $1$ & $0$ & $0$ & $1$ \\
(2^{2}) & $0$ & $0$ & $0$ & $0$ & $0$ & $0$ & $0$ & $0$ & $1$ & $1$ & $1$ \\
(2 2) & $0$ & $0$ & $0$ & $0$ & $0$ & $0$ & $0$ & $0$ & $0$ & $2$ & $1$ \\
(4) & $0$ & $0$ & $0$ & $0$ & $0$ & $0$ & $0$ & $0$ & $0$ & $0$ & $1$ \\
\end{array}
    \] 
\end{minipage}

\vspace{-\parskip}
    \begin{minipage}[t][10cm][t]{0.9\textwidth}
        \centering
    \vspace{1.2cm}
\[ 
\begin{array}{rccccccccccccccccc}
 & \rotatebox{90}{$ (1^{5}) $} & \rotatebox{90}{$ (1^{3} 1^{2}) $} & \rotatebox{90}{$ (1^{4} 1) $} & \rotatebox{90}{$ (1^{3} 1 1) $} & \rotatebox{90}{$ (2 1^{3}) $} & \rotatebox{90}{$ (1^{2} 1^{2} 1) $} & \rotatebox{90}{$ (1^{2} 1 1 1) $} & \rotatebox{90}{$ (1 1 1 1 1) $} & \rotatebox{90}{$ (2 1^{2} 1) $} & \rotatebox{90}{$ (2 1 1 1) $} & \rotatebox{90}{$ (3 1^{2}) $} & \rotatebox{90}{$ (3 1 1) $} & \rotatebox{90}{$ (2^{2} 1) $} & \rotatebox{90}{$ (2 2 1) $} & \rotatebox{90}{$ (3 2) $} & \rotatebox{90}{$ (4 1) $} & \rotatebox{90}{$ (5) $} \\[\vertrowspace] 
(1^{5}) & $1$ & $1$ & $1$ & $1$ & $1$ & $1$ & $1$ & $1$ & $1$ & $1$ & $1$ & $1$ & $1$ & $1$ & $1$ & $1$ & $1$ \\
(1^{3} 1^{2}) & $0$ & $1$ & $0$ & $1$ & $1$ & $2$ & $4$ & $10$ & $3$ & $7$ & $2$ & $4$ & $1$ & $5$ & $3$ & $2$ & $1$ \\
(1^{4} 1) & $0$ & $0$ & $1$ & $2$ & $1$ & $1$ & $3$ & $5$ & $2$ & $4$ & $1$ & $3$ & $1$ & $3$ & $2$ & $2$ & $1$ \\
(1^{3} 1 1) & $0$ & $0$ & $0$ & $2$ & $1$ & $0$ & $6$ & $20$ & $3$ & $13$ & $1$ & $7$ & $0$ & $8$ & $4$ & $3$ & $1$ \\
(2 1^{3}) & $0$ & $0$ & $0$ & $0$ & $1$ & $0$ & $0$ & $0$ & $1$ & $1$ & $1$ & $1$ & $0$ & $2$ & $2$ & $1$ & $1$ \\
(1^{2} 1^{2} 1) & $0$ & $0$ & $0$ & $0$ & $0$ & $2$ & $6$ & $30$ & $4$ & $18$ & $2$ & $8$ & $1$ & $11$ & $5$ & $3$ & $1$ \\
(1^{2} 1 1 1) & $0$ & $0$ & $0$ & $0$ & $0$ & $0$ & $6$ & $60$ & $3$ & $33$ & $1$ & $13$ & $0$ & $18$ & $7$ & $4$ & $1$ \\
(1 1 1 1 1) & $0$ & $0$ & $0$ & $0$ & $0$ & $0$ & $0$ & $120$ & $0$ & $60$ & $0$ & $20$ & $0$ & $30$ & $10$ & $5$ & $1$ \\
(2 1^{2} 1) & $0$ & $0$ & $0$ & $0$ & $0$ & $0$ & $0$ & $0$ & $1$ & $3$ & $1$ & $3$ & $0$ & $4$ & $3$ & $2$ & $1$ \\
(2 1 1 1) & $0$ & $0$ & $0$ & $0$ & $0$ & $0$ & $0$ & $0$ & $0$ & $6$ & $0$ & $6$ & $0$ & $6$ & $4$ & $3$ & $1$ \\
(3 1^{2}) & $0$ & $0$ & $0$ & $0$ & $0$ & $0$ & $0$ & $0$ & $0$ & $0$ & $1$ & $1$ & $0$ & $0$ & $1$ & $1$ & $1$ \\
(3 1 1) & $0$ & $0$ & $0$ & $0$ & $0$ & $0$ & $0$ & $0$ & $0$ & $0$ & $0$ & $2$ & $0$ & $0$ & $1$ & $2$ & $1$ \\
(2^{2} 1) & $0$ & $0$ & $0$ & $0$ & $0$ & $0$ & $0$ & $0$ & $0$ & $0$ & $0$ & $0$ & $1$ & $1$ & $1$ & $1$ & $1$ \\
(2 2 1) & $0$ & $0$ & $0$ & $0$ & $0$ & $0$ & $0$ & $0$ & $0$ & $0$ & $0$ & $0$ & $0$ & $2$ & $2$ & $1$ & $1$ \\
(3 2) & $0$ & $0$ & $0$ & $0$ & $0$ & $0$ & $0$ & $0$ & $0$ & $0$ & $0$ & $0$ & $0$ & $0$ & $1$ & $0$ & $1$ \\
(4 1) & $0$ & $0$ & $0$ & $0$ & $0$ & $0$ & $0$ & $0$ & $0$ & $0$ & $0$ & $0$ & $0$ & $0$ & $0$ & $1$ & $1$ \\
(5) & $0$ & $0$ & $0$ & $0$ & $0$ & $0$ & $0$ & $0$ & $0$ & $0$ & $0$ & $0$ & $0$ & $0$ & $0$ & $0$ & $1$ \\
\end{array}
\] 
    \end{minipage}


\section{Inverse arrangement numbers \texorpdfstring{$\arr^{-1}_{\lambda\tau}$ in degrees $\leq 5$}{in low degrees}}\label{app:invarrnumbers} 

\vspace{-.2cm}
\begin{minipage}[t][8cm][t]{0.3\textwidth}	
    \vspace{.68cm}
    \[ 
\hspace{10000pt minus 1fil}
\begin{array}{rrrr}
& \rotatebox{90}{ $(1^{2})$ } & \rotatebox{90}{$ (1 1)$ } & \rotatebox{90}{$ (2)$ } \\[\vertrowspace] 
(1^{2}) & 1 & -\frac{1}{2} & -\frac{1}{2} \\
(1 1) & 0 & \frac{1}{2} & -\frac{1}{2} \\
(2) & 0 & 0 & 1 \\
\end{array}
    \] 
    \vspace{.28cm}
    \[ 
\hspace{10000pt minus 1fil}
\begin{array}{rrrrrr}
 & \rotatebox{90}{$ (1^{3}) $} & \rotatebox{90}{$ (1^{2} 1) $} & \rotatebox{90}{$ (1 1 1) $} & \rotatebox{90}{$ (2 1) $} & \rotatebox{90}{$ (3) $} \\[\vertrowspace] 
(1^{3}) & 1 & -1 & \frac{1}{3} & 0 & -\frac{1}{3} \\
(1^{2} 1) & 0 & 1 & -\frac{1}{2} & -\frac{1}{2} & 0 \\
(1 1 1) & 0 & 0 & \frac{1}{6} & -\frac{1}{2} & \frac{1}{3} \\
(2 1) & 0 & 0 & 0 & 1 & -1 \\
(3) & 0 & 0 & 0 & 0 & 1 \\
\end{array}
    \] 
\end{minipage}%
\begin{minipage}[t][8cm][t]{0.63\textwidth}	
    \vspace{1.23em}
    \[ 
\hspace{10000pt minus 1fil}
\begin{array}{rrrrrrrrrrrr}
 & \rotatebox{90}{$ (1^{4}) $} & \rotatebox{90}{$ (1^{3} 1) $} & \rotatebox{90}{$ (1^{2} 1^{2}) $} & \rotatebox{90}{$ (1^{2} 1 1) $} & \rotatebox{90}{$ (1 1 1 1) $} & \rotatebox{90}{$ (2 1^{2}) $} & \rotatebox{90}{$ (2 1 1) $} & \rotatebox{90}{$ (3 1) $} & \rotatebox{90}{$ (2^{2}) $} & \rotatebox{90}{$ (2 2) $} & \rotatebox{90}{$ (4) $} \\[\vertrowspace] 
(1^{4}) & 1 & -1 & -\frac{1}{2} & 1 & -\frac{1}{4} & 0 & 0 & 0 & -\frac{1}{2} & \frac{1}{4} & 0 \\
(1^{3} 1) & 0 & 1 & 0 & -1 & \frac{1}{3} & 0 & 0 & -\frac{1}{3} & 0 & 0 & 0 \\
(1^{2} 1^{2}) & 0 & 0 & \frac{1}{2} & -\frac{1}{2} & \frac{1}{8} & -\frac{1}{2} & \frac{1}{4} & 0 & -\frac{1}{2} & \frac{3}{8} & \frac{1}{4} \\
(1^{2} 1 1) & 0 & 0 & 0 & \frac{1}{2} & -\frac{1}{4} & -\frac{1}{2} & 0 & 0 & 0 & \frac{1}{4} & 0 \\
(1 1 1 1) & 0 & 0 & 0 & 0 & \frac{1}{24} & 0 & -\frac{1}{4} & \frac{1}{3} & 0 & \frac{1}{8} & -\frac{1}{4} \\
(2 1^{2}) & 0 & 0 & 0 & 0 & 0 & 1 & -\frac{1}{2} & 0 & 0 & -\frac{1}{2} & 0 \\
(2 1 1) & 0 & 0 & 0 & 0 & 0 & 0 & \frac{1}{2} & -1 & 0 & -\frac{1}{2} & 1 \\
(3 1) & 0 & 0 & 0 & 0 & 0 & 0 & 0 & 1 & 0 & 0 & -1 \\
(2^{2}) & 0 & 0 & 0 & 0 & 0 & 0 & 0 & 0 & 1 & -\frac{1}{2} & -\frac{1}{2} \\
(2 2) & 0 & 0 & 0 & 0 & 0 & 0 & 0 & 0 & 0 & \frac{1}{2} & -\frac{1}{2} \\
(4) & 0 & 0 & 0 & 0 & 0 & 0 & 0 & 0 & 0 & 0 & 1 \\
\end{array}
    \] 
\end{minipage}

\vspace{-\parskip}
    \begin{minipage}[t][10cm][t]{0.9\textwidth}
        \centering
    \vspace{1.5cm}
\[ 
\begin{array}{rrrrrrrrrrrrrrrrrr}
 & \rotatebox{90}{$ (1^{5}) $} & \rotatebox{90}{$ (1^{3} 1^{2}) $} & \rotatebox{90}{$ (1^{4} 1) $} & \rotatebox{90}{$ (1^{3} 1 1) $} & \rotatebox{90}{$ (2 1^{3}) $} & \rotatebox{90}{$ (1^{2} 1^{2} 1) $} & \rotatebox{90}{$ (1^{2} 1 1 1) $} & \rotatebox{90}{$ (1 1 1 1 1) $} & \rotatebox{90}{$ (2 1^{2} 1) $} & \rotatebox{90}{$ (2 1 1 1) $} & \rotatebox{90}{$ (3 1^{2}) $} & \rotatebox{90}{$ (3 1 1) $} & \rotatebox{90}{$ (2^{2} 1) $} & \rotatebox{90}{$ (2 2 1) $} & \rotatebox{90}{$ (3 2) $} & \rotatebox{90}{$ (4 1) $} & \rotatebox{90}{$ (5) $} \\[\vertrowspace] 
(1^{5}) & 1 & -1 & -1 & 1 & 0 & 1 & -1 & \frac{1}{5} & 0 & 0 & 0 & 0 & 0 & 0 & 0 & 0 & -\frac{1}{5} \\
(1^{3} 1^{2}) & 0 & 1 & 0 & -\frac{1}{2} & -\frac{1}{2} & -1 & \frac{5}{6} & -\frac{1}{6} & \frac{1}{2} & -\frac{1}{6} & -\frac{1}{3} & \frac{1}{6} & 0 & 0 & \frac{1}{6} & 0 & 0 \\
(1^{4} 1) & 0 & 0 & 1 & -1 & 0 & -\frac{1}{2} & 1 & -\frac{1}{4} & 0 & 0 & 0 & 0 & -\frac{1}{2} & \frac{1}{4} & 0 & 0 & 0 \\
(1^{3} 1 1) & 0 & 0 & 0 & \frac{1}{2} & -\frac{1}{2} & 0 & -\frac{1}{2} & \frac{1}{6} & \frac{1}{2} & -\frac{1}{6} & 0 & -\frac{1}{6} & 0 & 0 & \frac{1}{6} & 0 & 0 \\
(2 1^{3}) & 0 & 0 & 0 & 0 & 1 & 0 & 0 & 0 & -1 & \frac{1}{3} & 0 & 0 & 0 & 0 & -\frac{1}{3} & 0 & 0 \\
(1^{2} 1^{2} 1) & 0 & 0 & 0 & 0 & 0 & \frac{1}{2} & -\frac{1}{2} & \frac{1}{8} & -\frac{1}{2} & \frac{1}{4} & 0 & 0 & -\frac{1}{2} & \frac{3}{8} & 0 & \frac{1}{4} & 0 \\
(1^{2} 1 1 1) & 0 & 0 & 0 & 0 & 0 & 0 & \frac{1}{6} & -\frac{1}{12} & -\frac{1}{2} & \frac{1}{6} & \frac{1}{3} & -\frac{1}{6} & 0 & \frac{1}{4} & -\frac{1}{6} & 0 & 0 \\
(1 1 1 1 1) & 0 & 0 & 0 & 0 & 0 & 0 & 0 & \frac{1}{120} & 0 & -\frac{1}{12} & 0 & \frac{1}{6} & 0 & \frac{1}{8} & -\frac{1}{6} & -\frac{1}{4} & \frac{1}{5} \\
(2 1^{2} 1) & 0 & 0 & 0 & 0 & 0 & 0 & 0 & 0 & 1 & -\frac{1}{2} & -1 & \frac{1}{2} & 0 & -\frac{1}{2} & \frac{1}{2} & 0 & 0 \\
(2 1 1 1) & 0 & 0 & 0 & 0 & 0 & 0 & 0 & 0 & 0 & \frac{1}{6} & 0 & -\frac{1}{2} & 0 & -\frac{1}{2} & \frac{5}{6} & 1 & -1 \\
(3 1^{2}) & 0 & 0 & 0 & 0 & 0 & 0 & 0 & 0 & 0 & 0 & 1 & -\frac{1}{2} & 0 & 0 & -\frac{1}{2} & 0 & 0 \\
(3 1 1) & 0 & 0 & 0 & 0 & 0 & 0 & 0 & 0 & 0 & 0 & 0 & \frac{1}{2} & 0 & 0 & -\frac{1}{2} & -1 & 1 \\
(2^{2} 1) & 0 & 0 & 0 & 0 & 0 & 0 & 0 & 0 & 0 & 0 & 0 & 0 & 1 & -\frac{1}{2} & 0 & -\frac{1}{2} & 0 \\
(2 2 1) & 0 & 0 & 0 & 0 & 0 & 0 & 0 & 0 & 0 & 0 & 0 & 0 & 0 & \frac{1}{2} & -1 & -\frac{1}{2} & 1 \\
(3 2) & 0 & 0 & 0 & 0 & 0 & 0 & 0 & 0 & 0 & 0 & 0 & 0 & 0 & 0 & 1 & 0 & -1 \\
(4 1) & 0 & 0 & 0 & 0 & 0 & 0 & 0 & 0 & 0 & 0 & 0 & 0 & 0 & 0 & 0 & 1 & -1 \\
(5) & 0 & 0 & 0 & 0 & 0 & 0 & 0 & 0 & 0 & 0 & 0 & 0 & 0 & 0 & 0 & 0 & 1 \\
\end{array}
\] 
    \end{minipage}

\newpage

\section{M\"obius function of types in \texorpdfstring{degrees $\leq 5$}{low degrees}} 

\vspace{-.2cm}
\begin{minipage}[t][8cm][t]{0.3\textwidth}	
    \vspace{.68cm}
    \[ 
\hspace{10000pt minus 1fil}
\begin{array}{rrrr}
& \rotatebox{90}{ $(1^{2})$ } & \rotatebox{90}{$ (1 1)$ } & \rotatebox{90}{$ (2)$ } \\[\vertrowspace] 
(1^{2}) & 1 & -1 & 0 \\
(11) & 0 & 1 & -1 \\
(2) & 0 & 0 & 1 \\
\end{array}
    \] 
    \vspace{.28cm}
    \[ 
\hspace{10000pt minus 1fil}
\begin{array}{rrrrrr}
 & \rotatebox{90}{$ (1^{3}) $} & \rotatebox{90}{$ (1^{2} 1) $} & \rotatebox{90}{$ (1 1 1) $} & \rotatebox{90}{$ (2 1) $} & \rotatebox{90}{$ (3) $} \\[\vertrowspace] 
(1^{3}) & 1 & -1 & 0 & 0 & 0 \\
(1^{2}1) & 0 & 1 & -1 & 0 & 0 \\
(111) & 0 & 0 & 1 & -1 & 0 \\
(21) & 0 & 0 & 0 & 1 & -1 \\
(3) & 0 & 0 & 0 & 0 & 1 \\
\end{array}
    \] 
\end{minipage}%
\begin{minipage}[t][8cm][t]{0.63\textwidth}	
    \vspace{1.23em}
    \[ 
\hspace{10000pt minus 1fil}
\begin{array}{rrrrrrrrrrrr}
 & \rotatebox{90}{$ (1^{4}) $} & \rotatebox{90}{$ (1^{3} 1) $} & \rotatebox{90}{$ (1^{2} 1^{2}) $} & \rotatebox{90}{$ (1^{2} 1 1) $} & \rotatebox{90}{$ (1 1 1 1) $} & \rotatebox{90}{$ (2 1^{2}) $} & \rotatebox{90}{$ (2 1 1) $} & \rotatebox{90}{$ (3 1) $} & \rotatebox{90}{$ (2^{2}) $} & \rotatebox{90}{$ (2 2) $} & \rotatebox{90}{$ (4) $} \\[\vertrowspace] 
(1^{4}) & 1 & -1 & -1 & 1 & 0 & 0 & 0 & 0 & 0 & 0 & 0 \\
(1^{3}1) & 0 & 1 & 0 & -1 & 0 & 0 & 0 & 0 & 0 & 0 & 0 \\
(1^{2}1^{2}) & 0 & 0 & 1 & -1 & 0 & 0 & 0 & 0 & -1 & 1 & 0 \\
(1^{2}11) & 0 & 0 & 0 & 1 & -1 & -1 & 1 & 0 & 0 & 0 & 0 \\
(1111) & 0 & 0 & 0 & 0 & 1 & 0 & -1 & 0 & 0 & 0 & 0 \\
(21^{2}) & 0 & 0 & 0 & 0 & 0 & 1 & -1 & 0 & 0 & 0 & 0 \\
(211) & 0 & 0 & 0 & 0 & 0 & 0 & 1 & -1 & 0 & -1 & 1 \\
(31) & 0 & 0 & 0 & 0 & 0 & 0 & 0 & 1 & 0 & 0 & -1 \\
(2^{2}) & 0 & 0 & 0 & 0 & 0 & 0 & 0 & 0 & 1 & -1 & 0 \\
(22) & 0 & 0 & 0 & 0 & 0 & 0 & 0 & 0 & 0 & 1 & -1 \\
(4) & 0 & 0 & 0 & 0 & 0 & 0 & 0 & 0 & 0 & 0 & 1 \\
\end{array}
    \] 
\end{minipage}

\vspace{-\parskip}
    \begin{minipage}[t][10cm][t]{0.9\textwidth}
        \centering
    \vspace{1.5cm}
\[ 
\begin{array}{rrrrrrrrrrrrrrrrrr}
 & \rotatebox{90}{$ (1^{5}) $} & \rotatebox{90}{$ (1^{3} 1^{2}) $} & \rotatebox{90}{$ (1^{4} 1) $} & \rotatebox{90}{$ (1^{3} 1 1) $} & \rotatebox{90}{$ (2 1^{3}) $} & \rotatebox{90}{$ (1^{2} 1^{2} 1) $} & \rotatebox{90}{$ (1^{2} 1 1 1) $} & \rotatebox{90}{$ (1 1 1 1 1) $} & \rotatebox{90}{$ (2 1^{2} 1) $} & \rotatebox{90}{$ (2 1 1 1) $} & \rotatebox{90}{$ (3 1^{2}) $} & \rotatebox{90}{$ (3 1 1) $} & \rotatebox{90}{$ (2^{2} 1) $} & \rotatebox{90}{$ (2 2 1) $} & \rotatebox{90}{$ (3 2) $} & \rotatebox{90}{$ (4 1) $} & \rotatebox{90}{$ (5) $} \\[\vertrowspace] 
(1^{5}) & 1 & -1 & -1 & 1 & 0 & 1 & -1 & 0 & 0 & 0 & 0 & 0 & 0 & 0 & 0 & 0 & 0 \\
(1^{3}1^{2}) & 0 & 1 & 0 & -1 & 0 & -1 & 1 & 0 & 0 & 0 & 0 & 0 & 0 & 0 & 0 & 0 & 0 \\
(1^{4}1) & 0 & 0 & 1 & -1 & 0 & -1 & 1 & 0 & 0 & 0 & 0 & 0 & 0 & 0 & 0 & 0 & 0 \\
(1^{3}11) & 0 & 0 & 0 & 1 & -1 & 0 & -1 & 0 & 1 & 0 & 0 & 0 & 0 & 0 & 0 & 0 & 0 \\
(21^{3}) & 0 & 0 & 0 & 0 & 1 & 0 & 0 & 0 & -1 & 0 & 0 & 0 & 0 & 0 & 0 & 0 & 0 \\
(1^{2}1^{2}1) & 0 & 0 & 0 & 0 & 0 & 1 & -1 & 0 & 0 & 0 & 0 & 0 & -1 & 1 & 0 & 0 & 0 \\
(1^{2}111) & 0 & 0 & 0 & 0 & 0 & 0 & 1 & -1 & -1 & 1 & 0 & 0 & 0 & 0 & 0 & 0 & 0 \\
(11111) & 0 & 0 & 0 & 0 & 0 & 0 & 0 & 1 & 0 & -1 & 0 & 0 & 0 & 0 & 0 & 0 & 0 \\
(21^{2}1) & 0 & 0 & 0 & 0 & 0 & 0 & 0 & 0 & 1 & -1 & -1 & 1 & 0 & 0 & 0 & 0 & 0 \\
(2111) & 0 & 0 & 0 & 0 & 0 & 0 & 0 & 0 & 0 & 1 & 0 & -1 & 0 & -1 & 1 & 1 & -1 \\
(31^{2}) & 0 & 0 & 0 & 0 & 0 & 0 & 0 & 0 & 0 & 0 & 1 & -1 & 0 & 0 & 0 & 0 & 0 \\
(311) & 0 & 0 & 0 & 0 & 0 & 0 & 0 & 0 & 0 & 0 & 0 & 1 & 0 & 0 & -1 & -1 & 1 \\
(2^{2}1) & 0 & 0 & 0 & 0 & 0 & 0 & 0 & 0 & 0 & 0 & 0 & 0 & 1 & -1 & 0 & 0 & 0 \\
(221) & 0 & 0 & 0 & 0 & 0 & 0 & 0 & 0 & 0 & 0 & 0 & 0 & 0 & 1 & -1 & -1 & 1 \\
(32) & 0 & 0 & 0 & 0 & 0 & 0 & 0 & 0 & 0 & 0 & 0 & 0 & 0 & 0 & 1 & 0 & -1 \\
(41) & 0 & 0 & 0 & 0 & 0 & 0 & 0 & 0 & 0 & 0 & 0 & 0 & 0 & 0 & 0 & 1 & -1 \\
(5) & 0 & 0 & 0 & 0 & 0 & 0 & 0 & 0 & 0 & 0 & 0 & 0 & 0 & 0 & 0 & 0 & 1 \\
\end{array}
\] 
    \end{minipage}


\restoregeometry

\bibliography{ipf}
\bibliographystyle{abbrv}

\end{document}